\newtheorem{theorem}{Theorem}[section]
\newtheorem{lemma}[theorem]{Lemma}
\newtheorem{corollary}[theorem]{Corollary}
\newtheorem{proposition}[theorem]{Proposition}
\theoremstyle{definition}
\newtheorem{definition}[theorem]{Definition}
\newtheorem{example}[theorem]{Example}
\theoremstyle{remark}
\numberwithin{equation}{section}
\newcommand{\oo}{\infty}
\DeclareMathOperator*{\sgn}{sgn}
\DeclareMathOperator{\Int}{Int}
\begin{document}
\title[Core decreasing functions]{Core decreasing functions}
\author{Alejandro Santacruz Hidalgo}
\address{Department of Mathematics, Western University, London, Canada}
\email{asantacr@uwo.ca}

\author{Gord Sinnamon}
\address{Department of Mathematics, Western University, London, Canada}
\email{sinnamon@uwo.ca}
\thanks{The second author was supported by the Natural Sciences and Engineering Research Council of Canada}

\keywords{Ordered core, core decreasing function, Calder\'on couple, interpolation space, level function, down space}
\subjclass[2020]{Primary 46B70; Secondary 46E30, 46B42}

\begin{abstract}  Given a measure space and a totally ordered collection of measurable sets, called an ordered core, the notion of a core decreasing function is introduced and used to define the down space of a Banach function space. This is done using a variant of the K\"othe dual restricted to core decreasing functions. To study down spaces, the least core decreasing majorant construction and the level function construction, already known for functions on the real line, are extended to this general setting. These are used to give concrete descriptions of the duals of the down spaces and, in the case of universally rearrangement invariant (u.r.i.) spaces, of the down spaces themselves.

The down spaces of $L^1$ and $L^\oo$ are shown to form an exact Calder\'on couple with divisibility constant $1$; a complete description of the exact interpolation spaces for the couple is given in terms of level functions; and the down spaces of u.r.i. spaces are shown to be precisely those interpolation spaces that have the Fatou property. The dual couple is also an exact Calder\'on couple with divisibility constant $1$; a complete description of the exact interpolation spaces for the couple is given in terms of least core decreasing majorants; and the duals of down spaces of u.r.i. spaces are shown to be precisely those interpolation spaces that have the Fatou property. 
\end{abstract}
\maketitle

\section{Introduction}\label{intro} Monotone functions on $\mathbb R$ are very well behaved compared to general measurable functions. Consequently, a wide variety of techniques and applications are in place for working with them. Our object is to investigate certain functional analysis tools involving monotone functions, making them available for functions on general measure spaces in which a highly customizable notion of order is used to determine monotonicity. 

Let $1\le p\le\oo$ and $1/p+1/p'=1$. If $f$ is a Lebesgue measurable function on $[0,\oo)$, there exists a nonnegative, nonincreasing function $f^o$, called the \emph{level function} of $f$, such that \[
\int_0^\oo|f|g\le\int_0^\oo f^og
\]
holds for all nonnegative, nonincreasing $g$, and
\[
\|f^o\|_{L^p}=\sup\bigg\{\int_0^\oo|f|g:\|g\|_{L^{p'}}\le1,g\ge0,g\text{ nonincreasing}\bigg\}.
\]
Here $\|\cdot\|_{L^p}$ denotes the usual $L^p$ norm. This improves H\"older's inequality in the presence of monotonicity since we have $\int_0^\oo|f|g\le\|f^o\|_{L^p}\|g\|_{L^{p'}}$ whenever $g$ is nonnegative and nonincreasing. To take advantage of this improvement, we need to understand $f^o$: For a locally integrable $f$, the function $f^o$ is determined by requiring that the function $x\mapsto\int_0^xf^o$ is the least concave majorant of $x\mapsto\int_0^x|f|$. In particular, $f^o$ is independent of $p$. The construction of $f^o$ is due to Halperin, in \cite{H}, with an alternate proof given by Lorentz, in \cite{L}, and their construction applies to weighted Lebesgue measure as well as to the unweighted case outlined above.

The construction of $f^o$ has been extended to functions on $(\mathbb R,\lambda)$, for a general Borel measure $\lambda$, and to function spaces more general than the $L^p$ spaces. It has been applied to give formulas for the dual spaces of Lorentz and Orlicz-Lorentz spaces, to prove weighted Hardy and Fourier inequalities, to transfer monotonicity (from kernel to weight) in weighted norm inequalities for general positive integral operators, and to provide equivalent norms for traditional and abstract Ces\'aro spaces that facilitate interpolation of these spaces and of their duals. Another construction determines the dual spaces of spaces defined by the level function, and strong interpolation results have been established for both scales of spaces. See, for example, \cites{HKM,KR,Le,LM,MS1,MS2,RS,Sil,Slr,Stm,Sft,Snl,Sp}. Additional references may be found in \cite{FLM}.

These powerful tools are currently available only for functions defined on $(\mathbb R,\lambda)$, where the natural order on $\mathbb R$ determines the collection of nonnegative, nonincreasing functions. (Requiring $\lambda$ to be Borel ensures that such functions are measurable.) Recently, in \cite{Snf}, the notion of a measure space with an \emph{ordered core} was introduced to study abstract Hardy operators. Here we use an ordered core to define a collection of nonnegative, nonincreasing functions (called core decreasing functions) on a general measure space and investigate the level function construction, the function spaces it generates, their dual spaces, and the interpolation properties of both. The ordered core can be chosen to suit the investigation, and all these tools will be adapted to that choice.

We will end this introduction by defining, below, the principal objects of study and mentioning a few easy-to-state results of our investigation. Section \ref{background} sets out some necessary background and relevant known results. In Section \ref{Core Maps}, one of the main tools used to study ordered cores in \cite{Snf} is extended and adapted to our purposes. Much of the measure theory needed in the paper is contained in Section \ref{Core Functions}, where we show how to enrich an ordered core without changing its essential order properties. Examples to illustrate the flexibility of this notion of order are included in Section \ref{Examples}, along with pair of key examples that exhibit behaviour quite unlike the usual situation in which the order is carried on elements of the domain space instead of subsets of it. Section \ref{tailor} shows how to make a functional connection between core decreasing functions and nonnegative, nonincreasing functions on the half line. In Section \ref{decreasing} we define the level function and least decreasing majorant constructions in this general setting and the spaces of functions connected with them. The interpolation theory of these spaces is developed in Section \ref{interp}.

If $(U,\Sigma, \mu)$ is a $\sigma$-finite measure space and $\mathcal A\subseteq\Sigma$ we let $\sigma(\mathcal A)$ be the $\sigma$-ring generated by $\mathcal A$ and let $L(\mathcal A)$ be the collection of all $[-\oo,\oo]$-valued $\sigma(\mathcal A)$-measurable functions on $U$. In particular, $L(\Sigma)$ is the collection of $\Sigma$-measurable functions. The collection of nonnegative functions in $L(\mathcal A)$ is denoted $L^+(\mathcal A)$.
\begin{definition}  Let $(U,\Sigma, \mu)$ be a $\sigma$-finite measure space. 
\begin{enumerate}[label=(\alph*)]
\item An \emph{ordered core} of $(U,\Sigma, \mu)$ is a totally ordered subset $\mathcal A$ of $\Sigma$, containing the empty set, that consists of sets of finite $\mu$-measure. 
\item We say an ordered core $\mathcal A$ is \emph{$\sigma$-bounded} if there exists a countable subset $\mathcal A_0$ of $\mathcal A$ such that $\cup \mathcal A=\cup\mathcal A_0$ and we say $\mathcal A$ is \emph{full} if it is $\sigma$-bounded and $\cup \mathcal A=U$. (If $\mathcal A$ is $\sigma$-bounded, then  $\sigma(\mathcal A)$ is a $\sigma$-algebra over $\cup \mathcal A$.)
\item The relation $\le_{\mathcal A}$ on $U$ is defined by $u\le_{\mathcal A} v$ if for all $A\in\mathcal A$, $v\in A$ implies $u\in A$. We will drop the subscript $\mathcal A$ when there is no ambiguity.
\item A function $f:U\to[-\oo,\oo]$ is called \emph{decreasing} (relative to $\mathcal A$) for all $u,v\in U$,  $u\le_{\mathcal A}v$ implies $f(u)\ge f(v)$. A nonnegative, $\sigma(\mathcal A)$-measurable, decreasing function is called \emph{core decreasing}. The collection of core decreasing functions is denoted $L^\downarrow(\mathcal A)$.  
\end{enumerate}
\end{definition}

Observe that the relation ``$\le_{\mathcal A}$'' is reflexive and transitive but not antisymmetric in general. It is total, in the sense that for all $u,v\in U$, $u\le v$ or $v\le u$ or both.

See Section \ref{Examples} for examples of ordered cores and their order relations. As we will see in Example \ref{ordinal}, decreasing functions need not be $\mu$-measurable and even if they are $\mu$-measurable, they may not be $\sigma(\mathcal A)$-measurable. This is why we explicitly require $\sigma(\mathcal A)$-measurability in the collection $L^\downarrow(\mathcal A)$ of core decreasing functions.

\begin{enumerate}[label=, left=0pt]
\item
\item \textbf{Teasers:} Here is a look ahead at some consequences of the theory to be developed: Let $\mathcal A$ be a full ordered core on $(U,\Sigma, \mu)$. 
\item

\item\textbf{Level Functions:} For each $\mu$-measurable function $f$, there is a core decreasing function $f^o$ such that for all core decreasing functions $g$,
\[
\int_U f^og\,d\mu=\sup\bigg\{\int_U|f|h\,d\mu:h\in L^\downarrow(\mathcal A), \int_Ah\,d\mu\le\int_Ag\,d\mu\text{ for all } A\in\mathcal A\bigg\}.
\]
\item\textbf{D-type H\"older Inequalities:} If $1\le p\le\oo$, then $\|f^o\|_{L^p_\mu}\le\|f\|_{L^p_\mu}$ and for all core decreasing functions $g$,
\[
\int_U|f|g\,d\mu\le\|f^o\|_{L^p_\mu}\|g\|_{L^{p'}_\mu}.
\]
\item\textbf{Calder\'on Couples:} Let $\|f\|_{D^\oo_\mu}=\sup_{A\in\mathcal A}\frac1{\mu(A)}\int_A|f|\,d\mu$ and $D^\oo_\mu$ be the set of $f$ for which it is finite. Then $D^\oo_\mu$ is a Banach space, $(L^1_\mu,D^\oo_\mu)$ is a Calder\'on couple, and $Y$ is an exact interpolation space for the couple if and only if there is a universally rearrangement invariant space $X$ such that $\|f\|_Y=\|f^o\|_X$, for all $f$.
\end{enumerate}

\section{Notation and Background}\label{background}

We use $0\le \alpha_n\uparrow\alpha$ to indicate the limit of a nondecreasing sequence in $[0,\oo]$. Expressions that evaluate to $0/0$ will be taken to be $0$. The $\sgn$ function is defined on $\mathbb R$ by $\sgn(y)=|y|/y$ when $y\ne0$ and $\sgn(0)=0$. If $1\le p\le\oo$, and $\mu$ is a measure,  $L^p_\mu$ denotes the usual Lebesgue space of $\mu$-measurable functions.

\subsection{Universally rearrangement invariant spaces}\label{uri}
Let $(U,\Sigma,\mu)$ be a $\sigma$-finite measure space. The \emph{distribution function}, $\mu_f$, and \emph{rearrangement}, $f^*$, of $f\in L(\Sigma)$ are given by
\[
\mu_f(\delta)=\mu(\{u\in U:|f(u)|>\delta\})\quad\text{and}\quad
f^*(t)=\inf\{\delta>0:\mu_f(\delta)\le t\}
\]
for $\delta,t>0$. These take values in $[0,\oo]$.

Following \cite{Z}, a \emph{Banach function space} over $U$ is a real Banach space $X\subseteq L(\Sigma)$ that satisfies the following lattice property: If $f\in L(\Sigma)$, $g\in X$, and $|f|\le g$ $\mu$-a.e., then $f\in X$ and $\|f\|_X=\||f|\|_X\le\|g\|_X$. The space $X$ is \emph{saturated} if every $E\in\Sigma$ of positive measure has a subset $F\in\Sigma$ of positive measure such that $\chi_F\in X$. We say $X$ has the \emph{Fatou property} if whenever $0\le f_n\in X$ for each $n$, $f_n\uparrow f$ $\mu$-a.e., and $\lim_{n\to\oo}\|f_n\|_X<\oo$, it follows that $f\in X$ and $\|f_n\|_X\uparrow\|f\|_X$.

The \emph{associate space} $X'$ of $X$ is the Banach space of all $g\in L(\Sigma)$ such that 
\[
\|g\|_{X'}=\sup_{\|f\|_X\le1}\int|fg|\,d\mu<\oo.
\]
A \emph{universally rearrangement invariant} (u.r.i.) space over $U$ is a Banach function space over $U$ such that if $f\in L(\Sigma)$, $g\in X$ and $\int_0^tf^*\le\int_0^tg^*$ for all $t>0$, then $f\in X$ and $\|f\|_X\le\|g\|_X$. A nontrivial u.r.i. space is automatically saturated.

If $X$ is a saturated Banach function space over $U$, then $X'$ is a saturated Banach function space over $U$ with the Fatou property. If $X$ has the Fatou property, then $X''=X$ with identical norms. If $X$ is u.r.i., so is $X'$. See \cite{BS}, especially pages 64 and 90, for the close connection between u.r.i. spaces and rearrangement invariant spaces, but keep in mind that function spaces there are assumed to have additional properties, including the Fatou property. 

\subsection{Calder\'on couples}\label{oldCC}

A pair $(X_0,X_1)$ of Banach spaces is a \emph{compatible couple} if both spaces can be continuously embedded in a single Hausdorff topological vector space, thus making sense of the expressions $X_0\cap X_1$ and $X_0+X_1$. An \emph{admissible contraction} from $(X_0,X_1)$ to another compatible couple $(Y_0,Y_1)$, is a linear map $W$ on $X_0+X_1$ such that the restriction of $W$ to $X_j$ is a bounded map from $X_j$ to $Y_j$, of norm at most $1$, for $j=0,1$. A Banach space $X$ satisfying $X_0\cap X_1\subseteq X\subseteq X_0+X_1$ is called an \emph{exact interpolation space} for the couple $(X_0,X_1)$ if every admissible contraction from $(X_0,X_1)$ to itself maps each element of $X$ into $X$ with no increase in norm. 

The \emph{$K$-functional} for a compatible couple $(X_0,X_1)$ is given by
\[
K(f,t;X_0,X_1)=\inf(\|f_0\|_{X_0}+t\|f_1\|_{X_1}:f_0+f_1=f\},
\]
for $f\in X_0+X_1$ and $t>0$. We say $(X_0,X_1)$ is an \emph{exact Calder\'on couple} (or exact Calder\'on-Mityagin couple) if whenever $f,g\in X_0+X_1$ satisfy $K(f,t;X_0,X_1)\le K(g,t;X_0,X_1)$ for all $t>0$ there exists an admissible contraction from $(X_0,X_1)$ to itself that maps $g$ to $f$. Also, $(X_0,X_1)$ has \emph{divisibility constant $1$} if for all $f\in X_0+X_1$ and all nonnegative, concave functions $\omega_j$ on $[0,\oo)$, the conditions $\sum_{j=1}^\oo\omega_j(1)<\oo$ and $K(f,t;X_0,X_1)\le\sum_{j=1}^\oo\omega_j(t)$ for all $t\ge0$ imply that there exist $f_j\in X_0+X_1$ such that $K(f_j,t;X_0,X_1)\le\omega_j(t)$, for all $j$ and $t$, and $\sum_{j=1}^\oo f_j$ converges to $f$ in $X_0+X_1$. Here, $\|f\|_{X_0+X_1}=K(f,1;X_0,X_1)$. See \cite{BS} for the above definitions.

A Banach function space $\Phi$ of functions on the half line with measure $dt/t$ is a \emph{parameter of the $K$-method} if it contains the function $t\mapsto \min(1,t)$. If $(X_0,X_1)$ is a compatible couple, then $(X_0,X_1)_\Phi$ is the space of all $f\in X_0+X_1$ whose norm, $\|K(f,\cdot;X_0,X_1)\|_\Phi$, is finite. It is an exact interpolation space for $(X_0,X_1)$. If $(X_0,X_1)$ is an exact Calder\'on couple with divisibility constant $1$, then every exact interpolation space for $(X_0,X_1)$ is equal, with identical norms, to $(X_0,X_1)_\Phi$ for some parameter $\Phi$. See \cite{BK}*{Theorems 3.3.1 and 4.4.5 and Remark 4.4.4}.

For any $\sigma$-finite measure space $(P,\mathcal P,\rho)$, $K(f,t;L^1_\rho,L^\oo_\rho)=\int_0^tf^*$, $(L^1_\rho,L^\oo_\rho)$ is a Calder\'on couple with divisibility constant $1$, and the exact interpolation spaces of $(L^1_\rho,L^\oo_\rho)$ are exactly the u.r.i. spaces over $P$. See \cite{Ca}.

We will need a couple of standard (easy) results: If $W:(X_0,X_1)\to(Y_0,Y_1)$ is an admissible contraction then
\begin{equation}\label{T}
K(Wf,t;Y_0,Y_1)\le K(f,t;X_0,X_1).
\end{equation}
If $(X_0,X_1)$ is a compatible couple of Banach function spaces (over the same measure space,) then 
\begin{equation}\label{Kpos}
K(f,t;X_0,X_1)=\inf\{\|f_0\|_{X_0}+t\|f_1\|_{X_1}:f_0+f_1=|f|, 0\le f_0, 0\le f_1\}.
\end{equation}

\subsection{Spaces defined by nonincreasing functions}\label{old} Suppose $\lambda$ is a $\sigma$-finite, Borel measure on $[0,\oo)$ satisfying $\lambda([0,x])<\oo$ for each $x\in\mathbb R$. 
If $X$ is a u.r.i. space over $([0,\oo),\lambda)$, and $L^\downarrow$ denotes the collection of nonnegative, nonincreasing functions, let $X\!\!\downarrow$ be the space of $\lambda$-measurable $\varphi$ such that
\[
\|\varphi\|_{X\!\downarrow}=\sup\bigg\{\int_{[0,\oo)}|\varphi|\psi\,d\lambda: \psi\in L^\downarrow, \|\psi\|_{X'}\le1\bigg\}
\]
is finite. Then, for each $\lambda$-measurable $\varphi$ there exists a $\varphi^o\in L^\downarrow$, called the \emph{level function} of $\varphi$,  such that for all $\xi\in L^\downarrow$, 
\[
\int_{[0,\oo)}\varphi^o\xi\,d\lambda=\sup\bigg\{\int_{[0,\oo)}|\varphi|\psi\,d\lambda:\psi\in L^\downarrow, \int_{[0,x]}\psi\,d\lambda\le\int_{[0,x]}\xi\,d\lambda\text{ for }x\ge0\bigg\}
\]
and if $0\le \varphi_n\uparrow \varphi$ $\lambda$-a.e., then $\varphi_n^o\uparrow \varphi^o$ $\lambda$-a.e. Also, $L^1_\lambda\!\!\downarrow=L^1_\lambda$, with identical norms; $\|\varphi\|_{L_\lambda^\oo\!\downarrow}=\sup_{x\ge0}\frac1{\lambda([0,x])}\int_{[0,x]}|\varphi|\,d\lambda$;
$K(\varphi,t;L^1_\lambda,L^\oo_\lambda\!\!\downarrow)=\int_0^t(\varphi^o)^*$; and $(L^1_\lambda,L^\oo_\lambda\!\!\downarrow)$ is an exact Calder\'on couple with divisibility constant $1$.

For a $\lambda$-measurable $\psi$, define $\widetilde \psi(x)$ to be the essential supremum of $\psi$ on the interval $[x,\oo)$. Called the \emph{least decreasing majorant} of $\psi$, $\widetilde \psi$ is in $L^\downarrow$; if $\xi\in L^\downarrow$ and $\psi\le\xi$, then $\widetilde \psi\le \xi$; and
if $\psi_n\uparrow \psi$ $\lambda$-a.e. then $\widetilde {\psi_n}\uparrow\widetilde \psi$ $\lambda$-a.e. If $X$ is a Banach function space of $\lambda$-measurable functions that contains all characteristic functions of sets of finite measure, we let $\widetilde X$ be the space of functions for which $\|f\|_{\widetilde X}=\|\widetilde f\|_X$ is finite. Then $\widetilde{L^\oo_\lambda}=L^\oo_\lambda$, with identical norms; 
$K(\psi,t;\widetilde{L^1_\lambda},L^\oo_\lambda)=\int_0^t\big(\widetilde \psi\big)^*$; and
$(\widetilde{L^1_\lambda},L^\oo_\lambda)$ is an exact Calder\'on couple with divisibility constant $1$.

These results may be found in \cite{Stm}*{Proposition 1.5 and Theorem 2.3}, \cite{MS1}*{Corollary 3.9} and \cite{MS2}*{Lemma 3.2, Corollary 4.2 and Theorem 4.2}. In these references, results are stated for a measure on $\mathbb R$, not on $[0,\oo)$, but we identify $\lambda$ with a measure on $\mathbb R$ for which $\lambda((-\oo,0))=0$ so the results apply.

\section{Maps Between Ordered Cores}\label{Core Maps}

When measure spaces with ordered cores were introduced in \cite{Snf}, it was shown that well-behaved maps between measure spaces with $\sigma$-bounded ordered cores induce bounded linear operators of functions on these measure spaces. In this section we take a closer look at these operators and establish additional properties that we will require later.

Let $(P,\mathcal P, \rho)$ and $(T,\mathcal T,\tau)$ be $\sigma$-finite measure spaces and let $\mathcal A$ be a $\sigma$-bounded ordered core of $(P,\mathcal P, \rho)$. Let $c$ be a positive constant and $r:\mathcal A\to\mathcal T$ be an order-preserving map satisfying
\begin{equation}\label{bdd}
\tau(r(A)\setminus r(B))\le c\rho(A\setminus B)
\end{equation}
for all $A,B\in\mathcal A$. Note that $\{r(A)\setminus r(\emptyset):A\in\mathcal A\}$ is an ordered core of  $(T,\mathcal T,\tau)$.

\begin{theorem}\label{coremap} Let $V$ be the vector space of all $f\in L(\mathcal T)$ such that $f$ is integrable on $r(A)$ for each $A\in \mathcal A$. Then there is a map $R$ defined on $L^+(\mathcal T)\cup V$ such that for all $f\in L^+(\mathcal T)\cup V$:
\begin{enumerate}[label=(\alph*)]
\item\label{a} $Rf\in L(\mathcal A)$ and $Rf=0$ off $\cup\mathcal A$;
\item\label{b} if $f\in L^+(\mathcal T)$ then $Rf\in L^+(\mathcal A)$;
\item\label{c} if $f_n\in L^+(\mathcal T)$ for each $n$ and $f_n\uparrow f$ $\mu$-a.e., then $Rf_n\uparrow Rf$ $\rho$-a.e.;
\item\label{d} $R$ is linear on $V$ and it is additive and positive homogeneous on $L^+(\mathcal T)$;
\item\label{e} $|Rf|\le R|f|$ $\rho$-a.e.;
\item\label{f} for all $A,B\in\mathcal A$,
\[
\int_{B\setminus A}Rf\,d\rho=\int_{r (B)\setminus r (A)}f\,d\tau;
\]
\item\label{g} if $f\in L^1_\tau$, then $R f\in L^1_\rho$ and $\|R f\|_{L^1_\rho}\le\|f\|_{L^1_\tau}$; 
\item\label{h} if $f\in L^\infty_\tau$, then $R f\in L^\oo_\rho$ and $\|R f\|_{L^\infty_\rho}\le c\|f\|_{L^\infty_\tau}$;
\item\label{i} if $c=1$, $r(\emptyset)=\emptyset$, $A\in\mathcal A$ and $\rho(A)=\tau(r(A))$, then $R\chi_{r(A)}=\chi_A$ $\rho$-a.e.
\item\label{j} if $c=1$, $r(\emptyset)=\emptyset$, $\rho(A)=\tau(r(A))$ for all $A\in\mathcal A$, $f\in L^+(\mathcal T)$ and $g\in L^+(r(\mathcal A))$, then $R(fg)=RfRg$ $\rho$-a.e. 
\end{enumerate}
\end{theorem}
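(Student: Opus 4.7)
The plan is to read property \ref{f} as the defining relation and obtain $Rf$ as a Radon-Nikodym derivative. For each $f\in L^+(\mathcal T)$, I aim to build a $\sigma$-finite measure $\nu_f$ on $\sigma(\mathcal A)$ with $\nu_f(B\setminus A)=\int_{r(B)\setminus r(A)}f\,d\tau$ for all $A,B\in\mathcal A$, then set $Rf=d\nu_f/d\rho$, extended by zero off $\cup\mathcal A$. For $f\in V$, I define $Rf=Rf^+-Rf^-$; integrability of $f^\pm$ on each $r(A)$ keeps both parts finite $\rho$-a.e.

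For the construction of $\nu_f$, use $\sigma$-boundedness to fix a countable $\mathcal A_0\subseteq\mathcal A$ with $\cup\mathcal A_0=\cup\mathcal A$, and form the algebra $\mathcal R$ of finite disjoint unions of differences $B\setminus A$ with $A,B\in\mathcal A_0$. Total order of $\mathcal A_0$ makes well-definedness of the premeasure $\nu_f(B\setminus A)=\int_{r(B)\setminus r(A)}f\,d\tau$ automatic (any two such differences are nested or disjoint), and countable additivity follows from the monotone continuity of $\tau$ via the bound \eqref{bdd}. Carath\'eodory extension yields a measure on $\sigma(\mathcal A_0)\subseteq\sigma(\mathcal A)$, and \eqref{bdd} gives $\nu_f\ll\rho$, with bounded density when $f\in L^\infty_\tau$; since $\rho$ is $\sigma$-finite on $\sigma(\mathcal A_0)$, Radon-Nikodym produces $Rf$. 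A density argument, using approximation by $\mathcal A_0$ together with \eqref{bdd}, then shows that the identity in \ref{f} extends from $\mathcal A_0$ to all of $\mathcal A$.

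With $R$ defined, the verifications are largely mechanical. \ref{a}, \ref{b}, \ref{d}, \ref{f} are built into the construction. \ref{c} lifts $\nu_{f_n}\uparrow\nu_f$ through the Radon-Nikodym derivative. \ref{e} uses the Jordan decomposition. \ref{g} follows from $\int_{A_n}|Rf|\,d\rho\le\int_T|f|\,d\tau$ for $A_n\uparrow\cup\mathcal A_0$. \ref{h} is direct from $\nu_f\le c\|f\|_\infty\rho$ on $\mathcal R$. For \ref{i}, the bound \eqref{bdd} with $c=1$ and $r(\emptyset)=\emptyset$ forces $\rho(C)=\tau(r(C))$ for every $C\in\mathcal A$ with $C\subseteq A$; a short case analysis on the possible orderings of $A,B,C\in\mathcal A$ then confirms $\int_{C\setminus B}\chi_A\,d\rho=\tau((r(C)\setminus r(B))\cap r(A))$. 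Finally, \ref{j} follows from \ref{i} by linearity on simple $g$'s of the form $\sum c_i\chi_{r(A_i)}$ and monotone convergence via \ref{c}; a Dynkin-class argument lifts this from $g=\chi_{r(A)}$ to general $g\in L^+(r(\mathcal A))$.

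The main obstacle is countable additivity of the premeasure $\nu_f$ on $\mathcal R$ together with its subsequent extension to all $A,B\in\mathcal A$. Transferring monotone decreases in $\mathcal R$ (where $r$ is only defined on members of $\mathcal A$, not on the full generated algebra) into $(T,\mathcal T,\tau)$ requires careful use of \eqref{bdd} and of the integrability of $f$ on each $r(A)$, and handling elements of $\mathcal A\setminus\mathcal A_0$ forces an approximation argument to ensure the defining identity holds in the generality claimed.
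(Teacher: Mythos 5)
Your construction is viable and is genuinely more self-contained than the paper's: the paper does not rebuild the base operator at all, but imports it from \cite{Snf}*{Theorem 4.6} as a map on $L^1_\tau+L^\oo_\tau$ already satisfying \ref{f}, \ref{g}, \ref{h}, and then spends its effort on the monotone-limit extension to $L^+(\mathcal T)$, the extension to $V$ by positive and negative parts, and the two new properties \ref{i} and \ref{j}. Your premeasure/Carath\'eodory/Radon--Nikodym route in effect re-proves that cited theorem, which is legitimate, and your derivation of \ref{i} (deducing $\rho(C)=\tau(r(C))$ for all $C\subseteq A$ and then checking the semiring identity) is a correct alternative to the paper's shorter argument, which simply applies \ref{f}, \ref{g} and \ref{h} to $f=\chi_{r(A)}$.

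Two steps, however, are asserted rather than proved, and both hide real content. First, ``\eqref{bdd} gives $\nu_f\ll\rho$'' is not a valid inference as stated: absolute continuity on a generating ring does not pass to the generated $\sigma$-ring (Lebesgue measure versus a point mass, on the ring of half-open intervals, is the standard counterexample). What saves you is stronger structure: because $r$ is order-preserving and $\mathcal A$ is totally ordered, disjoint differences $C\setminus B$ are sent to disjoint differences $r(C)\setminus r(B)$, so the premeasure $C\setminus B\mapsto\tau(r(C)\setminus r(B))$ extends to a measure dominated by $c\rho$ on all of $\sigma(\mathcal A_0)$, and $\nu_f(E)=\int_{S_E}f\,d\tau$ for a set $S_E$ with $\tau(S_E)\le c\rho(E)$; this is what must be said to get $\nu_f\ll\rho$. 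Relatedly, $\nu_f$ need not be $\sigma$-finite for arbitrary $f\in L^+(\mathcal T)$, so you must either truncate and take monotone limits or invoke the $[0,\oo]$-valued Radon--Nikodym theorem. Second, and more seriously, your argument for \ref{j} has no base case: property \ref{i} identifies $R\chi_{r(A)}$, but \ref{j} for $g=\chi_{r(A)}$ is the statement $R(f\chi_{r(A)})=Rf\cdot\chi_A$, a multiplicativity assertion that neither \ref{i} nor linearity in $g$ provides. It must be proved separately --- for instance by checking that $\int_{C\setminus B}R(f\chi_{r(A)})\,d\rho$ and $\int_{C\setminus B}Rf\,\chi_A\,d\rho$ both equal $\int_{(r(C)\setminus r(B))\cap r(A)}f\,d\tau$, using \ref{f} and the identity $r(C\cap A)=r(C)\cap r(A)$ --- before your Dynkin-class argument can lift it to all of $\sigma(r(\mathcal A))$. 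This is precisely the step the paper handles by showing that the two finite measures $E\mapsto\int_BRf\,R\chi_E\,d\rho$ and $E\mapsto\int_BR(f\chi_E)\,d\rho$ agree on the semiring $\{r(A_1)\setminus r(A_2):A_1,A_2\in\mathcal A\}$.
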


\begin{proof} \textbf{The original map.} From Theorem 4.6 of \cite{Snf} there is a linear map $R:L^1_\tau+L^\oo_\tau\to L^1_\rho+L^\oo_\rho$ that maps nonnegative functions to nonnegative functions and satisfies \ref{f}, \ref{g}, and \ref{h} for each $f\in L^1_\tau+L^\oo_\tau$. In the proof of that theorem it is shown that \ref{a} is satisfied for each nonnegative $f\in L^1_\tau+L^\oo_\tau$.

The following observation will be needed: If $f\in L^1_\tau+L^\oo_\tau$ and $(f_n)$ is an increasing sequence of nonnegative functions in $L^1_\tau+L^\oo_\tau$ that converges  to $f$ pointwise $\tau$-a.e., then $Rf_n$ converges to $Rf$ pointwise $\rho$-a.e. To see this, fix $A\in\mathcal A$. Since $\rho(A)<\oo$ and $Rf\in L^1_\rho+L^\oo_\rho$, $\int_A Rf\,d\rho<\oo$. By \ref{f}, and the Monotone Convergence Theorem, 
\[
\int_A Rf\,d\rho=\int_{r(A)\setminus r(\emptyset)} f\,d\tau=\lim_{n\to\oo}\int_{r(A)\setminus r(\emptyset)} f_n\,d\tau=\lim_{n\to\oo}\int_A Rf_n\,d\rho.
\]
The positivity of $R$ on $L^1_\tau+L^\oo_\tau$ ensures that $Rf_n$ is an increasing sequence bounded above by $Rf$. So another application of the Monotone Convergence theorem yields
\[
\int_A Rf\,d\rho
=\int_A \lim_{n\to\oo}Rf_n\,d\rho\le\int_A Rf\,d\rho.
\]
Thus, $Rf=\lim_{n\to\oo}Rf_n$ $\rho$-a.e. on $A$. By the $\sigma$-boundedness of $\mathcal A$,  $Rf=\lim_{n\to\oo}Rf_n$ $\rho$-a.e. on $\cup\mathcal A$. But both $Rf$ and $Rf_n$ are zero off $\cup\mathcal A$ so $Rf_n\to Rf$ pointwise $\rho$-a.e. on $P$.

\textbf{Extension to nonnegative functions.} With this in hand, we may define $R$ on $L^+(\mathcal T)$. For each $f\in L^+(\mathcal T)$ let $Rf$ be the pointwise limit of $Rf_n$, where $(f_n)$ is an increasing sequence in $L^+(\mathcal T)\cap L^1_\tau$ that converges pointwise to $f$. The previous observation ensures that this $R$ agrees with the original $R$ whenever both are defined.

Since $\tau$ is $\sigma$-finite, each $f\in L^+(\mathcal T)$ can be expressed as the limit of such a sequence and since $R$ maps nonnegative functions to nonnegative functions, $Rf_n$ is an increasing sequence in $L^+(\mathcal T)$ and hence has a (finite or infinite) pointwise limit. It remains to show that the definition of $Rf$ on $L^+(\mathcal T)$ does not depend on the choice of $(f_n)$. Fix $f\in L^+(\mathcal T)$ and let $(f_n)$ and $(g_n)$ be two increasing sequences in $L^+(\mathcal T)\cap L^1_\tau$ that converge pointwise to $f$. For each fixed $m$, $h_n=\min(f_n,g_m)$ defines an increasing sequence in $L^+(\mathcal T)\cap L^1_\tau$ that converges pointwise to $g_m$.  By the Monotone Convergence Theorem, $h_n\to g_m$ in the space $L^1_\tau$. By \ref{g}, $Rh_n\to Rg_m$ in $L^1_\rho$. But $(Rh_n)$ is a increasing sequence so the Monotone Convergence Theorem shows that  $Rh_n$ converges to $Rg_m$ pointwise $\rho$-a.e. Since $h_n\le f_n$, $Rh_n\le Rf_n$ and therefore $Rg_m=\lim_{n\to\oo} Rh_n\le\lim_{n\to\oo}Rf_n$ for each $m$. Letting $m\to\oo$ we get $\lim_{m\to\oo}Rg_m\le\lim_{n\to\oo}Rf_n$. Reversing the roles of $f$ and $g$ gives the opposite inequality as well. Thus $Rf$ is well defined as a map from $L^+(\mathcal T)$ to $L^+(\mathcal T)$.

Since \ref{a} and \ref{b} are preserved by limits of sequences, they hold for all $f\in L^+(\mathcal T)$. 

To see that \ref{c} also holds, fix $f\in L^+(\mathcal T)$ and let $(f_n)$ be an increasing sequence in $L^+(\mathcal T)$ that converges pointwise to $f$. Take $(g_m)$ to be an increasing sequence in $L^+(\mathcal T)\cap L^1_\tau$ that converges pointwise to $f$. By definition, $Rg_m$ converges pointwise to $Rf$. For each fixed $m$, $h_n=\min(f_n,g_m)$ defines an increasing sequence in $L^+(\mathcal T)\cap L^1_\tau$ that converges pointwise to $g_m$ so $Rh_n$ converges pointwise to $Rg_m$. For each $n$, $h_n\le f_n$ so $Rh_n\le Rf_n\le\lim_{n\to\oo}f_n$. It follows that $Rg_m\le\lim_{n\to\oo}Rf_n$. Letting $m\to\oo$, we get
\[
Rf=\lim_{m\to\oo} Rg_m\le\lim_{n\to\oo}Rf_n\le Rf.
\] 

For \ref{d}, suppose $f,g\in L^+(\mathcal T)$ and $\alpha\in[0,\oo)$. Let $(f_n)$ and $(g_n)$ be increasing sequences in $L^+(\mathcal T)\cap L^1_\tau$ that converge pointwise to $f$ and $g$ respectively. Then $(\alpha f_n)$ and $(f_n+g_n)$ are increasing sequences in $L^+(\mathcal T)\cap L^1_\tau$ that converge pointwise to $\alpha f$ and $f+g$ respectively. Since $R$ is linear on $L^1_\tau$, $R(\alpha f)=\lim_{n\to\oo}R(\alpha f_n)=\alpha \lim_{n\to\oo}R(f_n)=\alpha Rf
$ and $R(f+g)=\lim_{n\to\oo}R(f_n+g_n)=\lim_{n\to\oo}R(f_n)+R(g_n)=Rf+Rg$. 

In this case, \ref{e} is trivial, \ref{f} is a consequence of the Monotone Convergence Theorem and \ref{g} and \ref{h} are unchanged.

\textbf{Extension to the vector space.} Next we define $R$ on $V$. For each $f\in V$, define $Rf$ to be $Rf^+-Rf^-$, where $f^+=(|f|+f)/2$ and $f^-=(|f|-f)/2$ as usual. Linearity of the original $R$ implies that this $R$ agrees with the original $R$ whenever both are defined.

Since $f$ is integrable on $r(A)$ for each $A\in\mathcal A$, both $f^+$ and $f^-$ are integrable on each $r(A)$. But $f^+,f^-\in L^+_\tau$ so \ref{f} implies that $Rf^+$ and $Rf^-$ are finite $\rho$-a.e. on each $A\in \mathcal A$. The core $\mathcal A$ is assumed to be $\sigma$-bounded so $Rf^+$ and $Rf^-$ are finite $\rho$-a.e. on $\cup\mathcal A$. By \ref{a}, $Rf^+$ and $Rf^-$ are zero off $\cup\mathcal A$ so the difference $Rf^+-Rf^-$ is defined $\rho$-a.e. on $P$.

Taking differences preserves \ref{a} so it remains valid on $V$; \ref{b} and \ref{c} involve only nonnegative functions; and 
\ref{d} is readily extended from $L^+(\mathcal T)$ to $V$ by applying $R$ to the identities, 
\begin{align*}
(\alpha f)^++\alpha f^-&=(\alpha f)^-+\alpha f^+\quad\mbox{for $\alpha\ge0$},\\
(\alpha f)^++(-\alpha) f^+&=(\alpha f)^-+(-\alpha) f^-\quad\mbox{for $\alpha<0$},\\
(f+g)^++f^-+g^-&=(f+g)^-+f^++g^+.
\end{align*}

For \ref{e},
\[
|Rf|=|Rf^+-Rf^-|\le Rf^++Rf^-=R(f^++f^-)=R|f|.
\]
The definition of $V$ and linearity of the integral gives \ref{f}. Again, \ref{g} and \ref{h} are unchanged.

\textbf{The last two properties.} Now we turn our attention to \ref{i} and \ref{j}. If $c=1$, $r(\emptyset)=\emptyset$, and $A\in\mathcal A$, taking $f=\chi_{r(A)}$ in \ref{f}, \ref{g}, and \ref{h}  give
\[
\int_A R\chi_{r(A)}\,d\rho =\tau(r(A)), \quad \int_P R\chi_{r(A)}\,d\rho \le\tau(r(A)),\quad\mbox{and}\quad R\chi_{r(A)}\le1\  \rho\text{-a.e.}
\]
Since $\tau(r(A))<\oo$, it follows that $R\chi_{r(A)}\le\chi_A$ $\rho$-a.e. But $\int_P\chi_A- R\chi_{r(A)}\,d\rho=\rho(A)-\tau(r(A))=0$, which implies \ref{i}.

For \ref{j}, suppose $c=1$, $r(\emptyset)=\emptyset$, and $\rho(A)=\tau(r(A))$ for all $A\in \mathcal A$. First suppose $f\in L^+(\mathcal T)$ is bounded above. By \ref{h}, $Rf$ is also bounded above $\rho$-a.e. Fix a $B\in\mathcal A$. Then $\rho(B)<\oo$ and for every $E\in\mathcal T$, \ref{b} and \ref{h} show that $R(f\chi_E)\le Rf$ and $R\chi_E\le 1$ $\rho$-a.e. These observations, combined with \ref{d} and \ref{c}, show that the maps
\[
\eta(E)=\int_BRfR\chi_E\,d\rho\quad\mbox{and}\quad \zeta(E)=\int_BR(f\chi_E)\,d\rho
\]
define finite measures on the measurable space $(T,\mathcal T)$. If $E=r(A)$ for some $A\in\mathcal A$, and we let $C$ denote the smaller of $A$ and $B$, then $r(C)$ is the smaller of $r(A)$ and $r(B)$ so, using \ref{i} and \ref{f}, we get
\[
\eta(r(A))=\int_BRfR\chi_{r(A)}\,d\rho=\int_B (Rf)\chi_A\,d\rho=\int_CRf\,d\rho
\]
and
\[
\zeta(r(A))=\int_BR(f\chi_{r(A)})\,d\rho=\int_{r(B)}f\chi_{r(A)}\,d\tau
=\int_{r(C)}f\,d\tau=\int_CRf\,d\rho.
\]
Thus, $\eta(r(A))=\zeta(r(A))$ for all $A\in\mathcal A$. Taking differences, we see that the measures $\eta$ and $\zeta$ coincide on the semiring $\{r(A_1)\setminus r(A_2):A_1,A_2\in\mathcal A\}$, see Lemma 4.3 of \cite{Snf}. They therefore coincide on the $\sigma$-ring generated by the semiring, see \cite{RF}*{Corollary 14 on page 357}, which is $\sigma(r(\mathcal A))$. So for each $E\in\sigma(r(\mathcal A))$,  
\[
\int_BRfR\chi_E\,d\rho=\int_BR(f\chi_E)\,d\rho.
\]
By \ref{a}, $Rf$, $R\chi_E$, and $R(f\chi_E)$ are $\sigma(\mathcal A)$-measurable on $\cup\mathcal A$ and zero off $\cup\mathcal A$. It follows that 
\[
R(f\chi_E)=RfR\chi_E
\]
$\rho$-a.e.

If $g\in L^+(r(\mathcal A))$, it can be expressed as the pointwise limit of an increasing sequence of finite linear combinations, with positive coefficients, of characteristic functions of sets in $\sigma(r(\mathcal A))$. So, by \ref{d} and \ref{c}, $R(fg)=RfRg$.

Each $f\in L^+(\mathcal T)$ can be expressed as an increasing sequence of bounded functions in $L^+(\mathcal T)$ so one more application of \ref{c} completes the proof.
\end{proof}

\section{Core Decreasing Functions}\label{Core Functions}

Let $(U,\Sigma,\mu)$ be a $\sigma$-finite measure space and let $A$ be an ordered core.

Many different ordered cores may give rise to the same order on elements and generate the same $\sigma$-ring. For our purposes, it will be convenient to enrich our ordered core $\mathcal A$ by adding in as many additional sets as we can while ensuring that neither the order $\le_{\mathcal A}$ nor the $\sigma$-ring $\sigma(\mathcal A)$ is altered. This is done in Theorem \ref{M}, below. First we show that these additional sets can be characterized in three different ways.
\begin{lemma}\label{3fae} Suppose $M\in\sigma(\mathcal A)$ and $\mu(M)<\oo$. The following are equivalent:
\begin{enumerate}[label=(\alph*)]
\item\label{sC} There is a countable subset $\mathcal C$ of $\mathcal A$ such that $M=\cup\mathcal C$ or $M=\cap\mathcal C$;
\item\label{C} There is a subset $\mathcal C$ of $\mathcal A$ such that $M=\cup\mathcal C$ or $M=\cap\mathcal C$;
\item\label{rs} For all $u,v\in U$, if $v\in M$ and $u\le_\mathcal A v$, then $u\in M$.
\end{enumerate}
In the above, if $\mathcal C=\emptyset$ we take $\cup\mathcal C=\emptyset$ and $\cap\mathcal C=\cup\mathcal A$.
\end{lemma}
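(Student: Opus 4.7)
The plan is to prove $(a)\Rightarrow(b)\Rightarrow(c)\Rightarrow(a)$, with essentially all the work in the last implication. The first implication is immediate.

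For $(b)\Rightarrow(c)$, split into cases. If $M=\cup\mathcal{C}$ and $v\in M$, $u\le_{\mathcal{A}}v$, pick $A\in\mathcal{C}$ with $v\in A$; the definition of the order forces $u\in A\subseteq M$. If $M=\cap\mathcal{C}$ with $\mathcal{C}\neq\emptyset$, then $v\in A$ for every $A\in\mathcal{C}$, so $u\in A$ for each, and $u\in M$. The convention case $\mathcal{C}=\emptyset$, where $\cap\mathcal{C}=\cup\mathcal{A}$, is handled by the union argument applied to the full core.

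For $(c)\Rightarrow(a)$, I would first establish a \emph{comparability} fact: every $A\in\mathcal{A}$ satisfies $A\subseteq M$ or $M\subseteq A$. Indeed, assuming $u\in A\setminus M$ and $v\in M\setminus A$, then for any $B\in\mathcal{A}$ containing $v$ the total ordering of $\mathcal{A}$ forces $A\subsetneq B$ (since $v\in B\setminus A$ rules out both $B=A$ and $B\subseteq A$), so $u\in A\subseteq B$; hence $u\le_{\mathcal{A}}v$, and $(c)$ yields the contradiction $u\in M$. Next, using the standard fact that every element of a $\sigma$-ring generated by a family lies in the $\sigma$-ring generated by a countable subfamily, choose a countable $\mathcal{A}_0\subseteq\mathcal{A}$ containing $\emptyset$ with $M\in\sigma(\mathcal{A}_0)$. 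Let $\mathcal{C}_1^0=\{A\in\mathcal{A}_0:A\subseteq M\}$ (nonempty since $\emptyset\in\mathcal{A}_0$) and $\mathcal{C}_2^0=\{A\in\mathcal{A}_0:M\subseteq A\}$; comparability gives $\mathcal{A}_0=\mathcal{C}_1^0\cup\mathcal{C}_2^0$. Set $N_1=\cup\mathcal{C}_1^0$ and $N_2=\cap\mathcal{C}_2^0$; these are countable unions/intersections of elements of $\mathcal{A}$, and $N_1\subseteq M\subseteq N_2$. When $\mathcal{C}_2^0=\emptyset$, every member of $\sigma(\mathcal{A}_0)$ lies in $\cup\mathcal{A}_0=N_1$, so $M=N_1$ and $(a)$ follows.

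The main obstacle is the remaining case $\mathcal{C}_2^0\neq\emptyset$, where I need $M\in\{N_1,N_2\}$. The key observation is that each $A\in\mathcal{A}_0$ is either contained in $N_1$ (if $A\in\mathcal{C}_1^0$) or contains $N_2$ (if $A\in\mathcal{C}_2^0$), so $A$ either misses $N_2\setminus N_1$ entirely or contains it entirely. Hence any two points of $N_2\setminus N_1$ belong to exactly the same members of $\mathcal{A}_0$. Declare $u\sim v$ when $u$ and $v$ lie in the same members of $\mathcal{A}_0$: a standard $\sigma$-ring closure argument shows that every element of $\sigma(\mathcal{A}_0)$ is a union of $\sim$-classes, since the collection of sets which do not separate any $\sim$-equivalent pair contains $\mathcal{A}_0$ and is closed under countable unions and set differences, so it exhausts $\sigma(\mathcal{A}_0)$. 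In particular $M$ is such a union and $N_2\setminus N_1$ is a single $\sim$-class, so $M\cap(N_2\setminus N_1)$ is either empty (forcing $M=N_1$) or all of $N_2\setminus N_1$ (forcing $M=N_2$), which gives $(a)$.
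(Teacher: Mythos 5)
Your proof is correct and follows essentially the same route as the paper's: the comparability of $M$ with every element of $\mathcal A$, reduction to a countable subfamily $\mathcal A_0$ with $M\in\sigma(\mathcal A_0)$, the sandwich $\cup\mathcal C_1^0\subseteq M\subseteq\cap\mathcal C_2^0$, and a good-sets $\sigma$-ring argument showing $M$ cannot properly split the gap $N_2\setminus N_1$. The only (cosmetic) difference is that the paper runs the good-sets argument with the class $\mathcal K$ of sets in $\sigma(\mathcal A_0)$ that either contain or are disjoint from $C=N_2\setminus N_1$, whereas you use the class of sets not separating $\sim$-pairs; note only that $N_2\setminus N_1$ is merely \emph{contained in} a single $\sim$-class (and may be empty), which is all your conclusion needs.
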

\begin{proof} If \ref{sC} holds then so does \ref{C}. Next suppose that \ref{C} holds for a subset $\mathcal C$, that $v\in M$ and that $u\le_\mathcal A v$. We show that $u\in M$ in two cases: If $M=\cup \mathcal C$, then for some $A\in\mathcal C$, $v\in A$ and hence $u\in A$. Thus $u\in M$. If $M=\cap \mathcal C$, then for all $A\in\mathcal C$, $v\in A$ and hence $u\in A$. Thus $u\in M$. This proves \ref{rs}.

Finally, we suppose that \ref{rs} holds. First observe that $\mathcal A\cup\{M\}$ remains totally ordered: If $A\in\mathcal A$ and $M\not\subseteq A$, choose $v\in M\setminus A$. If $u\in A$, then $v\not\le_\mathcal A u$ so $u\le_\mathcal A v$ and we have $u\in M$. Thus $A\subseteq M$.

Since $M\in\sigma(\mathcal A)$ we may choose a countable subset $\mathcal A_0$ of $\mathcal A$ such that $M\in\sigma(\mathcal A_0)$. (This is \cite{Ha}*{Theorem D on page 21}, but readily follows from the observation that the union of all $\sigma$-rings generated by countable subsets of $\mathcal A$ is itself a $\sigma$-ring.) Now let $U_0=\cup\mathcal A_0$, 
\[
\mathcal L=\{L\in\mathcal A_0:L\subseteq M\},\quad \mathcal N=\{N\in\mathcal A_0:M\subseteq N\},\quad C=(\cap\mathcal N)\setminus(\cup\mathcal L),
\]
and
\[
\mathcal K=\{A\in\sigma(\mathcal A_0):C\subseteq U_0\setminus A\text{ or }C\subseteq A\}.
\]
Since $\mathcal A_0\cup\{M\}$ is totally ordered, $\mathcal A_0=\mathcal L\cup\mathcal N$. If $L\in\mathcal L$, $C\subseteq U_0\setminus L$. If $N\in\mathcal N$, then $C\subseteq N$. Therefore, $\mathcal A_0\subseteq\mathcal K$. To see that $\sigma(A_0)\subseteq\mathcal K$ it is enough to show that $\mathcal K$ is a $\sigma$-algebra of subsets of $U_0$. Clearly, $\emptyset\in \mathcal K$, $U_0\in\mathcal K$, and $\mathcal K$ is closed under complementation. If $A_n\in\mathcal K$ for $n=1,2,\dots$, then either $C\subseteq A_n$ for all $n$ or $C\subseteq U_0\setminus A_n$ for some $n$. In the first case $C\subseteq \cap_n A_n$ and in the second case $C\subseteq \cup_n (U_0\setminus A_n)=U_0\setminus\cap_n A_n$. Thus $\mathcal K$ is closed under countable intersections. This shows that $\mathcal K$ is a $\sigma$-algebra. But $M\in\sigma(\mathcal A_0)$, so $M\in\mathcal K$. It follows that $C\subseteq U_0\setminus M$ or $C\subseteq M$ so either $M=\cup\mathcal L$ or $M=\cap\mathcal N$. Since both $\mathcal L$ and $\mathcal N$ are countable subsets of $\mathcal A$, this establishes \ref{sC} and completes the proof.
\end{proof}

Now we are ready to produce the enriched core. Although the image of $\mathcal A$ under $\mu$, i.e. $\mu(\mathcal A)\equiv\{\mu(A):A\in\mathcal A\}$, may not be a closed subset of $[0,\oo)$, the image of the enriched core always is. 

\begin{theorem}\label{M} Let $\mathcal M$ be the collection of all $M\in\sigma(\mathcal A)$, of finite measure, for which one, and hence all, of \ref{sC}, \ref{C}, and \ref{rs} of Lemma \ref{3fae} holds. Then $\mathcal M$ is an ordered core of $U$, $\mathcal A\subseteq\mathcal M$, $\sigma(\mathcal M)=\sigma(\mathcal A)$, and the relations $\le_{\mathcal M}$ and $\le_{\mathcal A}$ coincide. If $\mathcal A$ is $\sigma$-bounded, so is $\mathcal M$. If $\mathcal A$ is full, so is $\mathcal M$. In addition, $\mathcal M$ is closed under (nonempty) countable intersections and under countable unions provided the result has finite measure. Finally, $\mu(\mathcal M)$ is the closure in $[0,\oo)$ of $\mu(\mathcal A)$.
\end{theorem}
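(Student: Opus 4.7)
The strategy is to verify the properties of $\mathcal{M}$ in the order stated, using the three equivalent characterizations \ref{sC}, \ref{C}, \ref{rs} of Lemma \ref{3fae} as needed. For the total order, given $M, N \in \mathcal{M}$ with $M \not\subseteq N$, pick $v \in M \setminus N$; for every $u \in N$, the totality of $\le_\mathcal{A}$ together with \ref{rs} applied to $N$ forces $u \le_\mathcal{A} v$ (otherwise $v \le_\mathcal{A} u$ with $u \in N$ would put $v \in N$), and then \ref{rs} applied to $M$ puts $u \in M$, so $N \subseteq M$. The empty set lies in $\mathcal{M}$ trivially, and $\mathcal{A} \subseteq \mathcal{M}$ follows from \ref{C} with $\mathcal{C} = \{A\}$; this immediately yields $\sigma(\mathcal{M}) = \sigma(\mathcal{A})$ and (using \ref{rs} once more) the coincidence $\le_\mathcal{M} = \le_\mathcal{A}$.

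For preservation of $\sigma$-boundedness (and hence of fullness), the key observation is that every $M \in \mathcal{M}$ is contained in $\cup\mathcal{A}$---immediate from \ref{sC} in each of its three subcases (union, nonempty intersection, and the empty-collection case whose intersection is $\cup\mathcal{A}$ by convention)---so a countable cofinal $\mathcal{A}_0 \subseteq \mathcal{A}$ witnessing $\sigma$-boundedness of $\mathcal{A}$ also satisfies $\cup\mathcal{A}_0 = \cup\mathcal{M}$. Closure under nonempty countable intersections and under countable unions of finite measure is then verified by checking all three defining conditions of $\mathcal{M}$: $\sigma(\mathcal{A})$-membership follows from $\sigma$-ring closure, the finite-measure requirement is automatic (intersections are contained in any given set; unions are finite by hypothesis), and \ref{rs} propagates unchanged through both operations.

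The substantive piece is the identification $\mu(\mathcal{M}) = \overline{\mu(\mathcal{A})}$. The inclusion $\mu(\mathcal{M}) \subseteq \overline{\mu(\mathcal{A})}$ is handled via \ref{sC}: writing $M = \cup\mathcal{C}$ or $M = \cap\mathcal{C}$ with $\mathcal{C} = \{A_n\}$ countable, the partial unions (resp.\ intersections) $B_n$ are maxima (resp.\ minima) of finitely many comparable elements of the totally ordered $\mathcal{A}$, hence lie in $\mathcal{A}$, and $\mu(B_n) \to \mu(M)$ monotonically. Conversely, given $\mu(A_n) \to \alpha < \infty$, I pass to a monotone subsequence; the useful fact is that in a totally ordered family, two elements with strictly different measures must be nested in the corresponding direction, so along a strictly monotone subsequence the $A_n$ are themselves nested, and $\cup A_n$ or $\cap A_n$ then supplies an element of $\mathcal{M}$ of measure $\alpha$ via \ref{sC}. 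The main obstacle is the edge case in \ref{sC} with $\mathcal{C} = \emptyset$ and $M = \cap\mathcal{C}$ interpreted as $\cup\mathcal{A}$: one must show $\mu(\cup\mathcal{A}) \in \overline{\mu(\mathcal{A})}$ when this value is finite. The resolution is that $M \in \sigma(\mathcal{A})$, so the Halmos theorem already invoked in Lemma \ref{3fae} yields a countable $\mathcal{A}_0 \subseteq \mathcal{A}$ with $M \in \sigma(\mathcal{A}_0) \subseteq$ the power set of $\cup\mathcal{A}_0$, forcing the chain $\cup\mathcal{A} = M \subseteq \cup\mathcal{A}_0 \subseteq \cup\mathcal{A}$ to collapse to $\cup\mathcal{A} = \cup\mathcal{A}_0$ and reducing this case to the countable-union case already treated.
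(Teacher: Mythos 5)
Your proof is correct and follows essentially the same route as the paper's: the same exchange argument via \ref{rs} for total ordering, the same verification that \ref{rs} passes through countable unions and intersections, and the same monotone-subsequence/nesting argument identifying $\mu(\mathcal M)$ with $\overline{\mu(\mathcal A)}$. Your explicit treatment of the $\mathcal C=\emptyset$ edge case (reducing $M=\cup\mathcal A$ to a countable union via the Halmos countable-generation theorem) is a point the paper passes over silently, and is a welcome addition.
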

\begin{proof} It is immediate that $\mathcal A\subseteq\mathcal M$ so $\emptyset\in \mathcal M$ and $\sigma(\mathcal A)\subseteq\sigma(\mathcal M)$. By \ref{sC}, $\mathcal M\subseteq\sigma(\mathcal A)$ and we see that $\sigma(\mathcal A)=\sigma(\mathcal M)$. Since every element of $\sigma(\mathcal A)$ is a subset of $\cup\mathcal A$, $\cup\mathcal M=\cup\mathcal A$. Thus, if $\mathcal A$ is $\sigma$-bounded, so is $\mathcal M$ and if $\mathcal A$ is full, so is $\mathcal M$.

Suppose $M,N\in\mathcal M$ such that $N\not\subseteq M$ and choose $v\in N\setminus M$. If $u\in M$ then \ref{rs} implies $v\not\le_{\mathcal A}u$. Thus $u\le_{\mathcal A}v$ and we conclude that $u\in N$. This shows $M\subseteq N$ so $\mathcal M$ is totally ordered. Therefore, $\mathcal M$ is an ordered core of $U$.

Since $\mathcal A\subseteq\mathcal M$, $u\le_{\mathcal A}v$ holds whenever $u\le_{\mathcal M}v$ does. On the other hand, if $u\le_{\mathcal A}v$ holds and $M\in\mathcal M$ with $v\in M$, then \ref{rs} shows that $u\in M$. So $u\le_{\mathcal M}v$ holds. Thus the relations $\le_{\mathcal M}$ and $\le_{\mathcal A}$ coincide.

Let $M_n\in\mathcal M$ for all $n$. If $M=\cap_n M_n$, then $M\in\sigma(\mathcal A)$ and $\mu(M)<\oo$. If $v\in M$ and $u\le_{\mathcal A}v$, then $v\in M_n$ for all $n$ so $u\in M_n$ for all $n$ and therefore $u\in M$. Thus $M\in\mathcal M$.  If $M=\cup_n M_n$, then $M\in\sigma(\mathcal A)$. If $v\in M$ and $u\le_{\mathcal A}v$, then $v\in M_n$ for some $n$ so $u\in M_n$ for some $n$ and therefore $u\in M$. Thus $M\in\mathcal M$ provided $\mu(M)<\oo$. 

It remains to show that $\mu(\mathcal M)$ is the closure of $\mu(\mathcal A)$ in $[0,\oo)$. First suppose that $x$ is in the closure of $\mu(A)$. If $x\in\mu(\mathcal A)$ then $x\in\mu(\mathcal M)$ because $\mathcal A\subseteq \mathcal M$. If $x\notin\mu(\mathcal A)$ then $x$ can be expressed as the limit of a sequence $(x_n)$ in $\mu(\mathcal A)$ that is either strictly increasing or strictly decreasing. For each $n$ choose $A_n\in\mathcal A$ such that $x_n=\mu(A_n)$. If $x_n$ is strictly increasing, the total ordering of $\mathcal A$ ensures that $A_1\subseteq A_2\subseteq \dots$ so $x=\lim_{n\to\oo}\mu(A_n)=\mu(\cup_n A_n)\in\mu(\mathcal M)$. If $x_n$ is strictly decreasing, the total ordering of $\mathcal A$ ensures that $A_1\supseteq A_2\supseteq \dots$ and, since $\mu(A_1)<\oo$, we get $x=\lim_{n\to\oo}\mu(A_n)=\mu(\cap_n A_n)\in\mu(\mathcal M)$.

Conversely, suppose $x\in\mu(\mathcal M)$. Then $x=\mu(M)$ for some $M\in\mathcal M$. By \ref{sC}, we may choose $A_1, A_2,\dots\in\mathcal A$ such that either $M=\cup_n A_n$ or $M=\cap_nA_n$. Since $\mathcal A$ is totally ordered it is trivially closed under finite unions and finite intersections. Thus, $x=\lim_{n\to\oo}\mu(A_1\cup\dots\cup A_n)$ or $x=\lim_{n\to\oo}\mu(A_1\cap\dots\cap A_n)$. Each of these limits is in the closure of $\mu(\mathcal A)$. This completes the proof.
\end{proof}

We can characterize the collection $L^\downarrow(\mathcal A)=L^\downarrow(\mathcal M)$ as increasing limits of simple functions based on sets in the enriched core $\mathcal M$ just constructed.

\begin{lemma}\label{approx} Suppose $\mathcal A$ is $\sigma$-bounded. Let $f:U\to[0,\oo)$. Then $f\in L^\downarrow(\mathcal A)$ if and only if $f$ is the pointwise limit of an increasing sequence of functions of the form 
\[
\sum_{k=1}^K\alpha_k\chi_{M_k},
\]
where $\alpha_k>0$ and $M_k\in\mathcal M$ for each $k$.
\end{lemma}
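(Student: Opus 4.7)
The plan is to prove the two implications separately. For the reverse implication, I would simply observe that every building block $\chi_{M_k}$ with $M_k\in\mathcal M$ lies in $L^\downarrow(\mathcal A)$: it is $\sigma(\mathcal M)$-measurable, hence $\sigma(\mathcal A)$-measurable by Theorem \ref{M}, and condition \ref{rs} of Lemma \ref{3fae} says exactly that $M_k$ is $\le_{\mathcal A}$-downward closed, which is equivalent to $\chi_{M_k}$ being decreasing. Positive linear combinations of such functions are again nonnegative, $\sigma(\mathcal A)$-measurable, and decreasing, and all three properties pass through pointwise increasing limits. So any function of the described form lies in $L^\downarrow(\mathcal A)$.

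For the forward implication, fix $f\in L^\downarrow(\mathcal A)$. I would first note that $f$ vanishes off $\cup\mathcal A$: $\{f>0\}$ lies in $\sigma(\mathcal A)$, and every set in $\sigma(\mathcal A)$ is contained in $\cup\mathcal A$. Using $\sigma$-boundedness, choose a countable enumeration $\{B_1,B_2,\dots\}\subseteq\mathcal A$ with $\cup_j B_j=\cup\mathcal A$ and set $A_n=B_1\cup\cdots\cup B_n$. Because $\mathcal A$ is totally ordered, each $A_n$ coincides with one of the $B_j$'s, so $A_n\in\mathcal A$, the sequence is increasing, and $\cup_n A_n=\cup\mathcal A$.

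Next I would apply the standard dyadic approximation, truncated by $A_n$. For each $n\ge 1$ and $k\in\{1,\dots,n2^n\}$, set $M_{n,k}=\{f>k/2^n\}\cap A_n$ and
\[
f_n=\sum_{k=1}^{n2^n}\frac1{2^n}\,\chi_{M_{n,k}}.
\]
Each $M_{n,k}$ belongs to $\mathcal M$ by Theorem \ref{M}: it is the intersection of two $\sigma(\mathcal A)$-measurable sets, it has finite measure since $M_{n,k}\subseteq A_n$, and it satisfies \ref{rs} because if $u\le_{\mathcal A}v$ and $v\in M_{n,k}$, then $u\in A_n$ (as $A_n\in\mathcal A$) and $f(u)\ge f(v)>k/2^n$ by the decreasingness of $f$. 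A routine dyadic argument then shows that $(f_n)$ is increasing in $n$ and converges pointwise to $f$: off $\cup\mathcal A$ both sides vanish, while for $u\in A_N$ and $n\ge\max(N,f(u))$ one has $f_n(u)=\lfloor 2^nf(u)\rfloor/2^n\uparrow f(u)$. Monotonicity in $n$ uses that $A_n\subseteq A_{n+1}$ and that halving the mesh $\{f>k/2^n\}=\{f>2k/2^{n+1}\}$ refines the partial sum.

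The only real obstacle is arranging that the approximating sets lie in $\mathcal M$ rather than merely in $\sigma(\mathcal A)$: the $\sigma(\mathcal A)$-measurability of $f$ gives the downward-closure condition \ref{rs} on the raw level sets $\{f>k/2^n\}$ for free, but those level sets may have infinite measure. Intersecting with $A_n\in\mathcal A$ simultaneously produces finite measure and preserves \ref{rs}, which is precisely why the enriched core $\mathcal M$, rather than $\mathcal A$ itself, is the natural choice for the statement.
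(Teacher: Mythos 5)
Your proof is correct and follows essentially the same route as the paper's: the same closure properties for the reverse implication, and the same truncated dyadic approximation with level sets intersected with an exhausting sequence $A_n\in\mathcal A$ for the forward one (the paper uses $\{f\ge k2^{-n}\}$ rather than $\{f>k/2^n\}$, so your exact formula $f_n(u)=\lfloor 2^nf(u)\rfloor/2^n$ is off by $2^{-n}$ when $2^nf(u)$ is an integer, but the increasing pointwise convergence to $f$ is unaffected). The only other difference is that you make explicit that $f$ vanishes off $\cup\mathcal A$, which the paper leaves implicit in the factor $\chi_{A_n}$.
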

		\begin{proof} If $M\in\mathcal M$, then $M\in\sigma(\mathcal M)=\sigma(\mathcal A)$ so $\chi_M\in L^+(\mathcal A)$. To see that $\chi_M$ is core decreasing, let $u\le v$. If $v\notin M$, $\chi_M(v)\le\chi_M(u)$ holds trivially and if $v\in M$ then Lemma \ref{3fae}\ref{rs} shows $u\in M$ so again $\chi_M(v)\le\chi_M(u)$.
 
It is routine to verify that $L^\downarrow(\mathcal A)$ is closed under the formation of finite linear combinations with positive coefficients and also under limits of increasing sequences whose limits are finite $\mu$-a.e. Thus, if $f$ is the pointwise limit of an increasing sequence of functions of the specified form, then  $f\in L^\downarrow(\mathcal A)$.

For the converse, suppose $f\in L^\downarrow(\mathcal A)$ and apply the $\sigma$-boundedness of $\mathcal A$ to choose an increasing sequence $A_n\in\mathcal A$ such that $\cup_n A_n=\cup\mathcal A$. For each integer $n>0$, set 
\[
M_{n,k}=\{u\in A_n:f(u)\ge k2^{-n}\}, \quad k=1,2,\dots,n2^n.
\]
Since $f$ is  $\sigma(\mathcal A)$-measurable, $M_{n,k}\in\sigma(\mathcal A)$. It has finite $\mu$-measure because $A_n$ does, and it satisfies \ref{3fae}\ref{rs} because $f$ is core decreasing. Thus, $M_{n,k}\in\mathcal M$. Define $f_n$ by
\[
f_n(u)=\sum_{k=1}^{n2^n}2^{-n}\chi_{M_{n,k}}(u)=2^{-n}\lfloor2^n\min(f(u),n)\rfloor\chi_{A_n}(u),
\]
where $\lfloor y\rfloor$ is the greatest integer less than or equal to $y$. To see that the two expressions are equal, suppose the right-hand side evaluates to $k_02^{-n}>0$. Then $u\in A_n$ and $k_0\le n2^n$. Also, $u\in M_{n,k}$ if and only if $k=1,2,\dots,k_0$. So the left-hand side also evaluates to $k_02^{-n}$.

The first expression shows that each $f_n$ is of the desired form. The second shows that $f_n$ increases pointwise to $f$: Fix $u\in U$. The inequality $2\lfloor y\rfloor\le\lfloor 2y\rfloor$, $y\ge0$, shows that $f_n(u)\le f_{n+1}(u)$ and the inequality $\lfloor y\rfloor\le y<\lfloor y\rfloor+1$ shows that $f_n(u)\le
f(u)< f_n(u)+2^{-n}$ for sufficiently large $n$.
\end{proof}

We record a simple consequence of the previous lemma for future reference. 

\begin{corollary}\label{M2g} Let $f,h\in L^+(\Sigma)$ and $g\in L^\downarrow(\mathcal A)$. If $\int_Mf\,d\mu\le\int_Mh\,d\mu$ for all $M\in\mathcal M$, then $\int_Ufg\,d\mu\le\int_Uhg\,d\mu$.
\end{corollary}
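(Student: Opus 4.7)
The plan is to reduce the statement to a single linearity-plus-monotone-convergence argument, using Lemma \ref{approx} to approximate $g$ by simple functions built from sets in the enriched core $\mathcal M$. A preliminary observation handles the absence of a $\sigma$-boundedness hypothesis: since $g$ is $\sigma(\mathcal A)$-measurable and nonnegative, its support lies in $\sigma(\mathcal A_1)$ for some countable $\mathcal A_1\subseteq\mathcal A$ (as noted in the proof of Lemma \ref{3fae}). The subcore $\mathcal A_1$ is $\sigma$-bounded, and because $u\le_{\mathcal A}v$ implies $u\le_{\mathcal A_1}v$, the enriched core $\mathcal M_1$ built from $\mathcal A_1$ is contained in $\mathcal M$. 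Hence the hypothesis $\int_M f\,d\mu\le\int_M h\,d\mu$ continues to hold for every $M\in\mathcal M_1$, and we may apply Lemma \ref{approx} to $g$ viewed as a core decreasing function on this subcore.

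Applying the lemma, choose an increasing sequence of simple functions
\[
g_n=\sum_{k=1}^{K_n}\alpha_{n,k}\chi_{M_{n,k}},\qquad \alpha_{n,k}>0,\ M_{n,k}\in\mathcal M,
\]
with $g_n\uparrow g$ pointwise. For each fixed $n$, linearity of the integral together with the hypothesis applied to each $M_{n,k}\in\mathcal M$ gives
\[
\int_U fg_n\,d\mu=\sum_{k=1}^{K_n}\alpha_{n,k}\int_{M_{n,k}}f\,d\mu\le\sum_{k=1}^{K_n}\alpha_{n,k}\int_{M_{n,k}}h\,d\mu=\int_U hg_n\,d\mu.
\]

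Since $f,h\ge0$, the sequences $fg_n$ and $hg_n$ increase pointwise to $fg$ and $hg$ respectively, so the Monotone Convergence Theorem produces $\int_U fg\,d\mu\le\int_U hg\,d\mu$, as required. There is no real obstacle here; the content of the corollary is entirely carried by the approximation provided by Lemma \ref{approx}, and the only mild wrinkle is the reduction to the $\sigma$-bounded setting, which follows from standard facts about $\sigma(\mathcal A)$-measurability.
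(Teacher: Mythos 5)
Your main argument --- approximate $g$ from below by simple functions $\sum_k\alpha_k\chi_{M_k}$ with $M_k\in\mathcal M$ via Lemma \ref{approx}, apply the hypothesis to each $M_k$ by linearity, and pass to the limit with the Monotone Convergence Theorem --- is exactly the argument the paper intends (it gives no proof, presenting the corollary as ``a simple consequence of the previous lemma''), and that part is correct as written.

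The preliminary reduction, however, has a gap. Passing from $\mathcal A$ to a subcollection $\mathcal A_1$ makes the order relation \emph{coarser}: since $u\le_{\mathcal A}v$ requires an implication for every $A\in\mathcal A$ while $u\le_{\mathcal A_1}v$ requires it only for $A\in\mathcal A_1$, one has $u\le_{\mathcal A}v\Rightarrow u\le_{\mathcal A_1}v$, so more pairs become comparable under $\le_{\mathcal A_1}$. Consequently ``decreasing relative to $\mathcal A_1$'' is a \emph{stronger} condition than ``decreasing relative to $\mathcal A$'', and your assertion that $g$ may be ``viewed as a core decreasing function on this subcore'' is not a standard measurability fact; it requires proof. (For superlevel sets of $g$ of finite measure one can rerun the proof of Lemma \ref{3fae} with the countable family $\mathcal A_0$ chosen inside $\mathcal A_1$ to exhibit them as countable unions or intersections of members of $\mathcal A_1$, whence they satisfy condition \ref{rs} for $\mathcal A_1$; but that lemma does not apply to superlevel sets of infinite measure, so the claim is not justified in general.) Two clean repairs: (i) read the corollary with the $\sigma$-boundedness hypothesis inherited from Lemma \ref{approx} --- this is how the paper uses it, since it is only invoked when $\mathcal A$ is full; or (ii) note that the construction in the proof of Lemma \ref{approx} only needs an increasing sequence $A_n\in\mathcal A$ exhausting $\{g>0\}$ rather than all of $\cup\mathcal A$, and such a sequence always exists because $\{g>0\}=\cup_n\{g>1/n\}$ and each $\{g>1/n\}\in\sigma(\mathcal A)$ is contained in the union of a countable subfamily of $\mathcal A$. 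With either repair, your main computation goes through unchanged.
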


\section{Examples}\label{Examples}

Using subsets of the $\sigma$-algebra to carry the order in a measure space gives a great deal of flexibility when recognizing a class of functions that behaves like a class of decreasing functions. Here we offer a variety of examples.

To begin, we give an ordered core of Borel subsets of $\mathbb R$ that give rise to the usual order there. So any results proved for general ordered cores apply to the known cases.

\begin{example} Let $\mu$ be a Borel measure on $\mathbb R$ such that $\mu((-\oo,x])<\oo$ for each $x\in\mathbb R$. Take $\mathcal A=\{\emptyset\}\cup\{(-\oo,x]:x\in\mathbb R\}$. Then $\mathcal A$ is a full ordered core, $\sigma(\mathcal A)$ is the Borel $\sigma$-algebra, $x\le_{\mathcal A}y$ is the usual order on $\mathbb R$, and the core decreasing functions are just the usual nonnegative, nonincreasing functions. The enriched core is $\mathcal M=\{\emptyset\}\cup\{(-\oo,x), (-\oo,x]:x\in\mathbb R\}$.
\end{example}

In $\mathbb R^n$ we introduce a notion of order built on balls centred at zero, with radii in a fixed closed set. 

\begin{example} Let $U=\mathbb R^n$ with Lebesgue measure, fix a subset $S$ of $[0,\oo)$, let $B_s$ be the open ball of radius $s$ centred at zero, and take $\mathcal A=\{\emptyset\}\cup\{B_s:s\in S\}$. Then $\mathcal A$ is a $\sigma$-bounded ordered core, it is full if and only if $S$ is unbounded, $u\le_{\mathcal A}v$ if and only if $(|v|,|u|]\cap S=\emptyset$, and core decreasing functions are nonnegative, radially decreasing functions that are constant on annuli corresponding to components of the complement of $S$. 
\end{example}

In a metric space, the functions that decrease as their distance from a fixed subset increases behave like a class of decreasing functions. The structure of the class varies considerably with the choice of the fixed subset.
\begin{example} Let $\mu$ be a finite Borel measure on a metric space $U$. Fix a nonempty subset $U_0$ and take $\mathcal A=\{\emptyset, U_0\}\cup\{\{u\in U:\operatorname{dist}(u,U_0)<s\},s>0\}$. Then $\mathcal A$ is a full ordered core and $u\le_{\mathcal A}v$ depends heavily on the choice $U_0$. Core decreasing functions are those that decrease as the distance to $U_0$ increases, they will be constant on sets of fixed distance to $U_0$.  Interesting choices include taking $U_0$ be to be $(-1,1)$ on the $x$-axis when $U$ is the unit sphere in $\mathbb R^2$; taking $U_0=\mathbb Z^n$ when $U=\mathbb R^n$ with a finite measure; taking $U_0$ to be the boundary when $U$ is a Riemannian manifold with boundary.
\end{example}

The next example is really two examples. Both are based on the $\sigma$-algebra of countable and co-countable sets. Both use the total order on ordinals to give an order on elements of the set. But the choice of ordered core introduces a ``phantom point'' by including an uncountable totally ordered set that accumulates from both sides even though there is no point in the set to accumulate to. These examples show that order carried on sets can differ substantially from order carried on points and that the class of core decreasing functions can behave quite unlike typical decreasing functions.
\begin{example}\label{ordinal} If $S$ is a set, let $\sigma_{cc}(S)$ be the $\sigma$-algebra of countable subsets of $S$ together with their complements.

Let $U$ and $V$ be disjoint copies of $\omega_1$, the uncountable collection of all countable ordinals ordered by inclusion. Their smallest elements will be denoted $0_U$ and $0_V$, respectively. 
We will work in the disjoint union $U\mathbin{\dot\cup} V$ and express subsets as $E\mathbin{\dot\cup} F$, where $E\subseteq U$, $F\subseteq V$. Let 
\[
\Sigma_0=\sigma_{cc}(U\mathbin{\dot\cup} V)\quad\text{and}\quad\Sigma_1=\{E\mathbin{\dot\cup}F:E\in\sigma_{cc}(U), F\in\sigma_{cc}(V)\}.
\]
Both are $\sigma$-algebras over $U\mathbin{\dot\cup} V$ and $\Sigma_0\subseteq\Sigma_1$. Define $\mu_1$ on $\Sigma_1$ by setting 
\[
\mu_1(E\mathbin{\dot\cup} F)=\delta_{0_U}(E\mathbin{\dot\cup} F)+\delta_{0_V}(E\mathbin{\dot\cup} F)+\begin{cases}
0,&E,F\text{ countable};\\
1,&U\setminus E,F\text{ countable};\\
2,&E,V\setminus F\text{ countable};\\
3,&U\setminus E,V\setminus F\text{ countable};\\
\end{cases}
\]
and let $\mu_0$ be the restriction of $\mu_1$ to $\Sigma_0$. (Here $\delta_{0_U}$ and $\delta_{0_V}$ are Dirac measures at $0_U$ and $0_V$, respectively.) Then $\mu_0$ and $\mu _1$ are finite, complete measures on $\Sigma_0$ and $\Sigma_1$, respectively. Note that $\mu_0$ and $\mu_1$ have the same null sets, namely, the countable subsets of $U\mathbin{\dot\cup} V$ that don't include $0_U$ or $0_V$.

We introduce an ordered core that preserves the order on $U$, reverses the order on $V$, and makes every element of $U$ less than every element of $V$. For each $x\in U$ and $y\in V$, set $U_x=\{u\in U:u\prec x\}$ and $V_y=\{v\in V:v\prec y\}$. Here ``$\prec$'' is the strict order on ordinals. (Technically, $U_x=x$, $V_y=y$, and ``$\prec$'' is just ``$\subsetneq$''; the redundant notation is introduced to avoid confusion due to the definition of ordinals as sets of previous ordinals.) Note that both $U_x$ and $V_y$ are countable. Set
\[
\mathcal A=\{U_x\mathbin{\dot\cup}\emptyset:x\in U\}\cup\{U\mathbin{\dot\cup}(V\setminus V_y):y\in V\}.
\]
Then $\mathcal A$ is an ordered core of both $(U\mathbin{\dot\cup} V,\Sigma_0,\mu_0)$ and $(U\mathbin{\dot\cup} V,\Sigma_1,\mu_1)$. Note that $U\mathbin{\dot\cup} V\in\mathcal A$ so $\mathcal A$ is trivially a full ordered core. One readily shows that $\sigma(\mathcal A)=\Sigma_0$.

\begin{enumerate}[label=$\bullet$, left=0pt]
\item
The function $f=\chi_{U\mathbin{\dot\cup}\emptyset}$ is decreasing relative to $\mathcal A$. But $f$ is not $\Sigma_0$-measurable, so it is not core decreasing. So on $(U\mathbin{\dot\cup} V,\Sigma_0,\mu_0)$ with core $\mathcal A$ there is a nonmeasurable, decreasing function.
\item
The same function $f$ is $\Sigma_1$-measurable, but no function that agrees with $f$ $\mu_1$-a.e. is $\Sigma_0$-measurable. So on $(U\mathbin{\dot\cup} V,\Sigma_1,\mu_1)$ with core $\mathcal A$ there is a measurable, decreasing function that is not $\sigma(\mathcal A)$-measurable.
\end{enumerate}
Let $g=1-\chi_{\{0_V\}}$. Then $g$ is core decreasing in both measure spaces. For each $w\in U\mathbin{\dot\cup} V$, let $G(w)$ be the essential supremum of $g$ on $W=\{\bar w\in U\mathbin{\dot\cup} V:w\le_{\mathcal A}\bar w\}$. (Since $\mu_0$ and $\mu_1$ have the same null sets, the definition of $G$ is the same for both.) If $w\in U$, then $g$ takes the value $1$ on the co-countable set $(U\setminus U_w)\mathbin{\dot\cup}(V\setminus\{0_V\})$, which has positive $\mu_0$-measure and is contained in $W$. Thus $G(w)=1$. If $w\in V$, then $W=\emptyset\mathbin{\dot\cup}(\{w\}\cup V_w)$, a countable set. The value of $g$ is $1$ on $W\setminus\{0_V\}$, which has measure zero, and $0$ on $\{0_V\}$, which has measure $1$. Thus $G(w)=0$ and we have $G=f$. Note that on $\emptyset\mathbin{\dot\cup}(V\setminus\{0_V\})$, a set of positive $\mu_1$-measure, $f=0$ and $g=1$.
\begin{enumerate}[resume, label=$\bullet$, left=0pt]
\item So the essential supremum construction analogous to the one used to define the least decreasing majorant in Section \ref{old}, when applied to the core decreasing function $g$, produces the $\mu_0$-nonmeasurable function $f$. The function $f$ is $\mu_1$-measurable, but it is not core decreasing and it is not a majorant of $g$.
\end{enumerate}
See Lemma \ref{lcdm} for a different approach to proving the existence of a least core decreasing majorant.
\end{example}

In the previous examples, the core has been based on the structure of the underlying measure space. Here is an example, on a general measure space, where the ordered core is defined in terms of a single fixed function. Building an ordered core in this way recovers the notion of ``similarly ordered'' functions and may be used to extend that notion into spaces of functions and interpolation of operators. 
\begin{example} Let $(U,\Sigma,\mu)$ be a $\sigma$-finite measure space and fix an integrable $g\in L^+(\Sigma)$. Set $\mathcal A=\{\emptyset\}\cup\{\{u\in U:g(u)> s\}:s>0\}$. Then $u\le_{\mathcal A}v$ means $g(u)\ge g(v)$ and $f\in L^\downarrow(\mathcal A)$ means that $f$ is nonnegative and for all $u,v$, $f(u)\ge f(v)$ if and only if $g(u)\ge g(v)$. Since $g$ is automatically core decreasing, we always have the $D$-type H\"older's inequality mentioned in the introduction. 
\end{example}

\section{A Tailored Measure on the Half Line}\label{tailor}

Let $(U,\Sigma,\mu)$ be a $\sigma$-finite measure space with a full ordered core $\mathcal A$ and let $\mathcal M$ be the enriched core of Theorem \ref{M}. We will use the results of Section \ref{Core Maps} to establish a two-way relationship between functions on $U$ and functions on $[0,\oo)$. This will enable us to take advantage of the familiar order and known results for decreasing functions in that setting. 

Let $\mathcal B=\{\emptyset\}\cup\{[0,x]:x\ge 0\}$. Then $\sigma(\mathcal B)$ is the Borel $\sigma$-algebra. We will construct a measure $\lambda$ on $[0,\oo)$ such that $\lambda([0,x])<\oo$ for each $x>0$ and $\mathcal B$ is a full ordered core on $([0,\oo),\sigma(\mathcal B),\lambda)$. Clearly, $\le_{\mathcal B}$ is the usual order on $[0,\oo)$. 

Let $\Gamma$ be the closure in $[0,\oo)$ of $\mu(\mathcal A)$. Then $\Gamma=\mu(\mathcal M)$ by Theorem \ref{M}. Note that $0\in\Gamma$. Let
\[
a(x)=\sup([0,x]\cap\Gamma)\quad\mbox{and}\quad
b(x)=\inf([x,\oo)\cap\Gamma),
\]
where the infimum of the empty set is taken to be $\oo$. Evidently, $a$ and $b$ are nondecreasing on $[0,\oo)$, $a(x)\le x\le b(x)$, and $a(x)=x=b(x)$ when $x\in\Gamma$. Also, if $x\notin\Gamma$ then $(a(x),b(x))$ is the connected component of the complement of $\Gamma$ that contains $x$. In particular, $b(x)=\oo$ if and only if $x>\sup \Gamma$. Let $\lambda$ denote the Lebesgue-Stieltjes measure associated to the nondecreasing function $b^{-1}$, the generalized inverse of $b$. If $\varphi\in L^+(\mathcal B)$, then
\begin{equation}\label{lam}
\int_{[0,\oo)}\varphi\,d\lambda=\int_0^{\sup\Gamma}\varphi(b(x))\,dx=\int_\Gamma \varphi(x)\,dx+\sum(b-a)\varphi(b),
\end{equation}
where the sum is taken over all bounded components $(a,b)$ of the complement of $\Gamma$.

\begin{lemma}\label{lambda}  The measure $\lambda$ is $\sigma$-finite and supported on $\Gamma$. 
If $x\ge0$, then $\lambda([0,x])=a(x)\in\Gamma$. In particular, $\lambda([0,x])=x$ if and only if $x\in\Gamma$ and $\mathcal B$ is a full ordered core on $([0,\oo),\sigma(\mathcal B),\lambda)$.
\end{lemma}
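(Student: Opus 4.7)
The plan is to apply formula \eqref{lam} with $\varphi=\chi_{[0,x]}$ to compute $\lambda([0,x])$ directly; once this is in hand, essentially every claim in the lemma follows quickly. The first form of \eqref{lam} reduces the problem to computing the Lebesgue measure of the set $\{y\in[0,\sup\Gamma):b(y)\le x\}$, so the central step is to identify this set (up to Lebesgue-null differences) as $[0,a(x)]$.

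The key equivalence to establish is $b(y)\le x\iff y\le a(x)$, which I would prove by cases. For the forward direction: if $a(x)<y\le x$ then $[y,x]\cap\Gamma=\emptyset$ by the very definition of $a(x)$ as $\sup([0,x]\cap\Gamma)$, which forces $b(y)>x$; and if $y>x$ then trivially $b(y)\ge y>x$. For the reverse direction, closedness of $\Gamma$ together with $0\in[0,x]\cap\Gamma$ places the supremum $a(x)$ in $\Gamma$ (proving the assertion $a(x)\in\Gamma$ along the way); then $y\le a(x)\le x$ and $a(x)\in\Gamma$ give $b(y)\le a(x)\le x$. This yields $\lambda([0,x])=a(x)$, and the equivalence $\lambda([0,x])=x\iff x\in\Gamma$ is immediate since $a(x)=x$ precisely when $x\in\Gamma$.

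With this formula in hand, $\sigma$-finiteness follows from $[0,\oo)=\bigcup_n[0,n]$ and $\lambda([0,n])=a(n)\le n<\oo$. To show $\lambda$ is supported on $\Gamma$, I would apply the second form of \eqref{lam} to the characteristic function of each connected component of $[0,\oo)\setminus\Gamma$: for a bounded component $(\alpha,\beta)$, the $\Gamma$-integral vanishes because $(\alpha,\beta)\cap\Gamma=\emptyset$, and the sum vanishes because the right endpoint $\beta$ of any bounded component of $\Gamma^c$ sits in $\Gamma$, hence outside $(\alpha,\beta)$; the unbounded component $(\sup\Gamma,\oo)$, if any, is dispatched directly by the first form, since $b(y)\le\sup\Gamma$ for $y<\sup\Gamma$. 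The full-ordered-core properties of $\mathcal B$ on $([0,\oo),\sigma(\mathcal B),\lambda)$ are then routine once $\lambda([0,x])=a(x)\le x<\oo$ is known: total ordering by inclusion, containing $\emptyset$, $\sigma$-boundedness via $\{[0,n]\}_n$, and $\bigcup\mathcal B=[0,\oo)$ are all immediate. The main obstacle is the careful bookkeeping in the case analysis for the key equivalence $b(y)\le x\iff y\le a(x)$; everything else flows directly from \eqref{lam} and closedness of $\Gamma$.
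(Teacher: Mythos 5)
Your proposal is correct and follows essentially the same route as the paper's proof: both compute $\lambda([0,x])$ from \eqref{lam} by establishing the equivalence $b(y)\le x\iff y\le a(x)$ (using $a(x)\in\Gamma$, which holds since $[0,x]\cap\Gamma$ is nonempty, closed and bounded), and both read off $\sigma$-finiteness, the support claim via the third expression in \eqref{lam}, and the full-ordered-core property as immediate consequences. Your case analysis for the key equivalence and your component-by-component verification of the support claim are just slightly more explicit versions of what the paper does.
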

\begin{proof} For each $x\ge0$ and each $z\le\sup\Gamma$, $a(x),b(z)\in\Gamma$ so $b(z)\le x$ if and only if $b(z)\le a(x)$ if and only if $z\le a(x)$. Therefore, $\chi_{[0,x]}(b(z))=\chi_{[0,a(x)]}(z)$ and we get
\[
\lambda([0,x])=\int_{[0,\oo)}\chi_{[0,x]}\,d\lambda
=\int_0^{\sup\Gamma}\chi_{[0,x]}\circ b
=\int_0^{\sup\Gamma}\chi_{[0,a(x)]}=a(x),
\]
which equals $x$ if and only if $x\in\Gamma$. It follows that $\lambda$ is $\sigma$-finite and $\mathcal B$ is a full ordered core. The third expression in \eqref{lam} shows that $\lambda$ is supported on $\Gamma$.
\end{proof}

Now we apply Theorem \ref{coremap} twice to make connections in both direction between $\mu$-measurable functions on $U$ and $\lambda$-measurable functions on $[0,\oo)$. 

\begin{proposition}\label{R} Let $V(\Sigma)$ be the vector space of all $f\in L(\Sigma)$ such that $f$ is $\mu$-integrable on $M$ for each $M\in \mathcal M$. Then there is a map $R$ defined on $L^+(\Sigma)\cup V(\Sigma)$ such that for all $f\in L^+(\Sigma)\cup V(\Sigma)$:
\begin{enumerate}[label=(\alph*)]
\item\label{Ra} $Rf\in L(\mathcal B)$;
\item\label{Rb} if $f\in L^+(\Sigma)$ then $Rf\in L^+(\mathcal B)$;
\item\label{Rc} if $f_n\in L^+(\Sigma)$ for each $n$ and $f_n\uparrow f$ $\mu$-a.e., then $Rf_n\uparrow Rf$ $\lambda$-a.e.;
\item\label{Rd} $R$ is linear on $V(\Sigma)$ and it is additive and positive homogeneous on $L^+(\Sigma)$;
\item\label{Re} $|Rf|\le R|f|$ $\lambda$-a.e.;
\item\label{Rf} if $x\ge0$, $M\in\mathcal M$, and $\mu(M)=\lambda([0,x])$, then
\[
\int_{[0,x]}Rf\,d\lambda=\int_Mf\,d\mu;
\]
\item\label{Rg} if $f\in L^1_\mu$, then $R f\in L^1_\lambda$ and $\|R f\|_{L^1_\lambda}\le\|f\|_{L^1_\mu}$; 
\item\label{Rh} if $f\in L^\infty_\mu$, then $R f\in L^\oo_\lambda$ and $\|R f\|_{L^\infty_\lambda}\le \|f\|_{L^\infty_\mu}$;
\item\label{Ri} if $M\in\mathcal M$, then $R\chi_M=\chi_{[0,\mu(M)]}$ $\lambda$-a.e.;
\item\label{Rj} if $f\in L^+(\Sigma)$ and $g\in L^+(\mathcal A)$, then $R(fg)=RfRg$ $\lambda$-a.e. 
\end{enumerate}
\end{proposition}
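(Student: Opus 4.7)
The plan is to derive Proposition \ref{R} as an application of Theorem \ref{coremap}, taking $(P,\mathcal P,\rho)=([0,\oo),\sigma(\mathcal B),\lambda)$ with ordered core $\mathcal B$, and $(T,\mathcal T,\tau)=(U,\Sigma,\mu)$. The core $\mathcal B$ is $\sigma$-bounded since $[0,\oo)=\cup_n[0,n]$, so Theorem \ref{coremap} will supply the required map $R$ once we construct an order-preserving $r:\mathcal B\to\Sigma$ satisfying \eqref{bdd}.

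The key construction is to send each $[0,x]\in\mathcal B$ to a set $M_x\in\mathcal M$ with $\mu(M_x)=a(x)=\lambda([0,x])$. Since $a(x)\in\Gamma=\mu(\mathcal M)$ by Theorem \ref{M}, such an $M_x$ exists; concretely, one may choose an increasing sequence $A_n\in\mathcal A$ with $\mu(A_n)\uparrow a(x)$ and take $M_x=\cup_nA_n$, which lies in $\mathcal M$ by Lemma \ref{3fae}. Setting $r(\emptyset)=\emptyset$ and $r([0,x])=M_x$, the total ordering of $\mathcal M$ forces $M_x\subseteq M_y$ whenever $x\le y$, since the reverse inclusion would contradict $a(x)\le a(y)$. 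Thus $r$ is order-preserving, $r(\emptyset)=\emptyset$, and $\mu(r(B)\setminus r(A))=\lambda(B\setminus A)$ for all $A,B\in\mathcal B$, which verifies \eqref{bdd} with $c=1$ and in fact gives $\mu(r(A))=\lambda(A)$ throughout $\mathcal B$.

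With $R$ obtained from Theorem \ref{coremap}, most of the listed properties transfer directly: \ref{Ra}, \ref{Rb}, \ref{Rc}, \ref{Rd}, \ref{Re}, \ref{Rg}, \ref{Rh} correspond verbatim to items \ref{a}, \ref{b}, \ref{c}, \ref{d}, \ref{e}, \ref{g}, \ref{h} of Theorem \ref{coremap} (the constant $c=1$ giving the sharp bound in \ref{Rh}). The vector space produced by the theorem is $V(\Sigma)$, because each $M\in\mathcal M$ lies in the totally ordered chain $\mathcal M$ with some $M_x$ of equal measure, and two such elements differ by a $\mu$-null set, so integrability on every $M_x$ is equivalent to integrability on every $M\in\mathcal M$. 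For \ref{Rf}, Theorem \ref{coremap}\ref{f} with $B=[0,x]$ and $A=\emptyset$ gives $\int_{[0,x]}Rf\,d\lambda=\int_{M_x}f\,d\mu$, and any other $M\in\mathcal M$ with $\mu(M)=\lambda([0,x])$ may be substituted for $M_x$ since the two sets differ by a null set. For \ref{Ri}, Theorem \ref{coremap}\ref{i} yields $R\chi_{M_x}=\chi_{[0,x]}$ $\lambda$-a.e.; because $\lambda$ is supported on $\Gamma$ (Lemma \ref{lambda}), $\chi_{[0,x]}=\chi_{[0,a(x)]}=\chi_{[0,\mu(M)]}$ $\lambda$-a.e., while $\chi_M=\chi_{M_x}$ $\mu$-a.e.\ forces $R\chi_M=R\chi_{M_x}$ $\lambda$-a.e.

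The main obstacle is \ref{Rj}. Theorem \ref{coremap}\ref{j} delivers multiplicativity only for $g\in L^+(r(\mathcal B))$, whereas \ref{Rj} demands $g\in L^+(\mathcal A)$. The bridge is that $\sigma(r(\mathcal B))$ and $\sigma(\mathcal A)$ coincide modulo $\mu$-null sets: every $A\in\mathcal A$ has $\mu(A)\in\Gamma$ and hence differs by a null set from $M_{\mu(A)}\in r(\mathcal B)$, so $\mathcal A$ lies in the $\mu$-completion of $\sigma(r(\mathcal B))$, and therefore so does $\sigma(\mathcal A)$. Consequently, any $g\in L^+(\mathcal A)$ equals $\mu$-a.e.\ some $g'\in L^+(r(\mathcal B))$; since $R$ respects $\mu$-a.e.\ equality (an immediate consequence of \ref{Rf} applied to the nonnegative and nonpositive parts of the difference), we obtain $R(fg)=R(fg')=RfRg'=RfRg$ $\lambda$-a.e., completing the proof.
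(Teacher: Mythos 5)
Your proposal is correct and follows essentially the same route as the paper: both apply Theorem \ref{coremap} with $(P,\mathcal P,\rho)=([0,\oo),\sigma(\mathcal B),\lambda)$, $(T,\mathcal T,\tau)=(U,\Sigma,\mu)$ and $r([0,x])=M_{a(x)}$, and both handle \ref{Rf} and \ref{Ri} by observing that two sets in $\mathcal M$ of equal measure differ by a $\mu$-null set. Your explicit treatment of \ref{Rj} (passing from $L^+(r(\mathcal B))$ to $L^+(\mathcal A)$ modulo null sets) fills a step the paper leaves implicit; the only minor slip is the parenthetical claim that $M_x$ can always be obtained as $\cup_n A_n$ with $\mu(A_n)\uparrow a(x)$ --- when $a(x)$ is approached in $\mu(\mathcal A)$ only from above one needs $\cap_n A_n$ instead --- but the existence of $M_x$ already follows from $\mu(\mathcal M)=\Gamma$, as you note.
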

\begin{proof} For each $x\in\Gamma$, choose $M_x\in\mathcal M$ such that $\mu(M_x)=x$.
In Theorem \ref{coremap}, take $(P,\mathcal P,\rho)$ to be $([0,\oo),\sigma(\mathcal B),\lambda)$ with the core $\mathcal B$ and take $(T,\mathcal T,\tau)$ to be $(U,\Sigma,\mu)$. The map $r:\mathcal B\to\Sigma$ is defined by $r(\emptyset)=\emptyset$ and $r([0,x])=M_{a(x)}$. Since $\mu(r(\emptyset))=0$ and $\mu(r([0,x]))=a(x)=\lambda([0,x])<\oo$, the map $r$ satisfies \eqref{bdd} with equality and with $c=1$. 

Parts \ref{Ra}--\ref{Rj} follow directly from the corresponding conclusions of Theorem \ref{coremap}. Only \ref{Rf} and \ref{Ri} require comment. A direct translation of Theorem \ref{coremap}\ref{f}, taking $A=\emptyset$, would be \ref{Rf} above, but with $M_{a(x)}$ in place of $M$. However, $\mu(M_{a(x)})=a(x)=\mu(M)$ so the total ordering of $\mathcal M$ implies that $M$ and $M_{a(x)}$ differ by a set of $\mu$-measure zero. The same issue arises in \ref{Ri}, where a direct translation of Theorem \ref{coremap}\ref{i}, taking $A=[0,\mu(M)]$, gives $R\chi_{M_{a(\mu(M))}}=\chi_{[0,\mu(M)]}$ $\lambda$-a.e. We may replace $M_{a(\mu(M))}$ by $M$ since the two differ by a set of $\mu$-measure zero.
\end{proof}

\begin{proposition}\label{Q}  Let $V(\mathcal B)$ be the vector space of all $\varphi\in L(\mathcal B)$ such that $\varphi$ is $\lambda$-integrable on $[0,x]$ for each $x\ge0$. Then there is a map $Q$ defined on $L^+(\mathcal B)\cup V(\mathcal B)$ such that for all $\varphi\in L^+(\mathcal B)\cup V(\mathcal B)$:
\begin{enumerate}[label=(\alph*)]
\item\label{Qa} $Q\varphi\in L(\mathcal A)$;
\item\label{Qb} if $\varphi\in L^+(\mathcal B)$ then $Q\varphi\in L^+(\mathcal A)$;
\item\label{Qc} if $\varphi_n\in L^+(\mathcal B)$ for each $n$ and $\varphi_n\uparrow \varphi$ $\lambda$-a.e., then $Q\varphi_n\uparrow Q\varphi$ $\mu$-a.e.;
\item\label{Qd} $Q$ is linear on $V(\mathcal B)$ and it is additive and positive homogeneous on $L^+(\mathcal B)$;
\item\label{Qe} $|Q\varphi|\le Q|\varphi|$ $\mu$-a.e.;
\item\label{Qf} for all $M\in\mathcal M$,
\[
\int_MQ\varphi\,d\mu=\int_{[0,\mu(M)]}\varphi\,d\lambda;
\]
\item\label{Qg} if $\varphi\in L^1_\lambda$, then $Q\varphi\in L^1_\mu$ and $\|Q\varphi\|_{L^1_\mu}\le\|\varphi\|_{L^1_\lambda}$; 
\item\label{Qh} if $\varphi\in L^\infty_\lambda$, then $Q\varphi\in L^\oo_\mu$ and $\|Q\varphi\|_{L^\infty_\mu}\le \|\varphi\|_{L^\infty_\lambda}$;
\item\label{Qi} if $M\in\mathcal M$, then $Q\chi_{[0,\mu(M)]}=\chi_M$ $\mu$-a.e.;
\item\label{Qj} if $\varphi\in L^+(\mathcal B)$ and $\psi\in L^+(\mathcal B)$, then $Q(\varphi\psi)=Q\varphi Q\psi$ $\mu$-a.e. 
\end{enumerate}
\end{proposition}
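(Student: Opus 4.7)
The plan is to apply Theorem~\ref{coremap} in the direction opposite to that used in Proposition~\ref{R}. Specifically, I would take $(P,\mathcal P,\rho)=(U,\Sigma,\mu)$ carrying the enriched core $\mathcal M$ (which is $\sigma$-bounded by Theorem~\ref{M}, since $\mathcal A$ is full) and $(T,\mathcal T,\tau)=([0,\oo),\sigma(\mathcal B),\lambda)$. Define $r\colon\mathcal M\to\sigma(\mathcal B)$ by $r(\emptyset)=\emptyset$ and $r(M)=[0,\mu(M)]$ for nonempty $M$. Since $\mu(M)\in\Gamma$, Lemma~\ref{lambda} gives $\lambda(r(M))=\mu(M)$. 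The map $r$ is order-preserving because $\mu$ is monotone along the totally ordered chain $\mathcal M$; using that total ordering to reduce to nested pairs $M\subseteq N$, one computes $\lambda(r(N)\setminus r(M))=\mu(N)-\mu(M)=\mu(N\setminus M)$, so \eqref{bdd} holds with $c=1$ and with equality.

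With this setup, parts \ref{Qa}--\ref{Qh} transcribe directly from the corresponding conclusions of Theorem~\ref{coremap}. The vector space $V$ produced by that theorem coincides with $V(\mathcal B)$, because $\lambda$ is supported on $\Gamma$ and $\lambda([0,x]\setminus[0,a(x)])=0$ for every $x\ge 0$, so integrability on each $r(M)$ is the same as integrability on each $[0,x]$. Part \ref{Qf} is the specialization of Theorem~\ref{coremap}\ref{f} with $A=\emptyset$, and part \ref{Qi} follows from Theorem~\ref{coremap}\ref{i} since $\mu(M)=\lambda(r(M))$.

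The main obstacle is \ref{Qj}. Theorem~\ref{coremap}\ref{j} only delivers the product formula when $\psi$ is $\sigma(r(\mathcal M))$-measurable, whereas \ref{Qj} requires it for every $\psi\in L^+(\mathcal B)$. These two classes need not coincide pointwise (for instance when $\Gamma$ has isolated points), but they agree modulo $\lambda$-null sets, because $\lambda$ is supported on $\Gamma$ and the traces $\{[0,x]\cap\Gamma:x\in\Gamma\}$ generate the Borel $\sigma$-algebra on $\Gamma$. To finish \ref{Qj}, I would first use \ref{Qf} together with the $\sigma$-boundedness of $\mathcal M$ to show that $Q$ respects $\lambda$-a.e.\ equivalence on $L^+(\mathcal B)$; then, for arbitrary $\psi\in L^+(\mathcal B)$, produce a $\psi'\in L^+(r(\mathcal M))$ with $\psi=\psi'$ $\lambda$-a.e.\ (first on characteristic functions of Borel sets, then by positive linear combinations and monotone convergence); and finally apply Theorem~\ref{coremap}\ref{j} to the pair $(\varphi,\psi')$ and transfer the identity from $(\varphi,\psi')$ back to $(\varphi,\psi)$ using $\lambda$-a.e.\ equivalence.
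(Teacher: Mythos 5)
Your proposal is correct and follows essentially the same route as the paper: both apply Theorem~\ref{coremap} with $(U,\Sigma,\mu)$ carrying the enriched core $\mathcal M$ as the target, the map $M\mapsto[0,\mu(M)]$ satisfying \eqref{bdd} with equality and $c=1$, and both resolve \ref{Qj} by observing that $\lambda$ is supported on $\Gamma$ so every $\psi\in L^+(\mathcal B)$ agrees $\lambda$-a.e.\ with a function measurable with respect to the $\sigma$-algebra generated by the image of the core map. Your treatment of \ref{Qj} is slightly more explicit than the paper's, but the underlying idea is identical.
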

\begin{proof}
In Theorem \ref{coremap}, take $(P,\mathcal P,\rho)$ to be $(U,\Sigma,\mu)$ with core $\mathcal M$ and take $(T,\mathcal T,\tau)$ to be $([0,\oo),\sigma(\mathcal B),\lambda)$. Replace $r$ by the map $q:\mathcal M\to\sigma(\mathcal B)$, defined by $q(\emptyset)=\emptyset$ and $q(M)=[0,\mu(M)]$ otherwise. Since $\lambda(q(\emptyset))=0$ and $\lambda(q(M))=a(\mu(M))=\mu(M)$ when $\emptyset\neq M\in\mathcal M$, the map $q$ satisfies \eqref{bdd} with equality and with $c=1$. 

Parts \ref{Qa}--\ref{Qj} follow directly from the corresponding conclusions of Theorem \ref{coremap}. Only \ref{Qj} requires comment. Translating directly from Theorem \ref{coremap}\ref{j} we get the statement of \ref{Qj} but only under the condition that $\psi$ be measurable in the $\sigma$-algebra generated by $\{[0,x]:x\in\Gamma\}$. This means that $\psi$ must be constant on every component of the complement of $\Gamma$. But $\lambda$ is supported on $\Gamma$ so every $\psi\in L(\mathcal B)$ is equal $\lambda$-a.e. to one that is constant on every component of the complement of $\Gamma$. 
\end{proof}

The next result explores the close connections between the maps $R$ and $Q$.
\begin{theorem}\label{QR} Let $V(\Sigma)$, $V(\mathcal B)$, $R$ and $Q$ be as in Propositions \ref{R} and \ref{Q}. Then:
\begin{enumerate}[label=(\alph*)]
\item\label{QRa} If $\varphi\in L^+(\mathcal B)\cup V(\mathcal B)$, then $RQ\varphi=\varphi$ $\lambda$-a.e.;
\item\label{QRb} If $f\in L^+(\mathcal A)\cup(L(\mathcal A)\cap V(\Sigma))$, then $QRf=f$ $\mu$-a.e.;
\item\label{QRc} If $f\in L^+(\Sigma)$, $\varphi\in L^+(\mathcal B)$ and $M\in\mathcal M$, then 
\[
\int_Mf(Q\varphi)\,d\mu=\int_{[0,\mu(M)]}(Rf)\varphi\,d\lambda\quad\text{and}\quad
\int_Uf(Q\varphi)\,d\mu=\int_{[0,\oo)}(Rf)\varphi\,d\lambda;
\]
\item\label{QRd} If $\varphi\in L^\downarrow(\mathcal B)$, then $Q\varphi\in L^\downarrow(\mathcal A)$ and $\varphi^*=(Q\varphi)^*$;
\item\label{QRe} If $f\in L^\downarrow(\mathcal A)$, then $Rf\in L^\downarrow(\mathcal B)$ and $f^*=(Rf)^*$;
\item\label{QRf} If $g\in L^\downarrow(\mathcal A)$, then
\begin{multline*}
\bigg\{h\in L^\downarrow(\mathcal A):\int_Mh\,d\mu\le\int_Mg\,d\mu\text{ for all }M\in\mathcal M\bigg\}
\\=\bigg\{Q\psi:\psi\in L^\downarrow(\mathcal B), \int_{[0,x]}\psi\,d\lambda\le\int_{[0,x]}Rg\,d\lambda\text{ for all }x\ge0\bigg\}.
\end{multline*}
\end{enumerate}
\end{theorem}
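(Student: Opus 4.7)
The overall strategy is to leverage the integration identities \ref{Rf} of Proposition \ref{R} and \ref{Qf} of Proposition \ref{Q}, which pair integrals over $M \in \mathcal M$ with integrals over $[0, \mu(M)]$, together with Lemma \ref{lambda}'s facts $\lambda([0,x]) = a(x)$ and $\lambda$ is supported on $\Gamma = \mu(\mathcal M)$. With these, \ref{QRa} and \ref{QRb} will say $R$ and $Q$ are mutual inverses; \ref{QRc} then falls out of the multiplicativity \ref{Rj} combined with \ref{QRa}; \ref{QRd} and \ref{QRe} are handled by approximation via Lemma \ref{approx} together with the explicit formulas \ref{Ri}, \ref{Qi}; and \ref{QRf} is an immediate corollary.

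For \ref{QRa}, apply \ref{Rf} to $Q\varphi$ and then \ref{Qf}: for each $x \ge 0$, pick $M \in \mathcal M$ with $\mu(M) = a(x)$ (possible since $a(x) \in \Gamma$), and chain
\[
\int_{[0,x]} RQ\varphi \, d\lambda = \int_M Q\varphi \, d\mu = \int_{[0, a(x)]} \varphi \, d\lambda = \int_{[0,x]} \varphi \, d\lambda,
\]
where the last equality uses that $\lambda((a(x), x]) = 0$ ($\lambda$ is supported on $\Gamma$ and has no atom at $x \notin \Gamma$). Uniqueness of $\sigma$-finite measures agreeing on the $\pi$-system $\{[0,x] : x \ge 0\}$ then concludes, after a preliminary truncation $\varphi_N = \min(\varphi, N) \chi_{[0,N]}$ if needed to ensure $\sigma$-finiteness; \ref{Rc}, \ref{Qc} lift the a.e.\ equality back to $\varphi$. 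Part \ref{QRb} is mirror: \ref{Qf} then \ref{Rf} give $\int_M QRf \, d\mu = \int_M f \, d\mu$ for every $M \in \mathcal M$, and $\mathcal M$ is a $\pi$-system with $\sigma(\mathcal M) = \sigma(\mathcal A)$ (Theorem \ref{M}) containing an $\mathcal A$-exhausting sequence, so uniqueness concludes. For \ref{QRc}, \ref{Qb} puts $Q\varphi$ in $L^+(\mathcal A)$, so \ref{Rj} combined with \ref{QRa} gives $R(fQ\varphi) = Rf \cdot \varphi$ $\lambda$-a.e.; \ref{Rf} then delivers the first identity, and the second follows by monotone convergence with an exhausting $A_n \uparrow U$ from $\mathcal A$.

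For \ref{QRe}, approximate $f \in L^\downarrow(\mathcal A)$ by simple $f_n = \sum_k \alpha_k^n \chi_{M_k^n}$ with $M_k^n \in \mathcal M$ via Lemma \ref{approx}. Then \ref{Rd} and \ref{Ri} give $Rf_n = \sum_k \alpha_k^n \chi_{[0, \mu(M_k^n)]}$, which is nonincreasing; \ref{Rc} gives $Rf_n \uparrow Rf$, so $Rf \in L^\downarrow(\mathcal B)$. The total ordering of $\mathcal M$ nests the $M_k^n$ to match the nesting of the intervals $[0, \mu(M_k^n)]$, with identical measures, whence the distribution functions $\mu_{f_n}$ and $\lambda_{Rf_n}$ coincide; continuity of measure from below gives $\mu_{f_n}(\delta) \uparrow \mu_f(\delta)$ and $\lambda_{Rf_n}(\delta) \uparrow \lambda_{Rf}(\delta)$, so $f^* = (Rf)^*$. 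Part \ref{QRd} is parallel, approximating $\varphi$ by $\sum_k \alpha_k \chi_{[0, x_k]}$ with $x_k \in \Gamma$ (allowed since $\lambda$ is supported on $\Gamma$) and using \ref{Qi}. Finally, for \ref{QRf}: for $\subseteq$, set $\psi := Rh$, which lies in $L^\downarrow(\mathcal B)$ by \ref{QRe} and satisfies $Q\psi = h$ by \ref{QRb}, with the integral inequality translated via \ref{Rf}; for $\supseteq$, set $h := Q\psi$ (in $L^\downarrow(\mathcal A)$ by \ref{QRd}) and translate via \ref{Qf} then \ref{Rf}.

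The main obstacle I anticipate is the equimeasurability step in \ref{QRd} and \ref{QRe}. The simple-function reduction is clean on the $\mathcal A$ side since $\mu(M_k^n) \in \Gamma$ automatically, but on the $\mathcal B$ side one must arrange the approximating interval endpoints $x_k$ to lie in $\Gamma$ so that $\lambda([0, x_k]) = x_k$ matches $\mu(M_{x_k})$; the $\Gamma$-support of $\lambda$ makes this possible, but care is needed near right-endpoints of gaps of $\Gamma$, where $\lambda$ carries atoms whose sizes $b(x) - a(x)$ must be correctly assigned to a single point on the $\mathcal B$ side but distributed over a full set of positive $\mu$-measure on the $\mathcal A$ side.
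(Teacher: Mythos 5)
Your proposal is correct and follows essentially the same route as the paper: reduce (a) and (b) to the integrable case and compare the set functions on $\{[0,x]\}$ resp.\ $\mathcal M$ via \ref{Rf} and \ref{Qf} (using $\lambda((a(x),x])=0$), derive (c) from \ref{Rj} plus (a), prove (d) and (e) by the simple-function approximations of Lemma \ref{approx} together with \ref{Ri}/\ref{Qi} and the matching of level-set measures, and obtain (f) by the substitutions $\psi=Rh$ and $h=Q\psi$. The one caveat you raise about atoms of $\lambda$ at right endpoints of gaps of $\Gamma$ is resolved exactly as you suggest (replacing $x_k$ by $a(x_k)$ changes nothing $\lambda$-a.e.), which is also how the paper handles it.
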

\begin{proof} Recall that if $x\in\Gamma$, $M_x$ is a element of $\mathcal M$ of measure $x$. As in the proofs of Propositions \ref{R} and \ref{Q}, let $r(\emptyset)=\emptyset$, $r([0,x])=M_{a(x)}$, $q(\emptyset)=\emptyset$ and $q(M)=[0,\mu(M)]$ otherwise.

In view of Propositions \ref{R}\ref{Rc} and \ref{Q}\ref{Qc}, it suffices to prove \ref{QRa} for $\varphi\in V(\mathcal B)$. In that case, we may apply Propositions \ref{R}\ref{Rf} and \ref{Q}\ref{Qf}, to get, for all $x\ge0$,
\[
\int_{[0,x]}RQ\varphi\,d\lambda=\int_{M_{a(x)}}Q\varphi\,d\mu=\int_{[0,a(x)]}\varphi\,d\lambda=\int_{[0,x]}\varphi\,d\lambda,
\]
where the last equation relies on Lemma \ref{lambda}, which implies that $\lambda((a(x),x])=0$ when $a(x)<x$. Since $\varphi\in V(\mathcal B)$, each of the above integrals converges. Since the sets $[0,x]$, for $x\ge0$, generate $\sigma(\mathcal B)$, $RQ\varphi=\varphi$ $\lambda$-a.e.

The argument for \ref{QRb} is somewhat similar. In view of Propositions \ref{R}\ref{Rc} and \ref{Q}\ref{Qc}, it suffices to prove \ref{QRb} for $f\in L(\mathcal A)\cap V(\Sigma)$. In that case, since $r([0,\mu(M)])=M_{a(\mu(M))}=M_{\mu(M)}$, by the definition of $a$, we may apply Propositions \ref{R}\ref{Rf} and \ref{Q}\ref{Qf}, to get, for all $M\in\mathcal M$,
\begin{equation}\label{QRflip}
\int_MQRf\,d\mu=\int_{[0,\mu(M)]}Rf\,d\lambda=\int_{M_{\mu(M)}}f\,d\mu=\int_Mf\,d\mu,
\end{equation}
where the last equation relies on $\mu(M_{\mu(M)})=\mu(M)$ and the total order on $\mathcal M$, which together imply that $M_{\mu(M)}$ and $M$ differ by a set of $\mu$-measure zero. Since $f\in V(\Sigma)$, each of the above integrals converges. Both $f$ and $QRf$ are $\sigma(\mathcal A)$-measurable and the set of $M\in\mathcal M$ generate $\sigma(\mathcal A)$ so $QRf=f$ $\mu$-a.e.

To prove \ref{QRc}, let $f\in L^+(\mathcal A)$, $\varphi\in L^+(\mathcal B)$ and $M\in\mathcal M$. As we have seen, $M_{\mu(M)}$ and $M$ differ by a set of $\mu$-measure zero. Also, Proposition \ref{Q}\ref{Qb} shows that $Q\varphi\in L^+(\mathcal A)$ so, applying \ref{Rf} and \ref{Rj} of Proposition \ref{R} followed by \ref{QRa} above, we get
\[
\int_MfQ\varphi\,d\mu
=\int_{[0,\mu(M)]}R(fQ\varphi)\,d\lambda
=\int_{[0,\mu(M)]}(Rf)RQ\varphi\,d\lambda
=\int_{[0,\mu(M)]}(Rf)\varphi\,d\lambda.
\]
To get the second statement of \ref{QRc}, recall that $\mathcal A$ is a full ordered core so $U$ is the union of $A_1\subseteq A_2\subseteq\dots$ with $A_n\in\mathcal A\subseteq \mathcal M$. For such a sequence $[0,\mu(A_n)]$ increases to $[0,\sup\Gamma]$ or to $[0,\oo)$. Since $\lambda$ is supported on $\Gamma$, the monotone convergence theorem implies
\[
\int_UfQ\varphi\,d\mu=\int_{[0,\oo)}(Rf)\varphi\,d\lambda.
\]

Proposition 2.1.7 of \cite{BS} shows that if $0\le\varphi_n\uparrow\varphi$ $\lambda$-a.e., then $0\le\varphi_n^*\uparrow\varphi^*$. Every function in $L^+(\mathcal B)$ is nonnegative and nonincreasing on $[0,\oo)$ so it can be expressed as the limit of functions of the form $\varphi=\sum_{k=1}^K\alpha_k\chi_{[0,x_k]}$ where $\alpha_k>0$ and $x_k\ge0$ for $k=1,\dots,K$. Therefore, it suffices to prove \ref{QRd} for this simple function $\varphi$. Without loss of generality, assume $x_1\ge x_2\ge\dots\ge x_K\ge x_{K+1}=0$. Since $\lambda((a(x),x])=0$ when $a(x)<x$, $\chi_{[0,x_k]}=\chi_{[0,a(x_k)]}=\chi_{[0,\mu(M_{a(x_k)})]}$ $\lambda$-a.e. for each $k=1,\dots,K$. Since $Q$ is additive, Proposition \ref{Q}\ref{Qi} shows that 
\[
Q\varphi=\sum_{k=1}^K\alpha_k\chi_{M_{a(x_k)}}
\]
$\lambda$-a.e., which is in $L^\downarrow(\mathcal A)$ by Lemma \ref{approx}. Also, the only nonzero values taken by $\varphi$ and $Q\varphi$ are $\sum_{k=1}^i\alpha_k$, for $i=1,\dots,K$ with $\varphi$ taking that value on the set $(x_{i+1},x_i]$ and $Q\varphi$ taking that value on $M_{a(x_i)}\setminus M_{a(x_{i+1})}$. Lemma \ref{lambda} shows that $\mu(M_{a(x_i)})=a(x_i)=\lambda([0,x_i])$ for each $i$, so we have $f^*=(Rf)^*$. 

In view of  Lemma \ref{approx} and \cite{BS}*{Proposition 2.1.7}, it suffices to prove \ref{QRe} for functions of the form $f=\sum_{k=1}^K\alpha_k\chi_{M_k}$ where $\alpha_k>0$ and $M_k\in\mathcal M$ for $k=1,\dots,K$. Without loss of generality, assume $M_1\supseteq M_2\supseteq\dots\supseteq M_K\supseteq M_{K+1}=\emptyset$. Since $R$ is additive, Proposition \ref{R}\ref{Ri} shows that $Rf=\sum_{k=1}^K\alpha_k\chi_{[0,\mu(M_k)]}$ $\lambda$-a.e., which is nonincreasing on $[0,\oo)$ and hence in $L^\downarrow(\mathcal B)$. Also, the only nonzero values taken by $f$ and $Rf$ are $\sum_{k=1}^i\alpha_k$, for $i=1,\dots,K$ with $f$ taking that value on the set $M_i\setminus M_{i+1}$ and $Rf$ taking that value on $[0,\mu(M_i)]\setminus[0,\mu(M_{i+1})]$. Since $\mu(M_i)=\lambda([0,\mu(M_i)])$ for each $i$, it follows that $f^*=(Rf)^*$. 

Fix $g\in L^\downarrow(\mathcal A)$. To show that the two sets in \ref{QRf} are equal, we show each contains the other. Suppose $h\in L^\downarrow(\mathcal A)$ satisfies $\int_Mh\,d\mu\le\int_Mg\,d\mu$ for all $M\in\mathcal M$. Take $\psi=Rh$ and use \ref{QRe} and \ref{QRb} to get $\psi\in L^\downarrow(\mathcal B)$ and $h=QRh=Q\psi$. For $x\ge0$, choose $M\in\mathcal M$ such that $\mu(M)=a(x)$ and apply Proposition \ref{R}\ref{Rf} twice to get
\[
\int_{[0,x]}\psi\,d\lambda=\int_{[0,x]}Rh\,d\lambda=\int_Mh\,d\mu\le\int_Mg\,d\mu=\int_{[0,x]}Rg\,d\lambda.
\]
This proves ``$\subseteq$''.

For the reverse inclusion, suppose $\psi\in L^\downarrow(\mathcal B)$ satisfies $\int_{[0,x]}\psi\,d\lambda\le\int_{[0,x]}Rg\,d\lambda$ for all $x\ge0$ Take $h=Q\psi$ and use \ref{QRd} to get $h\in L^\downarrow(\mathcal A)$. For $M\in\mathcal M$, apply Proposition \ref{Q}\ref{Qf} and Proposition \ref{R}\ref{Rf} to get
\[
\int_Mh\,d\mu=\int_MQ\psi\,d\mu=\int_{[0,\mu(M)]}\psi\,d\lambda\le\int_{[0,\mu(M)]}Rg\,d\lambda=\int_Mg\,d\mu.
\]
This proves ``$\supseteq$'' and completes the proof of \ref{QRf}.
\end{proof}
\section{Spaces Defined by Core Decreasing Functions}\label{decreasing}

To use the properties of decreasing functions as an effective tool of functional analysis, we need to relate them to Banach spaces and operators on Banach spaces. This was done for decreasing functions on $\mathbb R$ using the level function and least decreasing majorant constructions. In this section we define the down space of a Banach function space using a restricted duality approach and give analogues of these two constructions for core decreasing functions on a measure space.

Let $(U,\Sigma,\mu)$ be a $\sigma$-finite measure space with a full ordered core $\mathcal A$, let $M$ be the enriched core of Theorem \ref{M}, let $\lambda$ be the measure of Section \ref{tailor}, and let $R$ and $Q$ be the maps introduced in Propositions \ref{R} and \ref{Q}. We assume that $\mathcal A$ is full to ensure that the spaces we define below will be Banach spaces. Since core decreasing functions vanish on $U\setminus\cup\mathcal A$, without fullness the spaces would be only semi-normed.

Let $X$ be a Banach function space over $U$ and assume that both $X$ and $X'$ contain all characteristic functions of sets of finite measure. This assumption ensures that our spaces have plenty of core decreasing functions. It also ensures that $X'$ is a normed, not just seminormed. Observe that every nontrivial u.r.i. space satisfies the assumption. 
\begin{definition}\label{down} For a $\mu$-measurable function $f$, let
\[
\|f\|_{X\!\downarrow}=\sup\bigg\{\int_U |f|g\,d\mu:g\in X'\cap L^\downarrow(\mathcal A), \|g\|_{X'}\le1\bigg\}
\]
and $X\!\!\downarrow=\{f\in L_\mu:\|f\|_{X\!\downarrow}<\oo\}$. We call $X\!\!\downarrow$ the \emph{down space} of $X$.
\end{definition}
\begin{definition}\label{deflev} Let $f\in L(\Sigma)$. We say $f^o\in L^\downarrow(\mathcal A)$ is a \emph{level function} of $f$ if for all $g\in L^\downarrow(\mathcal A)$, 
\[
\int_U f^og\,d\mu=\sup\bigg\{\int_U|f|h\,d\mu:h\in L^\downarrow(\mathcal A), \int_Ah\,d\mu\le\int_Ag\,d\mu\text{ for all } A\in\mathcal A\bigg\}.
\]

Let $X^o$ be the collection of $f\in L(\Sigma)$ such that $f$ has a level function in $X$ and set $\|f\|_{X^o} =\|f\|_X$ for all $f\in X^o$.
\end{definition}
\begin{definition} Let $g\in L(\Sigma)$. We call $h$ a \emph{core decreasing majorant} of $g$ if $h\in L^\downarrow(\mathcal A)$ and $|g|\le h$ $\mu$-a.e. We call $\widetilde g$ a \emph{least core decreasing majorant} of $g$ if it is a core decreasing majorant and for every core decreasing majorant $h$, $\widetilde g\le h$ $\mu$-a.e.

Let $\widetilde X$ be the collection of $g\in L(\Sigma)$ such that $g$ has a least core decreasing majorant in $X$ and set $\|g\|_{\widetilde X}=\|\widetilde g\|_X$ for all $g\in \widetilde X$.
\end{definition}
It is evident that a least core decreasing majorant is unique up to equality $\mu$-a.e., provided it exists. We will prove existence in Lemma \ref{lcdm}. The existence and uniqueness of level functions is discussed in Lemma \ref{level}.

A level function, above, is defined in terms of the original ordered core $\mathcal A$, but, using Theorem \ref{M} and Lemma \ref{3fae}\ref{sC}, it is easy to show that it would not change if defined in terms of the enriched core $M$. That is, for all $g\in L^\downarrow(\mathcal A)$,
\begin{equation}\label{AorM}
\int_U f^og\,d\mu=\sup\bigg\{\int_U|f|h\,d\mu:h\in L^\downarrow(\mathcal A), \int_Mh\,d\mu\le\int_Mg\,d\mu\text{ for all } M\in\mathcal M\bigg\}.
\end{equation}

Computation of the norms of ``endpoint'' down spaces will illustrate Definition \ref{down}. They will be key spaces in the interpolation theory of Section \ref{interp}. 
\begin{theorem}\label{Loo} Let $f\in L(\Sigma)$ and $\varphi\in L(\mathcal B)$. Then
\begin{enumerate}[label=(\alph*)]
\item\label{Looa}  $L_\mu^1\!\!\downarrow=L_\mu^1$, with identical norms;
\item\label{Loob}  $\|f\|_{L_\mu^\oo\!\downarrow}=\sup_{A\in\mathcal A}\frac1{\mu(A)}\int_A|f|\,d\mu=\sup_{M\in\mathcal M}\frac1{\mu(M)}\int_M|f|\,d\mu$;
\item\label{Looc} if $f\in L_\mu^\oo\!\!\downarrow$ then $Rf\in L_\lambda^\oo\!\!\downarrow$ and $\|Rf\|_{L_\lambda^\oo\!\downarrow}\le\|f\|_{L_\mu^\oo\!\downarrow}$;
\item\label{Lood} if $\varphi\in L_\lambda^\oo\!\!\downarrow$ then $Q\varphi\in L_\mu^\oo\!\!\downarrow$ and $\|Q\varphi\|_{L_\mu^\oo\!\downarrow}\le\|\varphi\|_{L_\lambda^\oo\!\downarrow}$.
\end{enumerate}
\end{theorem}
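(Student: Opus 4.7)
The plan is to work through the four parts in order, using $X'$-duality together with the approximation Lemma \ref{approx} and the two tailoring propositions of Section \ref{tailor}.

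For \ref{Looa}, I would use $(L^1_\mu)'=L^\infty_\mu$, so the supremum in Definition \ref{down} runs over $g\in L^\infty_\mu\cap L^\downarrow(\mathcal A)$ with $\|g\|_\infty\le1$. The upper bound $\|f\|_{L^1\!\downarrow}\le\|f\|_{L^1}$ is immediate from the ordinary H\"older inequality. For the reverse, since $\mathcal A$ is full and $\sigma$-bounded, choose $A_n\in\mathcal A$ with $A_n\uparrow U$; the characteristic functions $\chi_{A_n}$ are in $L^\downarrow(\mathcal A)$ with sup-norm at most one, and monotone convergence gives $\int_{A_n}|f|\uparrow\int_U|f|$, yielding $\|f\|_{L^1}\le\|f\|_{L^1\!\downarrow}$ in both directions (this handles finiteness too).

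For \ref{Loob}, dualize with $(L^\infty_\mu)'=L^1_\mu$, so the sup is over $g\in L^1_\mu\cap L^\downarrow(\mathcal A)$ with $\|g\|_1\le1$. The lower bound is witnessed by $g=\chi_M/\mu(M)$ for $M\in\mathcal M$ with $\mu(M)>0$, giving $\|f\|_{L^\infty\!\downarrow}\ge\sup_{M\in\mathcal M}\mu(M)^{-1}\int_M|f|\,d\mu$ (and the same with $\mathcal A$). For the upper bound, approximate an arbitrary admissible $g$ from below by simple functions $g_n=\sum_k\alpha_k\chi_{M_k}$ as in Lemma \ref{approx}; then $\int|f|g_n=\sum_k\alpha_k\mu(M_k)\cdot\mu(M_k)^{-1}\int_{M_k}|f|\le K\sum_k\alpha_k\mu(M_k)=K\int g_n\le K$, where $K=\sup_{M\in\mathcal M}\mu(M)^{-1}\int_M|f|\,d\mu$, and let $n\to\oo$. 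Finally, to see $\sup_{\mathcal A}=\sup_{\mathcal M}$, apply Lemma \ref{3fae}\ref{sC} to write any $M\in\mathcal M$ as $M=\cup_n A_n$ (monotone convergence gives the identity cleanly) or $M=\cap_n A_n$ with $A_n\in\mathcal A$; in the decreasing case the finite measure of $A_1$ plus the a priori inequality $\int_{A_n}|f|\le K_{\mathcal A}\mu(A_n)$ passes to the limit, noting that the case $\int_{A_1}|f|=\oo$ is trivial. The mild bookkeeping around this decreasing case is the one delicate point in the proof.

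For \ref{Looc} and \ref{Lood}, use \ref{Loob} and the pointwise bounds $|Rf|\le R|f|$ and $|Q\varphi|\le Q|\varphi|$ from Propositions \ref{R}\ref{Re} and \ref{Q}\ref{Qe}. For \ref{Looc}, given $x\ge0$, pick any $M\in\mathcal M$ with $\mu(M)=a(x)=\lambda([0,x])$ (such an $M$ exists by Theorem \ref{M} and the definition of $\Gamma$); then Proposition \ref{R}\ref{Rf} gives
\[
\int_{[0,x]}|Rf|\,d\lambda\le\int_{[0,x]}R|f|\,d\lambda=\int_M|f|\,d\mu,
\]
so, dividing by $\lambda([0,x])=\mu(M)$ and taking the supremum, the analogue of \ref{Loob} on the half line (from Section \ref{old}, or obtained by the same argument as above) yields $\|Rf\|_{L^\oo_\lambda\!\downarrow}\le\|f\|_{L^\oo_\mu\!\downarrow}$. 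For \ref{Lood}, the dual computation uses Proposition \ref{Q}\ref{Qf}: for any $M\in\mathcal M$,
\[
\int_M|Q\varphi|\,d\mu\le\int_M Q|\varphi|\,d\mu=\int_{[0,\mu(M)]}|\varphi|\,d\lambda,
\]
and since $\mu(M)\in\Gamma$ gives $\lambda([0,\mu(M)])=\mu(M)$, dividing by $\mu(M)$ and taking the supremum yields $\|Q\varphi\|_{L^\oo_\mu\!\downarrow}\le\|\varphi\|_{L^\oo_\lambda\!\downarrow}$. The only subtlety here is matching $\mu$-measures of core sets with $\lambda$-measures of intervals, which is exactly what Lemma \ref{lambda} arranged.
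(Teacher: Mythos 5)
Your proposal is correct and follows essentially the same route as the paper: Hölder plus the constant function (you use $\chi_{A_n}\uparrow 1$ instead) for \ref{Looa}, the test functions $\chi_M/\mu(M)$ together with the simple-function approximation of Lemma \ref{approx} (which the paper packages as Corollary \ref{M2g}) for \ref{Loob}, and Propositions \ref{R}\ref{Re},\ref{Rf} and \ref{Q}\ref{Qe},\ref{Qf} with the measure-matching of Lemma \ref{lambda} for \ref{Looc} and \ref{Lood}. The extra care you take with the decreasing-intersection case in identifying the two suprema is sound but is exactly what the paper's citation of Theorem \ref{M} and Lemma \ref{3fae}\ref{sC} already covers.
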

\begin{proof} First recall that $(L^1_\mu)'=L^\oo_\mu$ and $(L^\oo_\mu)'=L^1_\mu$. 

If $\|g\|_{L_\mu^\oo}\le 1$, then $\int_U|f|g\,d\mu\le\|f\|_{L_\mu^1}$ so $\|f\|_{L_\mu^1\!\downarrow}\le\|f\|_{L_\mu^1}$. On the other hand, taking $g$ to be the constant function $1$, we have $g\in L^\downarrow(\mathcal A)$ and $\|g\|_{L_\mu^\oo}\le 1$, so $\|f\|_{L_\mu^1}=\int_U|f|g\,d\mu\le\|f\|_{L_\mu^1\!\downarrow}$. This proves \ref{Looa}. 
 
Theorem \ref{M} and Lemma \ref{3fae}\ref{sC} show that the values of the two suprema in \ref{Loob} coincide. Let $\nu$ be their common value. Fix $M\in \mathcal M$ with $\mu(M)>0$ and set $g=(1/\mu(M))\chi_M$. Then $\|g\|_{L_\mu^1}\le 1$, $g\in L^\downarrow(\mathcal A)$, and
\[
\frac1{\mu(M)}\int_M|f|\,d\mu=\int_U|f|g\,d\mu\le \|f\|_{L_\mu^\oo\!\downarrow}.
\]
Take the supremum over all such $M$ to get $\nu\le\|f\|_{L_\mu^\oo\!\downarrow}$. On the other hand, $\int_M|f|\,d\mu\le\int_M\nu\,d\mu$ for all $M\in\mathcal M$. Corollary \ref{M2g} shows that for all $g\in L^\downarrow(\mathcal A)$, 
\[
\int_U|f|g\,d\mu\le\int_U\nu g\,d\mu=\nu\|g\|_{L^1_\mu}.
\]
Take the supremum over all such $g$ with $\|g\|_{L^1_\mu}\le1$ to get $\|f\|_{L_\mu^\oo\!\downarrow}\le\nu$. This proves \ref{Loob}.

If $f\in L^\oo_\mu\!\!\downarrow$, fix $x\ge0$ with $\lambda([0,x])>0$, and, by Lemma \ref{lambda}, choose $M\in\mathcal M$ such that $\mu(M)=\lambda([0,x])$. By Proposition \ref{R}, parts \ref{Re} and \ref{Rf},
\[
\frac1{\lambda([0,x])}\int_{[0,x]}|Rf|\,d\lambda
\le\frac1{\lambda([0,x])}\int_{[0,x]}R|f|\,d\lambda
=\frac1{\mu(M)}\int_M|f|\,d\mu\le\|f\|_{L^\oo_\mu\!\downarrow}.
\]
Take the supremum over such $x$ and use the expression for $\|\varphi\|_{L^\oo_\lambda\!\downarrow}$ given in Section \ref{old} to get \ref{Looc}.

If $\varphi\in L^\oo_\lambda\!\!\downarrow$, fix $M\in\mathcal M$ with $\mu(M)>0$. By Lemma \ref{lambda}, $\lambda([0,\mu(M)])=\mu(M)$ so, by Proposition \ref{Q}, parts \ref{Qe} and \ref{Qf},
\[
\frac1{\mu(M)}\int_M|Q\varphi|\,d\mu
\le\frac1{\mu(M)}\int_MQ|\varphi|\,d\mu
=\frac1{\lambda([0,\mu(M)])}\int_{[0,\mu(M)]}|\varphi|\,d\lambda\le\|\varphi\|_{L^\oo_\lambda\!\downarrow}.
\]
Take the supremum over such $M$ to get \ref{Lood}.
\end{proof}

Next we show that each $f\in L(\Sigma)$ has a level function, which is unique if $f$ is bounded. If $f$ has a level function which takes the value $\oo$ on a set of positive $\mu$-measure, then one easily verifies that the constant function $\oo$ is also a level function of $f$, so we can't expect that every $f$ has a unique level function. The next lemma shows there is a consistent choice of level function (effectively choosing the smallest one) that justifies referring to ``the'' level function of $f$ even when $f^o$ takes infinite values. 

\begin{lemma}\label{level}
There is a unique map $f\mapsto f^o$ from $L(\Sigma)$ to $L^\downarrow(\mathcal A)$ such that $f^o$ is a level function of $f$ and if $0\le f_n\uparrow f$ $\mu$-a.e., then $f_n^o\uparrow f^o$ $\mu$-a.e. 
\end{lemma}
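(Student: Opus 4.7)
The plan is to transfer the classical half-line construction reviewed in Section~\ref{old} via the maps $R$ and $Q$ of Section~\ref{tailor}. Specifically, set
\[
f^o := Q\bigl((R|f|)^o\bigr),\qquad f\in L(\Sigma),
\]
where $(R|f|)^o$ is the level function on $[0,\oo)$ furnished by Section~\ref{old}. That $f^o\in L^\downarrow(\mathcal A)$ is immediate: Proposition~\ref{R}\ref{Rb} gives $R|f|\in L^+(\mathcal B)$, so $(R|f|)^o\in L^\downarrow(\mathcal B)$, and Theorem~\ref{QR}\ref{QRd} transports this back to $L^\downarrow(\mathcal A)$.

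To verify the level function identity, I would fix $g\in L^\downarrow(\mathcal A)$, note that $Rg\in L^\downarrow(\mathcal B)$ by Theorem~\ref{QR}\ref{QRe}, and use Theorem~\ref{QR}\ref{QRc} to obtain
\[
\int_U f^o g\,d\mu=\int_{[0,\oo)}(Rg)(R|f|)^o\,d\lambda.
\]
The classical level function identity on $[0,\oo)$ rewrites the right-hand side as the supremum of $\int_{[0,\oo)}(R|f|)\psi\,d\lambda$ over $\psi\in L^\downarrow(\mathcal B)$ satisfying $\int_{[0,x]}\psi\,d\lambda\le\int_{[0,x]}Rg\,d\lambda$ for every $x\ge0$. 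Another application of Theorem~\ref{QR}\ref{QRc} converts each such integrand to $\int_U|f|(Q\psi)\,d\mu$; Theorem~\ref{QR}\ref{QRf} reparametrises the admissible $\psi$'s as exactly those $h\in L^\downarrow(\mathcal A)$ with $\int_M h\,d\mu\le\int_M g\,d\mu$ for every $M\in\mathcal M$; and \eqref{AorM} replaces $\mathcal M$ by $\mathcal A$ in the constraint, delivering the identity. Monotone convergence is then a direct chain: Proposition~\ref{R}\ref{Rc} gives $Rf_n\uparrow Rf$ $\lambda$-a.e., the half-line monotone convergence property of Section~\ref{old} yields $(Rf_n)^o\uparrow(Rf)^o$ $\lambda$-a.e., and Proposition~\ref{Q}\ref{Qc} gives $f_n^o\uparrow f^o$ $\mu$-a.e.

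The main obstacle is uniqueness, since the level function identity alone does not pin $f^o$ down on sets where it is infinite. Because the identity depends on $f$ only through $|f|$, I would reduce to nonnegative $f$. For bounded $f\ge0$ and any level function $h$ of $f$, the identity at $g=\chi_A$ with $A\in\mathcal A$, combined with a countable exhaustion $A_n\uparrow U$ supplied by $\sigma$-boundedness, forces $\int_A h\,d\mu\le\|f\|_{L^\infty_\mu}\mu(A)$; Lemma~\ref{3fae}\ref{sC} extends this bound from $\mathcal A$ to all of $\mathcal M$. The superlevel set $\{h>\|f\|_{L^\infty_\mu}+\varepsilon\}$ is $\sigma(\mathcal A)$-measurable and downward-closed for $\le_{\mathcal A}$, and its intersections with the $A_n$ therefore lie in $\mathcal M$ by Lemma~\ref{3fae}\ref{rs}, whence the bound shows they are $\mu$-null. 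Thus every level function of a bounded $f\ge0$ is bounded by $\|f\|_{L^\infty_\mu}$.

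With this boundedness in hand, if $f^o$ and $f^\diamond$ are two level functions of a bounded $f\ge0$, the identity at $g=\chi_M$ forces $\int_M f^o\,d\mu=\int_M f^\diamond\,d\mu$ for every $M\in\mathcal M$; the measures $f^o\,d\mu$ and $f^\diamond\,d\mu$ then agree on the semi-ring $\{M_1\setminus M_2:M_1,M_2\in\mathcal M\}$ (Lemma~4.3 of~\cite{Snf}) and hence on the generated $\sigma$-ring $\sigma(\mathcal A)$, giving $f^o=f^\diamond$ $\mu$-a.e. To remove boundedness I would apply this to the truncations $\min(f,n)$ and invoke the monotone convergence property of both maps. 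Finally, for general $f\in L(\Sigma)$, the observation that the level function identity depends only on $|f|$ forces $f^o=|f|^o$ $\mu$-a.e., completing the uniqueness argument.
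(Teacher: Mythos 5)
Your proposal is correct and follows essentially the same route as the paper: the same definition $f^o=Q((R|f|)^o)$, the same chain through Theorem~\ref{QR} and \eqref{AorM} for the level function identity, the same monotone convergence transfer, and the same uniqueness argument via $\int_M f^o\,d\mu=\int_M f^\diamond\,d\mu$ for $M\in\mathcal M$ in the bounded case followed by truncation. Your intermediate step establishing the pointwise bound $f^o\le\|f\|_{L^\infty_\mu}$ is a harmless (and slightly redundant) addition; the paper gets by with finiteness of the integrals over $\mathcal M$ alone.
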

\begin{proof} Let $f\in L^+(\Sigma)$, let $(Rf)^o$ be the level function from Section \ref{old}, and set $f^o=Q((Rf)^o)$. Since $(Rf)^o\in L^\downarrow(\mathcal B)$, Theorem \ref{QR}\ref{QRd} shows $f^o\in L^\downarrow(\mathcal A)$. Now fix $g\in L^\downarrow(\mathcal A)$. Theorem \ref{QR}\ref{QRe} shows $Rg\in L^\downarrow(\mathcal B)$ and Theorem \ref{QR}\ref{QRc} together with results of Section \ref{old} show
\begin{align*}
\int_Uf^og\,d\mu&=\int_{[0,\oo)}(Rf)^oRg\,d\lambda\\
&=\sup\bigg\{\int_{[0,\oo)}(Rf)\psi\,d\lambda:\psi\in L^\downarrow(\mathcal B),\int_{[0,x]}\psi\,d\lambda\le \int_{[0,x]}Rg\,d\lambda\text{ for }x\ge0\bigg\}.
\end{align*}
But $\int_{[0,\oo)}(Rf)\psi\,d\lambda=\int_UfQ\psi\,d\mu$ by Theorem \ref{QR}\ref{QRc}, so in view of Theorem \ref{QR}\ref{QRf} we have
\[
\int_U f^og\,d\mu=\sup\bigg\{\int_Ufh\,d\mu:h\in L^\downarrow(\mathcal A), \int_Mh\,d\mu\le\int_Mg\,d\mu\text{ for all } M\in\mathcal M\bigg\}.
\]
By \eqref{AorM}, $f^o$ is a level function of $f$. If $f\in L(\Sigma)$, set $f^o=(|f|)^o$. It is immediate that $f^o$ is a level function of $f$. 

If $0\le f_n\uparrow f$ $\mu$-a.e., then Proposition \ref{R}, parts \ref{Rb} and \ref{Rc}, imply that $0\le Rf_n\uparrow Rf$ $\lambda$-a.e. and the results of Section \ref{old} show that $0\le (Rf_n)^o\uparrow (Rf)^o$ $\lambda$-a.e. From Proposition \ref{Q}\ref{Qc}, we get $(f_n)^o\uparrow f^o$ $\mu$-a.e.

Suppose $f\mapsto f^\bullet$ is another map from $L(\Sigma)$ to $L^\downarrow(\mathcal A)$ such that $f^\bullet$ is a level function of $f$ and if $0\le f_n\uparrow f$ $\mu$-a.e., then $f_n^\bullet\uparrow f^\bullet$ $\mu$-a.e. Then, for each $f\in L(\Sigma)\cap   L^\oo_\mu$, $f^\bullet$ and $f^o$ are non-negative, $\sigma(\mathcal M)$-measurable and, taking $g=\chi_M$ in Definition \ref{deflev}, satisfy $\int_Mf^\bullet\,d\mu=\int_Mf^o\,d\mu\le\|f\|_{L^\oo_\mu}\mu(M)<\oo$ for all $M\in\mathcal M$. It follows that $f^\bullet=f^o$ $\mu$-a.e.

For every $f\in L(\Sigma)$, $|f|$ can be expressed as the limit of an increasing sequence of $f_n\in L^+(\Sigma)\cap L^\oo_\mu$. Since $f_n^\bullet\uparrow f^\bullet$, $f_n^o\uparrow f^o$ $\mu$-a.e. and $f_n^\bullet=f_n^o$ $\mu$-a.e for all $n$, we have $f^\bullet=f^o$ $\mu$-a.e. This proves uniqueness. 
\end{proof}

As Example \ref{ordinal} shows, if $g\in L(\Sigma)$, the natural analogue of the definition of $\widetilde \psi$ given in Section \ref{old} may fail to give a least core decreasing majorant of $g$. The next lemma takes a different approach.
\begin{lemma}\label{lcdm} Every $g\in L(\Sigma)$ has a least core decreasing majorant $\widetilde g$, which is unique up to equality $\mu$-a.e. If $g_n\in L^+(\Sigma)$ and $g_n\uparrow g$ $\mu$-a.e., then $\widetilde{g_n}\uparrow \widetilde g$ $\mu$-a.e. 
\end{lemma}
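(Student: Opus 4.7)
My plan is to realize the least core decreasing majorant as the essential infimum of the family $F_g = \{h \in L^\downarrow(\mathcal A) : |g| \le h \text{ $\mu$-a.e.}\}$. I would first verify that $F_g$ is nonempty and downward directed. Non-emptiness follows because the constant function $\infty$ is $\sigma(\mathcal A)$-measurable and trivially decreasing, so it lies in $L^\downarrow(\mathcal A)$ and dominates $|g|$. For directedness, if $h_1, h_2 \in F_g$, then $\min(h_1, h_2)$ is $\sigma(\mathcal A)$-measurable, nonnegative, dominates $|g|$, and is decreasing since the pointwise minimum of two decreasing functions is decreasing. A similar verification shows that $F_g$ is closed under pointwise decreasing limits of countable sequences.

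To extract a minimum from the possibly uncountable family, I would use $\sigma$-finiteness of $\mu$ to fix a probability measure $\mu'$ equivalent to $\mu$ and consider the bounded functional $I(h) = \int_U \arctan(h)\,d\mu' \in [0,\pi/2]$. Choosing $h_n \in F_g$ with $I(h_n) \to \alpha := \inf_{h \in F_g} I(h)$ and replacing $h_n$ by $\min(h_1, \ldots, h_n)$, I obtain a pointwise decreasing sequence in $F_g$. Set $\widetilde g$ to be its pointwise limit; by closure of $F_g$ under decreasing limits, $\widetilde g \in F_g$, and monotone convergence applied to $\pi/2 - \arctan(h_n)$ gives $I(\widetilde g) = \alpha$. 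For any $h \in F_g$, the minimum $\min(\widetilde g, h)$ lies in $F_g$ and satisfies $\alpha \le I(\min(\widetilde g, h)) \le I(\widetilde g) = \alpha$; since $\arctan$ is strictly increasing and $\min(\widetilde g, h) \le \widetilde g$ pointwise, the equality of these integrals forces $\min(\widetilde g, h) = \widetilde g$ $\mu$-a.e., so $\widetilde g \le h$ $\mu$-a.e. Uniqueness of $\widetilde g$ up to $\mu$-a.e.\ equality is then immediate.

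For the monotone convergence statement, minimality of each $\widetilde{g_n}$ combined with $g_n \le g_{n+1}$ shows $\widetilde{g_n} \le \widetilde{g_{n+1}}$, so $k := \sup_n \widetilde{g_n}$ exists and lies in $L^\downarrow(\mathcal A)$. Since $k \ge \widetilde{g_n} \ge g_n$ for every $n$, the limit satisfies $k \ge g$, making $k$ a core decreasing majorant of $g$ and hence $k \ge \widetilde g$. Conversely, $\widetilde g$ is a core decreasing majorant of each $g_n$, so $\widetilde g \ge \widetilde{g_n}$ for all $n$ and therefore $\widetilde g \ge k$, yielding equality.

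The main technical obstacle is arranging for the essential infimum of the directed family $F_g$ to be realized as a pointwise decreasing limit; the $\arctan$ trick resolves this by converting the problem to minimizing a bounded real-valued functional on a downward directed set, a standard maneuver that handles the possibility of $[0,\infty]$-valued functions and makes full use of the $\sigma$-finiteness of $\mu$.
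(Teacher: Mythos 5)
Your argument is correct, and it takes a genuinely different route from the paper's. The paper first reduces to the case where $g$ is bounded above (so that a \emph{finite} core decreasing majorant exists), then uses fullness of $\mathcal A$ to fix an exhausting sequence $A_1\subseteq A_2\subseteq\dots$ of core sets and runs a near-minimizer argument on the finite quantities $\beta_n=\inf_h\int_{A_n}h\,d\mu$, extracting a decreasing sequence of majorants whose limit is shown to be least by an $\varepsilon$-of-room estimate $\int_{A_n}\bigl(h-\min(f,h)\bigr)\,d\mu\le 1/n$; unbounded $g$ is then handled by truncating to $\min(g,k)$ and passing to the increasing limit. You instead minimize the single bounded functional $I(h)=\int_U\arctan(h)\,d\mu'$ over the whole family of majorants at once, where $\mu'$ is an equivalent probability measure; this compactification of the range lets you treat $[0,\infty]$-valued majorants directly, so you need neither the truncation step nor the exhaustion by core sets, and the ``least'' property falls out of strict monotonicity of $\arctan$ rather than a limiting integral estimate. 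Both proofs rest on the same structural facts — the family of core decreasing majorants is nonempty, closed under finite minima, and closed under monotone limits — and your handling of uniqueness and of the monotone convergence statement matches the paper's. The one hypothesis you should make explicit is that the constant function (your initial element of $F_g$) is $\sigma(\mathcal A)$-measurable; this uses the standing assumption that $\mathcal A$ is full, so that $\sigma(\mathcal A)$ is a $\sigma$-algebra over $U=\cup\mathcal A$, the same assumption the paper invokes to produce the sets $A_n$.
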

\begin{proof} Let $g\in L(\Sigma)$. Choose an increasing sequence of core sets $A_n$ such that $U=\cup_{n=1}^\oo A_n$. This is possible because $\mathcal A$ is a full ordered core. The functions $g$ and $|g|$ have the same core decreasing majorants so we may assume $g$ is non-negative. First suppose $g$ is bounded above so $g$ has a finite core decreasing majorant. For each $n$, let $\beta_n=\inf\int_{A_n}h\,d\mu<\oo$, where the infimum is taken over all core decreasing majorants $h$ of $g$, and choose a core decreasing majorant $h_n$ of $g$ such that $\int_{A_n}h_n\,d\mu<\beta_n+1/n$. Since the minimum of two core decreasing majorants of $g$ is again a core decreasing majorant of $g$, we may assume that $h_1\ge h_2\ge\dots$. The limit of $h_n$, call it $h$, exists and is a core decreasing majorant of $g$. If $f$ is a core decreasing majorant of $g$, so is $\min(f,h)$. Thus, for each $n$,
\[
\int_{A_n}h\,d\mu\le\int_{A_n}h_n\,d\mu<\beta_n+\frac1n\le\int_{A_n}\min(f,h)\,d\mu+\frac1n<\oo.
\]
Thus
\[
\int_{A_n}h-\min(f,h)\,d\mu\le\frac1n.
\]
The function $h-\min(f,h)$ is nonnegative and, letting $n\to\oo$ we see that its integral on $U$ is zero. Therefore $f\ge h$ $\mu$-a.e. This shows that $h$ is a least core decreasing majorant of $g$. 

If $g$ is not bounded above, let $h_k$ be a least core decreasing majorant of $\min(g,k)$ for $k=1,2,\dots$. Then $h_k$ is an increasing sequence and it is easy to see that its limit is a least core decreasing majorant of $g$.

Uniqueness, up to equality $\mu$-a.e., is clear. Finally, if $0\le g_n\uparrow g$ $\mu$-a.e., it is immediate that $\widetilde{g_n}$ increases with $n$ $\mu$-a.e. Moreover, its limit is a least core decreasing majorant of $g$. Uniqueness shows that the limit is equal to $\widetilde g$ $\mu$-a.e.
\end{proof}

With our two constructions in place, we give some properties of the down space and its associate space $X'$. In Theorem \ref{downlev} we will get a stronger conclusion for $X\!\!\downarrow$ and $X^o$ when $X$ is assumed to be u.r.i.
\begin{theorem}\label{props} The set $X\!\!\downarrow$, equipped with the norm  $\|\cdot\|_{X\!\downarrow}$, is a Banach function space with the Fatou property. If $f\in X''$, then $\|f\|_{X\!\downarrow}\le\|f\|_{X''}$ and, if $f^o\in X''$, then $\|f\|_{X\!\downarrow}\le\|f^o\|_{X''}$. 

The map $g\mapsto\widetilde g$ is sublinear. The set $\widetilde X$, equipped with the norm, $\|\cdot\|_{\widetilde X}$ is a Banach function space. It has the Fatou property if $X$ does. Also, $X\!\!\downarrow=\big(\widetilde{X'}\big)'$, with identical norms, and $X\!\!\downarrow'=\widetilde{X'}$, with identical norms.

Finally, $\widetilde{L^\oo_\mu}=L^\oo_\mu$, with identical norms.
\end{theorem}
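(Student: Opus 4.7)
The plan is to verify the nine listed properties in a natural order, relying throughout on the constructions of Lemmas~\ref{level} and~\ref{lcdm} and on the tailored measure machinery when convenient. First I would show that $X\!\!\downarrow$ is a Banach function space with the Fatou property. The lattice property, absolute homogeneity, and subadditivity of $\|\cdot\|_{X\!\downarrow}$ are immediate from its definition as a supremum of integral functionals. For definiteness, if $\|f\|_{X\!\downarrow}=0$, then testing against $g=\chi_A/\|\chi_A\|_{X'}$ for each $A\in\mathcal A$ of positive measure (such $g$ are core decreasing and lie in $X'$ by the standing assumption) forces $|f|=0$ a.e.\ on each $A$; fullness of $\mathcal A$ then gives $f=0$ $\mu$-a.e. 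The Fatou property follows directly from the Monotone Convergence Theorem inside the supremum, and completeness is the standard consequence of the Fatou property for Banach function spaces.

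The two norm bounds are quick. For $f\in X''$, restricting the supremum in $\|f\|_{X''}$ to core decreasing $g$ yields $\|f\|_{X\!\downarrow}\le\|f\|_{X''}$. For $f^o\in X''$, if $g\in X'\cap L^\downarrow(\mathcal A)$ with $\|g\|_{X'}\le1$ then taking $h=g$ in Definition~\ref{deflev} gives $\int_U|f|g\,d\mu\le\int_Uf^og\,d\mu\le\|f^o\|_{X''}$ by H\"older in the $X'/X''$ pairing, and sup over $g$ finishes it. Sublinearity of the least core decreasing majorant follows because $|\alpha|\widetilde g$ is a core decreasing majorant of $\alpha g$ and conversely (giving $\widetilde{\alpha g}=|\alpha|\widetilde g$), and because $\widetilde{g_1}+\widetilde{g_2}$ is a core decreasing majorant of $g_1+g_2$, whence the minimality property gives $\widetilde{g_1+g_2}\le\widetilde{g_1}+\widetilde{g_2}$. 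Monotonicity of $g\mapsto\widetilde g$ (from minimality applied to the majorant $\widetilde h$ of $g$ whenever $|g|\le h$) transfers the lattice structure of $X$ to $\widetilde X$, and sublinearity plus homogeneity give the triangle inequality for $\|\cdot\|_{\widetilde X}$. The Fatou property of $\widetilde X$ from that of $X$ is immediate from the monotone-convergence clause of Lemma~\ref{lcdm} applied to $0\le g_n\uparrow g$, followed by the Fatou property of $X$ applied to $\widetilde{g_n}\uparrow\widetilde g$. Completeness of $\widetilde X$ then follows in the standard way.

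The dualities are the centerpiece. For $X\!\!\downarrow=(\widetilde{X'})'$ I would prove the two inequalities of associate norms directly. Given $g\in\widetilde{X'}$ with $\|\widetilde g\|_{X'}\le1$, the pointwise bound $|g|\le\widetilde g$ together with $\widetilde g\in X'\cap L^\downarrow(\mathcal A)$ gives $\int_U|fg|\,d\mu\le\int_U|f|\widetilde g\,d\mu\le\|f\|_{X\!\downarrow}$, so $\|f\|_{(\widetilde{X'})'}\le\|f\|_{X\!\downarrow}$; conversely, any $h\in X'\cap L^\downarrow(\mathcal A)$ satisfies $\widetilde h=h$, hence $\|h\|_{\widetilde{X'}}=\|h\|_{X'}$, and testing recovers the defining supremum of $\|f\|_{X\!\downarrow}$. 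For $X\!\!\downarrow'=\widetilde{X'}$, I would take associates of the first identity to get $X\!\!\downarrow'=(\widetilde{X'})''$; since $X'$ always has the Fatou property, the clause just proved gives the Fatou property for $\widetilde{X'}$, and general Banach function space theory then yields $(\widetilde{X'})''=\widetilde{X'}$ with identical norms. Finally, $\widetilde{L^\oo_\mu}=L^\oo_\mu$ is trivial: the constant $\|g\|_{L^\oo_\mu}$ is a core decreasing majorant of $g$, forcing $\widetilde g\le\|g\|_{L^\oo_\mu}$ a.e., while $|g|\le\widetilde g$ gives the reverse. The main obstacle I anticipate is bookkeeping in the duality chain: one must verify that the hypothesis that both $X$ and $X'$ contain all characteristic functions of finite-measure sets, combined with fullness of $\mathcal A$, supplies enough saturation for the standard associate-of-associate theorem to apply to $\widetilde{X'}$ without circularity.
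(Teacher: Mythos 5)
Your proposal is correct and follows essentially the same route as the paper: nondegeneracy of $\|\cdot\|_{X\!\downarrow}$ via characteristic functions of core sets plus fullness, the two norm bounds by restricting/applying the level-function definition, sublinearity and minimality for $\widetilde X$, the two-inequality computation for $X\!\!\downarrow=(\widetilde{X'})'$, and the associate-of-associate argument (using the Fatou property of $X'$ and hence of $\widetilde{X'}$) for $X\!\!\downarrow'=\widetilde{X'}$. The one point to make explicit is that completeness of $\widetilde X$ must be established unconditionally --- the paper does this via the Riesz--Fischer property, observing that $\sum_n\widetilde{g_n}$ is a core decreasing majorant of $\sum_n g_n$ --- since $X$ itself need not have the Fatou property, so your ``then follows in the standard way'' should not lean on the Fatou clause that precedes it.
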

\begin{proof} The map $f\mapsto\|f\|_{X\!\downarrow}$ is nonnegative, positive homogeneous, and satisfies the triangle inequality. This ensures that $X\!\!\downarrow$ is a real vector space. The lattice property from Section \ref{uri} and the Fatou property, which implies completeness, follow by standard arguments, the same as those that prove the associate space of a saturated Banach function space is a Banach function space with the Fatou property.

Now suppose that $\|f\|_{X\!\downarrow}=0$. If $M\in\mathcal M$ and $\mu(M)>0$, then $\chi_{M}\in X'$ and $0<\|\chi_M\|_{X'}<\oo$. But $\chi_M\in L^\downarrow(\mathcal A)$ so
\[
0=\|f\|_{X\!\downarrow}\ge\int_U|f|\frac{\chi_M}{\|\chi_M\|_{X'}}\,d\mu=\frac1{\|\chi_M\|_{X'}}\int_M|f|\,d\mu.
\]
Since $\mathcal A$ is a full ordered core, this implies $\int_U|f|\,d\mu=0$ so $f=0$ $\mu$-a.e. This proves that $X\!\!\downarrow$ is a Banach function space with the Fatou property. 

Dropping the condition $g\in L^\downarrow(\mathcal A)$, in the definition of $\|f\||_{X\!\downarrow}$, gives the definition of $\|f\|_{X''}$. Thus $\|f\|_{X\!\downarrow}\le\|f\|_{X''}$.

If $g\in L^\downarrow(\mathcal A)$ and $\|g\|_{X'}\le1$ then, taking $h=g$ in the definition of a level function, $\int_U|f|g\,d\mu\le\int_Uf^og\,d\mu\le\|f^o\|_{X''}$. Taking the supremum over all such $g$, we have $\|f\|_{X\!\downarrow}\le\|f^o\|_{X''}$. 

The map $g\mapsto\widetilde g$ is clearly positive homogeneous and is also subadditive: If $f=g+h$, then $|f|\le|g|+|h|\le\widetilde g+\widetilde h$, which is core decreasing, so  $\widetilde f\le\widetilde g+\widetilde h$. Thus the map is sublinear. This makes it easy to verify that $\|\cdot\|_{\widetilde X}$ is a norm on $\widetilde X$. If $|f|\le g$ then $\widetilde f\le\widetilde g$ so $\widetilde X$ satisfies the lattice property of Section \ref{uri}. To prove $\widetilde X$ is complete, we will prove that it has the Riesz-Fischer property, that is, if $0\le g_n\in\widetilde X$ satisfy $\sum_{n=1}^\oo\|g_n\|_{\widetilde X}<\oo$, then $\|\sum_{n=1}^\oo g_n\|_{\widetilde X}\le\sum_{n=1}^\oo\|g_n\|_{\widetilde X}$. See \cite{Z}*{15.64.2}. For such functions $g_n$, let $G=\sum_{n=1}^\oo g_n$ and observe that $\sum_{n=1}^\oo \widetilde g_n$ is a core decreasing majorant of $G$ and hence majorizes $\widetilde G$. Since $X$ is complete, it has the Riesz-Fischer property, so 
\[
\|G\|_{\widetilde X}=\big\|\widetilde G\big\|_X\le\Big\|\sum_{n=1}^\oo \widetilde g_n\Big\|_X
\le\sum_{n=1}^\oo \|\widetilde g_n\|_X=\sum_{n=1}^\oo \|g_n\|_{\widetilde X}.
\]
Thus, $\widetilde X$ is a Banach function space. Finally, if $X$ has the Fatou property, and $0\le g_n\uparrow g$ $\mu$-a.e., then $\widetilde g_n\uparrow \widetilde g$ $\mu$-a.e. so $\|\widetilde g_n\|_X\uparrow \|\widetilde g\|_X$. Thus $\widetilde X$ also has the Fatou property.

If $g\in L^\downarrow(\mathcal A)$ and $\|g\|_{X'}\le1$, then $\widetilde g=g$ so we have $\|g\|_{\widetilde{X'}}\le1$ and conclude that $\int_U |fg|\,d\mu\le\|f\|_{(\widetilde{X'})'}$. Take the supremum over all such $g$ to get $\|f\|_{X\!\downarrow}\le\|f\|_{(\widetilde{X'})'}$. On the other hand, if $\|g\|_{\widetilde{X'}}\le1$ then $\widetilde g\in L^\downarrow(\mathcal A)$ and $\|\widetilde g\|_{X'}\le1$ so the definition of $X\!\!\downarrow$ gives $\int_U|fg|\,d\mu\le\int_U|f|\widetilde g\,d\mu\le\|f\|_{X\!\downarrow}$. Taking the supremum over all such $g$ gives $\|f\|_{(\widetilde{X'})'}\le\|f\|_{X\!\downarrow}$. This shows that $X\!\!\downarrow=\big(\widetilde{X'}\big)'$, with identical norms.

It follows that $X\!\!\downarrow'=\big(\widetilde{X'}\big)''$, with identical norms. But $X'$ has the Fatou property, and therefore so does $\widetilde{X'}$. Thus $X\!\!\downarrow'=\big(\widetilde{X'}\big)''=\widetilde{X'}$, with identical norms. 

Since $|g|\le\widetilde g$ $\mu$-a.e,  $\|g\|_{L^\oo_\mu}\le\|g\|_{\widetilde{L^\oo_\mu}}$. But the constant function with value $\|g\|_{L^\oo_\mu}$ is trivially core decreasing and majorizes $g$ so $\widetilde g\le\|g\|_{L^\oo_\mu}$. Thus $\|g\|_{\widetilde{L^\oo_\mu}}\le\|g\|_{L^\oo_\mu}$ as well. So $\widetilde{L^\oo_\mu}=L^\oo_\mu$, with identical norms.
\end{proof}
 
\section{Interpolation}\label{interp}

Here we express the $K$-functional for the couple $(L^1_\mu,L^\oo_\mu\!\!\downarrow)$ in terms of the level function and the $K$-functional for the couple $(\widetilde{L^1_\mu},L^\oo_\mu)$ in terms of the least core decreasing majorant. Then we show that both couples are Calder\'on couples  with divisibility constant $1$. Finally, we describe all the interpolation spaces for both couples by relating them to u.r.i. spaces.

Let $(U,\Sigma,\mu)$ be a $\sigma$-finite measure space with a full ordered core $\mathcal A$, let $M$ be the enriched core of Theorem \ref{M}, let $\lambda$ be the measure of Section \ref{tailor}, and let $R$ and $Q$ be the maps introduced in Propositions \ref{R} and \ref{Q}. 

The mapping properties of $Q$ and $R$ show that $Q$ is an admissible contraction from $(L^1_\lambda,L^\oo_\lambda)$ to $(L^1_\mu,L^\oo_\mu)$ and $R$ is an admissible contraction from $(L^1_\mu,L^\oo_\mu)$ to $(L^1_\lambda,L^\oo_\lambda)$. This remains valid for other couples of interest.
\begin{lemma}\label{AM} The maps $Q$ and $R$ have the following mapping properties:
\begin{enumerate}[label=(\alph*)]
\item\label{AMa} $Q$ is an admissible contraction from $(L^1_\lambda, L^\oo_\lambda\!\!\downarrow)$ to $(L^1_\mu, L^\oo_\mu\!\!\downarrow)$;
\item\label{AMb} $R$ is an admissible contraction from $(L^1_\mu, L^\oo_\mu\!\!\downarrow)$ to $(L^1_\lambda, L^\oo_\lambda\!\!\downarrow)$;
\item\label{AMc} $Q$ is an admissible contraction from $(\widetilde{L^1_\lambda},L^\oo_\lambda)$ to $(\widetilde{L^1_\mu}, L^\oo_\mu)$;
\item\label{AMd} $R$ is an admissible contraction from $(\widetilde{L^1_\mu}, L^\oo_\mu)$ to $(\widetilde{L^1_\lambda}, L^\oo_\lambda)$.
\end{enumerate}
\end{lemma}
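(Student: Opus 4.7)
The plan is to verify the two ingredients of ``admissible contraction'' in each of the four cases: linearity on the sum space $X_0+X_1$, and the two endpoint contraction bounds. Linearity is essentially free once we observe that the relevant sum space sits inside the vector space ($V(\mathcal B)$ or $V(\Sigma)$) on which Proposition \ref{R} or \ref{Q} already gives a linear map. Every $\varphi\in L^1_\lambda+L^\oo_\lambda\!\!\downarrow$ (resp.\ every element of $\widetilde{L^1_\lambda}+L^\oo_\lambda$) is $\lambda$-integrable on $[0,x]$ for each $x\ge0$, so lies in $V(\mathcal B)$; the analogous statements hold for $R$ and $V(\Sigma)$, using Theorem \ref{Loo}\ref{Loob} and the definition of $\widetilde{L^1_\mu}$ to see integrability on every $M\in\mathcal M$.

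For \ref{AMa} and \ref{AMb}, no new work is required. The $L^1$--$L^1$ contraction bounds are exactly Propositions \ref{Q}\ref{Qg} and \ref{R}\ref{Rg}, and the bounds between the $L^\oo\!\!\downarrow$ spaces are precisely Theorem \ref{Loo}\ref{Looc} and \ref{Lood}. This handles the two couples with the down-space at the $L^\oo$-endpoint.

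For \ref{AMc} and \ref{AMd}, the $L^\oo$--$L^\oo$ halves come directly from Propositions \ref{Q}\ref{Qh} and \ref{R}\ref{Rh}. The $\widetilde{L^1}$--$\widetilde{L^1}$ halves are the main content. I will treat \ref{AMc}; part \ref{AMd} is symmetric. Given $\varphi\in\widetilde{L^1_\lambda}$, positivity of $Q$ together with Proposition \ref{Q}\ref{Qe} yields $|Q\varphi|\le Q|\varphi|\le Q\widetilde\varphi$ $\mu$-a.e. Now $\widetilde\varphi\in L^\downarrow(\mathcal B)$ by construction, so Theorem \ref{QR}\ref{QRd} gives $Q\widetilde\varphi\in L^\downarrow(\mathcal A)$. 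Hence $Q\widetilde\varphi$ is a core decreasing majorant of $Q\varphi$, so by the defining property of the least core decreasing majorant (Lemma \ref{lcdm}) we have $\widetilde{Q\varphi}\le Q\widetilde\varphi$ $\mu$-a.e. Combining this with the $L^1$-contractivity of $Q$ (Proposition \ref{Q}\ref{Qg}) gives
\[
\|Q\varphi\|_{\widetilde{L^1_\mu}}=\|\widetilde{Q\varphi}\|_{L^1_\mu}\le\|Q\widetilde\varphi\|_{L^1_\mu}\le\|\widetilde\varphi\|_{L^1_\lambda}=\|\varphi\|_{\widetilde{L^1_\lambda}}.
\]
For \ref{AMd}, replace $Q$ by $R$, use Proposition \ref{R}\ref{Re} and \ref{Rg}, and invoke Theorem \ref{QR}\ref{QRe} in place of \ref{QRd}.

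I do not expect a serious obstacle here: every step is a single-line application of the machinery in Sections \ref{Core Maps}--\ref{decreasing}. The only conceptual point worth emphasising is the role of Theorem \ref{QR}\ref{QRd} and \ref{QRe} (``$Q$ and $R$ preserve core decreasing functions''), which is what allows a bound on the original function to be upgraded to a bound on its least core decreasing majorant.
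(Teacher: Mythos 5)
Your proof is correct and follows essentially the same route as the paper: parts \ref{AMa} and \ref{AMb} are immediate from Propositions \ref{Q}\ref{Qg}, \ref{R}\ref{Rg} and Theorem \ref{Loo}\ref{Looc}--\ref{Lood}, and for \ref{AMc}--\ref{AMd} the paper uses exactly your chain $|Q\varphi|\le Q|\varphi|\le Q\widetilde\varphi\in L^\downarrow(\mathcal A)$, hence $\widetilde{Q\varphi}\le Q\widetilde\varphi$, followed by the $L^1$-contractivity of $Q$ (and symmetrically for $R$). The only cosmetic differences are that you make the linearity of $Q$ and $R$ on the sum spaces explicit (the paper leaves this implicit) and cite Theorem \ref{QR}\ref{QRd}--\ref{QRe} where the paper cites positivity and additivity of $Q$ and $R$ directly; both justifications are sound.
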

\begin{proof} Proposition \ref{Q}\ref{Qg} and Theorem \ref{Loo}\ref{Lood} prove \ref{AMa}. Proposition \ref{R}\ref{Rg} and Theorem \ref{Loo}\ref{Looc} prove \ref{AMb}. If $\psi\in \widetilde{L^1_\lambda}$, then $|Q\psi|\le Q|\psi|\le Q\widetilde\psi\in L^\downarrow(\mathcal A)$, by parts \ref{Qb} and \ref{Qd} of Proposition \ref{Q}. It follows that $\widetilde{Q\psi}\le Q\widetilde\psi$. Thus,
\[
\|Q\psi\|_{\widetilde{L^1_\mu}}=\|\widetilde{Q\psi}\|_{L^1_\mu}\le\|Q\widetilde\psi\|_{L^1_\mu}\le\|\widetilde\psi\|_{L^1_\lambda}=\|\psi\|_{\widetilde{L^1_\lambda}}.
\]
This inequality, combined with Proposition \ref{Q}\ref{Qh}, proves \ref{AMc}. If $g\in \widetilde{L^1_\mu}$, then $|Rg|\le R|g|\le R\widetilde g\in L^\downarrow(\mathcal B)$, by parts \ref{Rb} and \ref{Rd} of Proposition \ref{R}. It follows that $\widetilde{Rg}\le R\widetilde g$. Thus,
\[
\|Rg\|_{\widetilde{L^1_\lambda}}=\|\widetilde{Rg}\|_{L^1_\lambda}\le\|R\widetilde g\|_{L^1_\lambda}\le\|\widetilde g\|_{L^1_\mu}=\|g\|_{\widetilde{L^1_\mu}}.
\]
This inequality, combined with Proposition \ref{R}\ref{Rh}, proves \ref{AMd}.
\end{proof}

If $X$ is u.r.i. and has the Fatou property, the next theorem shows that $X^o$ is a Banach function space by showing that it agrees with $X\!\!\downarrow$. It will follow from Theorem \ref{summ} that $X^o$ is a Banach function space for any u.r.i. space $X$.
\begin{theorem}\label{downlev} If $X$ is a u.r.i. space then $X\!\!\downarrow=(X'')^o$, with identical norms. If $X$ also has the Fatou property, then $X\!\!\downarrow=X^o$.
\end{theorem}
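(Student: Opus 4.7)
The second statement follows at once from the first because $X''=X$ when $X$ has the Fatou property (see Section \ref{uri}). The inequality $\|f\|_{X\!\downarrow}\le\|f^o\|_{X''}$ is already in Theorem \ref{props}, so the task is the reverse.

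The argument rests on two Hardy-Littlewood-P\'olya facts, both established by transfer to the half-line $([0,\oo),\lambda)$. The first is: for $k,h\in L^\downarrow(\mathcal A)$ with $\int_A k\,d\mu\le\int_A h\,d\mu$ for every $A\in\mathcal A$, one has $\|k\|_{X'}\le\|h\|_{X'}$. By Proposition \ref{R}\ref{Rf}, the hypothesis converts to $\int_{[0,x]}Rk\,d\lambda\le\int_{[0,x]}Rh\,d\lambda$ for all $x\ge 0$; a layer-cake computation for nonincreasing $\varphi\ge 0$ yields $\int_{[0,x]}\varphi\,d\lambda=\int_0^{\lambda([0,x])}\varphi^*(s)\,ds$, delivering $\int_0^t(Rk)^*\le\int_0^t(Rh)^*$ for every $t\in\Gamma=\lambda(\mathcal M)$. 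Since the distribution functions of nonincreasing functions on $([0,\oo),\lambda)$ can only take values in $\Gamma$, the rearrangements $(Rk)^*$ and $(Rh)^*$ are constant on each gap of $\Gamma$; the primitives are thus affine on each gap, and an affine function with nonpositive values at both endpoints is nonpositive throughout, so the inequality extends to all $t\ge 0$. Theorem \ref{QR}\ref{QRe} supplies $k^*=(Rk)^*$ and $h^*=(Rh)^*$, and u.r.i. of $X'$ completes this step.

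The second fact is: $\|g^o\|_{X'}\le\|g\|_{X'}$ for any $g\in L(\Sigma)$. Since $g^o=Q((R|g|)^o)$ by Lemma \ref{level}, Theorem \ref{QR}\ref{QRd} gives $(g^o)^*=((R|g|)^o)^*$. The K-functional formula $K(\varphi,t;L^1_\lambda,L^\oo_\lambda\!\downarrow)=\int_0^t(\varphi^o)^*$ of Section \ref{old}, together with the obvious norm embedding $L^\oo_\lambda\hookrightarrow L^\oo_\lambda\!\downarrow$, yields $\int_0^t((R|g|)^o)^*\le\int_0^t(R|g|)^*$. Since $R$ is an admissible contraction from $(L^1_\mu,L^\oo_\mu)$ to $(L^1_\lambda,L^\oo_\lambda)$ by parts \ref{Rg} and \ref{Rh} of Proposition \ref{R}, equation \eqref{T} gives $\int_0^t(R|g|)^*\le\int_0^t g^*$. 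Chaining these and invoking u.r.i. of $X'$ proves the claim.

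With both facts in hand, the main estimate runs as follows. For $g\in X'$ with $\|g\|_{X'}\le 1$ (without loss of generality, $g\ge 0$), apply Definition \ref{deflev} to $g$ with test function $f^o\in L^\downarrow(\mathcal A)$ and admissible candidate $f^o$ itself to obtain $\int_U f^o g\,d\mu\le\int_U f^o g^o\,d\mu$. Then apply Definition \ref{deflev} to $|f|$ with test function $g^o$: every admissible candidate $\ell$ lies in $L^\downarrow(\mathcal A)$ and satisfies $\int_A\ell\le\int_A g^o$, so by the first fact $\|\ell\|_{X'}\le\|g^o\|_{X'}$, and by the second $\|g^o\|_{X'}\le 1$. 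Hence $\int_U|f|\ell\,d\mu\le\|f\|_{X\!\downarrow}$, and taking the supremum over $\ell$ yields $\int_U f^o g^o\,d\mu\le\|f\|_{X\!\downarrow}$. Combining, $\int_U f^o g\,d\mu\le\|f\|_{X\!\downarrow}$, and the supremum over $g\in X'$ gives $\|f^o\|_{X''}\le\|f\|_{X\!\downarrow}$, as required; the equivalence of membership follows from the equality of norms. The principal obstacle is the gap-extension step in the first fact, which relies delicately on the tailored structure of $\lambda$ and the constancy of rearrangements across gaps in $\Gamma$.
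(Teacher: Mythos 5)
Your proof is correct, but it reaches the key inequality $\int_U f^og\,d\mu\le\|f\|_{X\!\downarrow}\|g\|_{X'}$ by a genuinely different route than the paper. The paper's argument is constructive: it reduces to bounded $f$ supported on a core set, invokes the explicit averaging-operator representation $(Rf)^o=J_f(Rf)$ from \cite{Stm}, uses formal self-adjointness and the multiplicativity of $Q$ to write $\int_Uf^og^o\,d\mu=\int_UfQJ_fJ_gRg\,d\mu$, and then exploits the fact that $QJ_fJ_gR$ is an admissible contraction on $(L^1_\mu,L^\oo_\mu)$ whose image $QJ_f(Rg)^o$ is core decreasing. You instead use only the extremal (supremum) characterization of the level function in Definition \ref{deflev}, applied twice, supported by two majorization lemmas: your Fact 1 (that $\int_Ak\,d\mu\le\int_Ah\,d\mu$ for core decreasing $k,h$ forces $\int_0^tk^*\le\int_0^th^*$, hence $\|k\|_{X'}\le\|h\|_{X'}$ by the u.r.i.\ property) and your Fact 2 ($\|g^o\|_{X'}\le\|g\|_{X'}$, which is essentially the content of Theorem \ref{summ}\ref{summe}, obtained here from the half-line $K$-functional formula of Section \ref{old} and \eqref{T}). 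Your approach is softer and avoids the reduction to bounded, compactly core-supported functions, at the cost of the delicate gap analysis in Fact 1; the paper's approach yields an explicit contraction, a device it reuses in the interpolation section. A few points you should tighten in a final writeup: (i) Proposition \ref{R}\ref{Rf} compares $\int_{[0,x]}Rk\,d\lambda$ with integrals over sets in the enriched core $\mathcal M$, not $\mathcal A$, so you need the passage from $\int_Ak\le\int_Ah$ ($A\in\mathcal A$) to $\int_Mk\le\int_Mh$ ($M\in\mathcal M$) via Lemma \ref{3fae}\ref{sC} and monotone/dominated convergence --- routine in your application since $\int_Ag^o\,d\mu\le\|\chi_A\|_X\|g^o\|_{X'}<\oo$, and the same issue the paper waves at in \eqref{AorM}; (ii) the unbounded gap $(\sup\Gamma,\oo)$ when $\sup\Gamma<\oo$ is handled because both rearrangements vanish there ($\lambda$ has total mass $\sup\Gamma$); (iii) the affine-difference argument requires the primitives to be finite, so you should dispose separately of the trivial case $\int_0^t(Rh)^*\equiv\oo$, and likewise note that Fact 2 is trivial when $g\notin L^1_\mu+L^\oo_\mu$. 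With these remarks added, the argument is complete and yields the theorem exactly as stated, including the identification of the underlying sets from the equality of (possibly infinite) norms.
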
 
\begin{proof}From Theorem \ref{props} we have $\|f\|_{X\!\downarrow}\le\|f^o\|_{X''}=\|f\|_{(X'')^o}$ for all $f\in (X'')^o$. 

If $f\in X\!\!\downarrow$ we can choose non-negative functions $f_n$, with each one bounded above and supported on some $A_n\in\mathcal A$, that satisfy $f_n\uparrow |f|$ and hence $(f_n)^o\uparrow f^o$ $\mu$-a.e. Since $X''$ and $X\!\!\downarrow$ have the Fatou property, $\|(f_n)^o\|_{X''}$ increases to $\|f^o\|_{X''}$ and $\|f_n\|_{X\!\downarrow}$ increases to $\|f\|_{X\!\downarrow}$. Therefore, when proving $\|f^o\|_{X''}\le\|f\|_{X\!\downarrow}$ we are free to assume that $f$ is nonnegative, bounded above, and supported on some $A\in\mathcal A$.  

Fix such an $f$ and apply Proposition \ref{R}, parts \ref{Rb}, \ref{Rh}, \ref{Ri} and \ref{Rj}, to see that $Rf$ is non-negative, bounded above and supported on $[0,x]$ for some $x\in[0,\oo)$. By \cite{Stm}*{Proposition 1.5}, $(Rf)^o=J_f(Rf)$, where the operator $J_f$ is defined by
\[
J_f\varphi(x)=\begin{cases}\frac1{\lambda(I)}\int_I\varphi\,d\lambda,&x\in I\in\mathcal I;\\\varphi(x),&x\notin\cup_{I\in\mathcal I}I;\end{cases}
\]
for some countable collection $\mathcal I$ of disjoint subintervals of $[0,\oo)$ depending on $f$. By \cite{Stm}*{Proposition 1.4}, we see that if $\varphi,\psi\in L^+(\mathcal B)$, then $\int_{(0,\oo]}\psi J_f\varphi\,d\lambda=\int_{(0,\oo]}\varphi J_f\psi\,d\lambda$ and if $\varphi\in L^\downarrow(\mathcal B)$ then $J_f\varphi\in L^\downarrow(\mathcal B)$. It is clear that $\|J_f\varphi\|_{L^\oo_\lambda}\le\|\varphi\|_{L^\oo_\lambda}$, and the formal self-adjointness just mentioned implies $\|J_f\psi\|_{L^1_\lambda}\le\|\psi\|_{L^1_\lambda}$. Thus $J_f$ is an admissible contraction on $(L^1_\lambda,L^\oo_\lambda)$.

Fix a nonnegative $g$ that is also bounded above and supported on some $A\in\mathcal A$ and let $J_g$ be the corresponding operator for $g$, so $(Rg)^o=J_g(Rg)$. Using the formulas  $f^o=Q((Rf)^o)$ and $g^o=Q((Rg)^o)$ given in the proof of Lemma \ref{level}, we have $f^og^o=(QJ_fRf)(QJ_gRg)=Q((J_fRf)(J_gRg))$, by Proposition \ref{Q}\ref{Qj}. So Definition \ref{deflev}, Theorem  \ref{QR}\ref{QRc} and formal self-adjointness yield
\[
\int_Uf^og\,d\mu\le\int_Uf^og^o\,d\mu
=\int_{[0,\oo)}(J_fRf)(J_gRg)\,d\lambda=\int_UfQJ_fJ_gRg\,d\mu.
\]
Since $X'$ is u.r.i., it is an exact interpolation space for $(L^1_\mu,L^\oo_\mu)$. It follows from the comment before Lemma \ref{AM} that $QJ_fJ_gR$ is an admissible contraction from $(L^1_\mu,L^\oo_\mu)$ to itself so $\|QJ_fJ_gRg\|_{X'}\le\|g\|_{X'}$. But $QJ_fJ_gRg=QJ_f(Rg)^o\in L^\downarrow(\mathcal A)$ and therefore,
\[
\int_Uf^og\,d\mu\le\int_{[0,\oo)}fQJ_fJ_gRg\,d\lambda\le\|f\|_{X\!\downarrow}\|g\|_{X'}.
\]
For any $g\in X'$, with $\|g\|_{X'}\le1$, we can express $|g|$ as the pointwise limit of a increasing sequence of functions $g_n$ of the above form. By the monotone convergence theorem,
\[
\int_Uf^o|g|\,d\mu\le\|f\|_{X\!\downarrow}.
\]
Taking the supremum over all such $g$ proves $\|f^o\|_{X''}=\|f\|_{X\!\downarrow}$.

If $X$ has the Fatou property, then $X''=X$ and the last statement follows.
\end{proof}

Next, to help with the $K$-functional for $(L^1_\mu,L^\oo_\mu\!\!\downarrow)$, we build a family of decompositions for functions in $L^1_\mu+L^\oo_\mu\!\!\downarrow$.
\begin{lemma}\label{decomp} For $0\le f\in L^1_\mu+L^\oo_\mu\!\!\downarrow$ there is a map $D_f:[0,\oo)\to L^\downarrow(\mathcal A)$ such that: $0\le D_f(\gamma)\le 1$ for all $\gamma\ge0$; if $\int_Mg\,d\mu=\int_Mf\,d\mu$ for all $M\in \mathcal M$, then $D_f=D_g$ $\mu$-a.e.; and if $f=f_1+f_\oo$ with $0\le f_1\in L^1_\mu$ and $0\le f_\oo\in L^\oo_\mu\!\!\downarrow$, then 
\[
\|D_f(\gamma)f\|_{L^1_\mu}\le\|f_1\|_{L^1_\mu}\quad\text{and}\quad
\|(1-D_f(\gamma))f\|_{L^\oo_\mu\!\downarrow}\le\|f_\oo\|_{L^\oo_\mu\!\downarrow}
\]
when $\gamma=\|f_1\|_{L^1_\mu}$.
\end{lemma}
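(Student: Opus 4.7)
The strategy is to lift the construction to the half-line via the maps $R$ and $Q$ of Section \ref{tailor}, exploit the classical level-function theory on $([0,\oo),\lambda)$, and push the result back to $U$. First I would write $\varphi=Rf$, let $\varphi^o$ denote its level function on $([0,\oo),\lambda)$ (non-negative and pointwise non-increasing), and set
\[
s(\gamma):=\inf\Big\{S\ge0:\int_{[0,\oo)}(\varphi^o-S)^+\,d\lambda\le\gamma\Big\}.
\]
Since $S\mapsto\int(\varphi^o-S)^+\,d\lambda$ is continuous, non-increasing, and vanishes for large $S$, this infimum is attained. Next, define $\theta:=\min(1,s(\gamma)/\varphi^o)$ with the convention $\theta=1$ on $\{\varphi^o=0\}$; then $\theta$ is a Borel, non-decreasing, $[0,1]$-valued function on $[0,\oo)$, so $1-\theta\in L^\downarrow(\mathcal B)$. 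Finally put $D_f(\gamma):=Q(1-\theta)$. Theorem \ref{QR}\ref{QRd} gives $D_f(\gamma)\in L^\downarrow(\mathcal A)$, and positivity of $Q$ together with $Q\chi_{[0,\oo)}=\chi_U$ (obtained from Proposition \ref{Q}\ref{Qi} and \ref{Qc} by taking $\mu(M)\to\oo$) yields $0\le D_f(\gamma)\le1$. The invariance condition follows from Proposition \ref{R}\ref{Rf}: the hypothesis forces $\int_{[0,y]}Rg\,d\lambda=\int_{[0,y]}Rf\,d\lambda$ for all $y\in\Gamma$, so $Rg=Rf$ $\lambda$-a.e. (since $\lambda$ is supported on $\Gamma$), and hence $\varphi^o$, $\theta$, and $D_f(\gamma)$ are unchanged up to $\mu$-a.e.\ equivalence.

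Using Theorem \ref{QR}\ref{QRc}, the $L^1$ norm reduces to $\|D_f(\gamma)f\|_{L^1_\mu}=\int_{[0,\oo)}(1-\theta)\varphi\,d\lambda$, and I would verify this equals $\int(\varphi^o-s)^+\,d\lambda$ by a direct computation: on each Sinnamon level interval $I$ of $\varphi$ (where $\varphi^o\equiv c$) one has $\int_I(1-\theta)\varphi\,d\lambda=(c-s)^+\lambda(I)=\int_I(\varphi^o-s)^+\,d\lambda$, and off the level intervals $\varphi=\varphi^o$ makes the identity pointwise. By the choice of $s$, this is at most $\gamma$. For the down-endpoint norm, using $1-D_f(\gamma)=Q\theta$, Theorem \ref{Loo}\ref{Loob}, Theorem \ref{QR}\ref{QRc}, and $\lambda([0,\mu(M)])=\mu(M)$ from Lemma \ref{lambda}, I would get
\[
\|(1-D_f(\gamma))f\|_{L^\oo_\mu\!\downarrow}=\sup_{M\in\mathcal M}\frac{1}{\mu(M)}\int_{[0,\mu(M)]}\theta\varphi\,d\lambda.
\]
The main obstacle I anticipate is the estimate $\int_0^y\theta\varphi\,d\lambda\le s\,\lambda([0,y])$ for all $y\ge0$: full-level-interval pieces contribute $\min(c,s)\lambda(I)\le s\lambda(I)$ directly, while on a partial level interval $[a,y]\subseteq[a,b]$ of value $c$ the bound reduces to $\int_a^y\varphi\,d\lambda\le c(y-a)$, which follows from $\int_0^y\varphi\le\int_0^y\varphi^o$ together with the transition-point identity $\int_0^a\varphi=\int_0^a\varphi^o$. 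Taking the supremum then gives $\|(1-D_f(\gamma))f\|_{L^\oo_\mu\!\downarrow}\le s$.

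To close, I would show $s(\gamma)\le\|f_\oo\|_{L^\oo_\mu\!\downarrow}$ under the hypothesis $\gamma=\|f_1\|_{L^1_\mu}$. Lemma \ref{AM}\ref{AMb} gives $Rf=Rf_1+Rf_\oo$ with $\|Rf_1\|_{L^1_\lambda}\le\gamma$ and $T:=\|Rf_\oo\|_{L^\oo_\lambda\!\downarrow}\le\|f_\oo\|_{L^\oo_\mu\!\downarrow}$, so $\int_0^x\varphi\,d\lambda\le\gamma+T\lambda([0,x])$ for every $x\ge0$. Since $\int_0^x\varphi^o\,d\lambda$ is the least concave majorant of $\int_0^x\varphi\,d\lambda$ and the two coincide at the transition point where $\varphi^o=T$, the Legendre-type identity
\[
\int_{[0,\oo)}(\varphi^o-T)^+\,d\lambda=\sup_{x\ge0}\Big[\int_0^x\varphi\,d\lambda-T\lambda([0,x])\Big]
\]
yields $\int(\varphi^o-T)^+\,d\lambda\le\gamma$, whence $s(\gamma)\le T\le\|f_\oo\|_{L^\oo_\mu\!\downarrow}$. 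Combined with the preceding paragraph, this delivers both required bounds.
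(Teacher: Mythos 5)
Your construction is essentially sound but takes a genuinely different route from the paper's. The paper never leaves $U$ and never mentions level functions: it sets $\Theta(M)=\int_Mf\,d\mu$, observes via Theorem \ref{M} that $\Theta(\mathcal M)$ is a closed subset of $[0,\oo)$ containing $0$, and defines $D_f(\gamma)$ directly as a convex combination of at most two characteristic functions $\chi_{N_{a_\gamma}}$ and $\chi_{N_{b_\gamma}}$ of core sets chosen in advance so that $\int_{N_x}f\,d\mu=x$, where $a_\gamma$ and $b_\gamma$ are the points of $\Theta(\mathcal M)$ bracketing $\gamma$. All three required properties then follow from the total ordering of $\mathcal M$ by elementary manipulations of integrals; no structure theory for level functions is needed. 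Your approach instead transfers everything to $([0,\oo),\lambda)$ by $R$, thresholds the classical level function $(Rf)^o$ at a cut level $s(\gamma)$, and pulls the resulting multiplier back by $Q$. This yields a more analytic formula for $D_f(\gamma)$ and makes the connection with Theorem \ref{Ko} transparent, but at the price of importing the fine structure of the level function on the half line.

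That price is where care is needed. First, the claim that $S\mapsto\int((Rf)^o-S)^+\,d\lambda$ is continuous and vanishes for large $S$ is false in general (take $(Rf)^o\equiv1$ on a set of infinite $\lambda$-measure: the integral is $\oo$ for $S<1$ and $0$ for $S\ge1$); what you actually need, and what does hold by monotone convergence, is right-continuity, which still gives $\int((Rf)^o-s(\gamma))^+\,d\lambda\le\gamma$. Second, your computations lean on the level-interval representation $(Rf)^o=J_f(Rf)$, on the coincidence of $\int_{[0,x]}Rf\,d\lambda$ and $\int_{[0,x]}(Rf)^o\,d\lambda$ at transition points, and on the least-concave-majorant characterization. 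The paper invokes these from \cite{Stm} only for bounded functions supported on a core set (in the proof of Theorem \ref{downlev}); for a general $0\le f\in L^1_\mu+L^\oo_\mu\!\!\downarrow$ the representation may include an unbounded final level interval $I$ on which $\int_I(Rf)^o\,d\lambda$ strictly exceeds $\int_IRf\,d\lambda$, so your asserted equality $\|D_f(\gamma)f\|_{L^1_\mu}=\int((Rf)^o-s)^+\,d\lambda$ can degrade to the inequality ``$\le$'' (which still suffices), and the ``Legendre-type identity'' should be replaced by the one-sided estimate obtained from the fact that $x\mapsto\gamma+T\lambda([0,x])$ is a concave majorant of $x\mapsto\int_{[0,x]}Rf\,d\lambda$ and hence of its least concave majorant. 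With these repairs the argument closes, but as written these steps are asserted rather than derived from what Section \ref{old} actually supplies, whereas the paper's intrinsic construction avoids them entirely.
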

\begin{proof} For each map $\Theta:\mathcal M\to[0,\oo)$ and each $x\in\Theta(\mathcal M)$ chose  an $N_x\in\mathcal M$ such that $\Theta(N_x)=x$. This a priori choice of $N_x$ will avoid issues with possible incompatible choices later.

Fix a nonnegative $f\in L^1_\mu+L^\oo_\mu\!\!\downarrow$ and suppose $0\le f_1\in L^1_\mu$ and $0\le f_\oo\in L^\oo_\mu\!\!\downarrow$ such that $f=f_1+f_\oo$. For each $M\in\mathcal M$, 
\begin{equation}\label{overM}
\int_Mf\,d\mu=\int_Mf_1\,d\mu+\int_Mf_\oo\,d\mu
\le\|f_1\|_{L^1_\mu}+\|f_\oo\|_{L^\oo_\mu\!\downarrow}\mu(M)<\oo.
\end{equation}
Set $\Theta(M)=\int_Mf\,d\mu$ and let $N_x$, for $x\in\Theta(\mathcal M)$, be those determined above. By Theorem \ref{M}, $\mathcal M$ is closed under countable unions and intersections, showing that $\Theta(\mathcal M)=\{\int_M|f|\,d\mu:M\in\mathcal M\}$ is a closed subset of $[0,\oo)$ containing $0$. For each $\gamma\ge0$, set 
\[
a_\gamma=\sup[0,\gamma]\cap\Theta(\mathcal M)\quad\text{and}\quad b_\gamma=\inf[\gamma,\oo)\cap\Theta(\mathcal M),
\]
where $\inf\emptyset=\oo$. Then $a_\gamma\in\Theta(\mathcal M)$, $b_\gamma\in\Theta(\mathcal M)\cup\{\oo\}$ and either $a_\gamma=\gamma=b_\gamma$ or $a_\gamma<\gamma<b_\gamma\le\oo$.

If $a_\gamma<\gamma<b_\gamma<\oo$, set
\[
D_f{(\gamma)}=\frac{b_\gamma-\gamma}{b_\gamma-a_\gamma}\chi_{N_{a_\gamma}}+\frac{\gamma-a_\gamma}{b_\gamma-a_\gamma}\chi_{N_{b_\gamma}}.
\]
Otherwise, set $D_f{(\gamma)}=\chi_{N_{a_\gamma}}$. Evidently, $D_f(\gamma)\in L^\downarrow(\mathcal A)$ and $0\le D_f(\gamma)\le1$. If $0\le g\in L^1_\mu+L^\oo_\mu\!\!\downarrow$ and $\int_Mg\,d\mu=\int_Mf\,d\mu$ for all $M\in \mathcal M$, then $f$ and $g$ give rise to the same $\Theta$, the same $N_x$ and the same $a_\gamma$ and $b_\gamma$. Therefore $D_f=D_g$. 

To prove the last statement of the lemma, we let $\gamma=\|f_1\|_{L^1_\mu}$ and, for convenience, write $a=a_\gamma$ and $b=b_\gamma$. First we show that $\|D_f{(\gamma)}f\|_{L^1_\mu}\le\|f_1\|_{L^1_\mu}$: If $D_f(\gamma)=\chi_{N_{a}}$, then
\[
\|D_f{(\gamma)}f\|_{L^1_\mu}=\int_{N_{a}}f\,d\mu=a\le\gamma=\|f_1\|_{L^1_\mu}.
\]
Otherwise,
\[
\|D_f{(\gamma)}f\|_{L^1_\mu}=\frac{b-\gamma}{b-a}\int_{N_{a}}f\,d\mu+\frac{\gamma-a}{b-a}\int_{N_{b}}f\,d\mu=\gamma=\|f_1\|_{L^1_\mu}.
\]
On the way to proving that $\|(1-D_f(\gamma))f\|_{L^\oo_\mu\!\downarrow}\le\|f_\oo\|_{L^\oo_\mu\!\downarrow}$ we show that, for all $M\in\mathcal M$, 
\begin{equation}\label{gamma}
\int_M f_1 \,d\mu\le\int_M D_f{(\gamma)}f \,d\mu.
\end{equation}
Fix $M\in\mathcal M$. The definition of $\Theta(\mathcal M)$ ensures that $\int_M f \,d\mu\le a$ or $\int_M f \,d\mu\ge b$. 

Case 1. $a<\gamma<b<\oo$: If $\int_M f \,d\mu\le a$, then
\[
\int_M f \,d\mu\le\int_{N_{a}} f \,d\mu\le\int_{N_{b}} f \,d\mu\ \ \text{so}\ \ 
\int_M f \,d\mu=\int_{M\cap N_{a}} f \,d\mu=\int_{M\cap N_{b}} f \,d\mu,
\]
because $\mathcal M$ is totally ordered. Thus,
\[
\int_M f_1 \,d\mu\le\int_M f \,d\mu=\frac{b-\gamma}{b-a}\int_{M\cap N_{a}} f \,d\mu+\frac{\gamma-a}{b-a}\int_{M\cap N_{b}} f \,d\mu=\int_MD_f{(\gamma)}f \,d\mu.
\]
If $\int_M f \,d\mu\ge b$, then
\[
\int_{N_{a}} f \,d\mu\le\int_{N_{b}} f \,d\mu\le\int_M f \,d\mu.
\]
Since $\mathcal M$ is totally ordered,
\[
a=\int_{N_{a}} f \,d\mu=\int_{M\cap N_{a}} f \,d\mu\quad\text{and}\quad
b=\int_{N_{b}} f \,d\mu=\int_{M\cap N_{b}} f \,d\mu.
\]
Therefore,
\[
\int_M f_1 \,d\mu\le\gamma=\frac{b-\gamma}{b-a}\int_{M\cap N_{a}} f \,d\mu+\frac{\gamma-a}{b-a}\int_{M\cap N_{b}} f \,d\mu=\int_M D_f{(\gamma)}f \,d\mu.
\]

Case 2. $a<\gamma<b<\oo$ fails: If $\int_M f \,d\mu\le a$, then
\[
\int_M f \,d\mu\le\int_{N_{a}} f \,d\mu\quad \text{so}\quad
\int_M f \,d\mu=\int_{M\cap N_{a}} f \,d\mu,
\]
because $\mathcal M$ is totally ordered. Thus,
\[
\int_M f_1 \,d\mu\le\int_M f \,d\mu=\int_{M\cap N_{a}} f \,d\mu=\int_M D_f{(\gamma)}f \,d\mu.
\]
If $\int_M f \,d\mu\ge b$, then $a=\gamma=b$ so, because $\mathcal M$ is totally ordered,
\[
\int_M f_1 \,d\mu\le\gamma=\int_{N_{a}} f \,d\mu=\int_{M\cap N_{a}} f \,d\mu=\int_M D_f{(\gamma)} f\,d\mu.
\]
This completes the proof of \eqref{gamma}. 

Using \eqref{gamma}, we get
\[
\int_M (1-D_f{(\gamma)})f \,d\mu=\int_M f \,d\mu-\int_M D_f{(\gamma)}f \,d\mu\le\int_M f \,d\mu-\int_M f_1 \,d\mu\le\int_M f_\oo \,d\mu
\]
for each $M\in\mathcal M$ and it follows that 
\[
\| (1-D_f{(\gamma)})f\|_{L^\oo_\mu\!\downarrow}\le\| f_\oo\|_{L^\oo_\mu\!\downarrow}.
\]
\end{proof}

The properties of $D_f(\gamma)$ are the key to proving a formula for the $K$-functional of $(L^1_\mu,L^\oo_\mu\!\!\downarrow)$.
\begin{theorem}\label{Ko} Let $0\le f\in L^1_\mu+L^\oo_\mu\!\!\downarrow$ and $t>0$. Then
\[
K(f,t;L^1_\mu,L^\oo_\mu\!\!\downarrow)=K(QRf,t;L^1_\mu,L^\oo_\mu\!\!\downarrow)=K(Rf,t;L^1_\lambda,L^\oo_\lambda\!\!\downarrow)=\int_0^t(f^o)^*.
\]
\end{theorem}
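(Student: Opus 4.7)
The plan is to prove the stated four-way equality as a chain. The rightmost equality $\int_0^t(f^o)^* = K(Rf,t;L^1_\lambda,L^\oo_\lambda\!\!\downarrow)$ is almost immediate: the construction of $f^o$ in the proof of Lemma \ref{level} gives $f^o = Q((Rf)^o)$, and Theorem \ref{QR}\ref{QRd} ensures that $(Q\varphi)^* = \varphi^*$, so $(f^o)^* = ((Rf)^o)^*$; the half-line formula recalled in Section \ref{old} then supplies $K(Rf,t;L^1_\lambda,L^\oo_\lambda\!\!\downarrow) = \int_0^t((Rf)^o)^*$.

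The middle inequalities $K(QRf,t;L^1_\mu,L^\oo_\mu\!\!\downarrow) \le K(Rf,t;L^1_\lambda,L^\oo_\lambda\!\!\downarrow) \le K(f,t;L^1_\mu,L^\oo_\mu\!\!\downarrow)$ are routine applications of \eqref{T} using the admissible contractions of Lemma \ref{AM}\ref{AMa} and \ref{AMb}. This leaves one nontrivial inequality: $K(f,t;L^1_\mu,L^\oo_\mu\!\!\downarrow) \le \int_0^t(f^o)^*$, which is the main obstacle and amounts to lifting a near-optimal half-line decomposition of $Rf$ to one of $f$ itself.

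To do this, given $\epsilon>0$ I would use \eqref{Kpos} to choose a nonnegative decomposition $Rf = \xi_1 + \xi_\oo$ with $\|\xi_1\|_{L^1_\lambda} + t\|\xi_\oo\|_{L^\oo_\lambda\!\downarrow} \le \int_0^t(f^o)^* + \epsilon$. Applying $Q$ yields a nonnegative decomposition $QRf = Q\xi_1 + Q\xi_\oo$, with no increase in norms by Lemma \ref{AM}\ref{AMa}. Set $\gamma = \|Q\xi_1\|_{L^1_\mu}$. By Propositions \ref{R}\ref{Rf} and \ref{Q}\ref{Qf}, $\int_M f\,d\mu = \int_M QRf\,d\mu$ for every $M \in \mathcal M$, so the data that defines $D_f(\gamma)$ and $D_{QRf}(\gamma)$ in Lemma \ref{decomp} coincide and the two maps are identical. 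Applying Lemma \ref{decomp} to the decomposition $QRf = Q\xi_1 + Q\xi_\oo$ then yields $\|D_f(\gamma)QRf\|_{L^1_\mu} \le \|Q\xi_1\|_{L^1_\mu}$ and $\|(1-D_f(\gamma))QRf\|_{L^\oo_\mu\!\downarrow} \le \|Q\xi_\oo\|_{L^\oo_\mu\!\downarrow}$.

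The final step transfers these bounds from $QRf$ to $f$. Since $D_f(\gamma)$ is, by construction, a nonnegative linear combination of characteristic functions of sets in $\mathcal M$, and $\mathcal M$ is closed under intersections by Theorem \ref{M}, each product $D_f(\gamma)\chi_M$ is again such a linear combination. The identity $\int_N f\,d\mu = \int_N QRf\,d\mu$ for $N \in \mathcal M$ therefore upgrades to $\int_M D_f(\gamma)f\,d\mu = \int_M D_f(\gamma)QRf\,d\mu$ and, by subtraction, $\int_M(1-D_f(\gamma))f\,d\mu = \int_M(1-D_f(\gamma))QRf\,d\mu$ for every $M \in \mathcal M$. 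Letting $M$ range over an exhausting sequence from $\mathcal A$ (available by fullness) and invoking monotone convergence gives the equality $\|D_f(\gamma)f\|_{L^1_\mu} = \|D_f(\gamma)QRf\|_{L^1_\mu}$, while the characterization of $\|\cdot\|_{L^\oo_\mu\!\downarrow}$ in Theorem \ref{Loo}\ref{Loob} gives the corresponding equality for $(1-D_f(\gamma))f$ and $(1-D_f(\gamma))QRf$. Hence the decomposition $f = D_f(\gamma)f + (1-D_f(\gamma))f$ inherits the same norm bounds, and $K(f,t;L^1_\mu,L^\oo_\mu\!\!\downarrow) \le \|Q\xi_1\|_{L^1_\mu} + t\|Q\xi_\oo\|_{L^\oo_\mu\!\downarrow} \le \int_0^t(f^o)^* + \epsilon$. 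Letting $\epsilon \to 0$ closes the chain.
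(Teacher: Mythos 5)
Your argument is essentially the paper's, assembled in a different order: both rest on Lemma \ref{decomp} together with the identity $\int_M f\,d\mu=\int_M QRf\,d\mu$ for $M\in\mathcal M$, and on transferring the $L^1_\mu$ and $L^\oo_\mu\!\!\downarrow$ norms of $D_f(\gamma)f$ and $(1-D_f(\gamma))f$ over to $QRf$. The paper establishes the transfer via the self-adjointness identity of Theorem \ref{QR}\ref{QRc} (writing $\int_U D_f(\gamma)QRf\,d\mu=\int_U fQRD_f(\gamma)\,d\mu=\int_U fD_f(\gamma)\,d\mu$), whereas you use the explicit form of $D_f(\gamma)$ as a combination of characteristic functions of sets in $\mathcal M$ and closure of $\mathcal M$ under intersections; both work. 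The paper then deduces $K(f,t)=K(QRf,t)$ from the formula $K(\cdot,t)=\inf_{\gamma\ge0}\|D_\cdot(\gamma)\cdot\|_{L^1_\mu}+t\|(1-D_\cdot(\gamma))\cdot\|_{L^\oo_\mu\!\downarrow}$ and sandwiches $K(Rf,t)$ in between, while you prove $K(f,t)\le\int_0^t(f^o)^*$ directly by lifting a near-optimal half-line decomposition. Your route correctly identifies three of the four quantities, but as written it leaves a loose end: you only ever obtain $K(QRf,t;L^1_\mu,L^\oo_\mu\!\!\downarrow)\le K(Rf,t;L^1_\lambda,L^\oo_\lambda\!\!\downarrow)$, never the reverse, so the second equality in the statement is not actually established. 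The fix is one line: $RQRf=Rf$ $\lambda$-a.e.\ by Theorem \ref{QR}\ref{QRa}, so one more application of \eqref{T} with the contraction $R$ of Lemma \ref{AM}\ref{AMb} gives $K(Rf,t;L^1_\lambda,L^\oo_\lambda\!\!\downarrow)=K(RQRf,t;L^1_\lambda,L^\oo_\lambda\!\!\downarrow)\le K(QRf,t;L^1_\mu,L^\oo_\mu\!\!\downarrow)$. (Alternatively, observe that your transfer identities hold for every $\gamma$, not just $\gamma=\|Q\xi_1\|_{L^1_\mu}$, so \eqref{Kpos} and Lemma \ref{decomp} applied to $QRf$ yield $K(QRf,t)=K(f,t)$ outright, which is the paper's argument.) Everything else checks out.
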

\begin{proof} It follows from \eqref{Kpos} and Lemma \ref{decomp} that
\begin{equation}\label{K}
K(f,t;L^1_\mu,L^\oo_\mu\!\!\downarrow)=\inf_{\gamma\ge0}\| D_f{(\gamma)}\| _{L^1_\mu}+t\| (1-D_f{(\gamma)})f\|_{L^\oo_\mu\!\downarrow}.
\end{equation}
By \eqref{QRflip}, $\int_MQRf\,d\mu=\int_Mf\,d\mu$ for all $M\in \mathcal M$. So by Lemma \ref{decomp}, $D_{QRf}=D_f$ and $D_f\in L^+(\mathcal A)$. It follows that $(1-D_f)\chi_M\in L^+(\mathcal A)$ so, by Propositions \ref{R}\ref{Rc} and \ref{Q}\ref{Qc}, and Theorem \ref{QR}\ref{QRb},
\[
\int_UD_f(\gamma)QRf\,d\mu=\int_UfQR(D_f(\gamma))\,d\mu=\int_UfD_f(\gamma)\,d\mu
\]
and, for each $M\in\mathcal M$,
\[
\int_M(1-D_f(\gamma))QRf\,d\mu
=\int_UfQR((1-D_f(\gamma))\chi_M)\,d\mu=\int_Mf(1-D_f(\gamma))\,d\mu.
\]
We conclude that
\begin{align*}
\|D_f(\gamma)QRf\|_{L^1_\mu}&=\|D_f(\gamma)f\|_{L^1_\mu}\quad\text{and}\\
\|(1-D_f(\gamma))QRf\|_{L^\oo_\mu\!\downarrow}&=\|(1-D_f(\gamma))f\|_{L^\oo_\mu\!\downarrow}.
\end{align*}
Using these in the right-hand side of \eqref{K}, we get 
\[
K(f,t;L^1_\mu,L^\oo_\mu\!\!\downarrow)=K(QRf,t;L^1_\mu,L^\oo_\mu\!\!\downarrow).
\]

Using Lemma \ref{AM}, two applications of \eqref{T}, yield
\[
K(QRf,t;L^1_\mu,L^\oo_\mu\!\!\downarrow)\le K(Rf,t;L^1_\lambda,L^\oo_\lambda\!\!\downarrow)\le K(f,t;L^1_\mu,L^\oo_\mu\!\!\downarrow)
\]
so we have equality throughout.

Since $f\in L^+(\Sigma)$, $Rf\in L^+(\mathcal B)$ by Proposition \ref{R}\ref{Rb}. Let $(Rf)^o\in L^\downarrow(\mathcal B)$ be the level function of Section \ref{old}. From the proof of Lemma \ref{level}, $f^o=Q((Rf)^o)\in L^\downarrow(\mathcal A)$ is a level function of $f$. By Theorem \ref{QR}\ref{QRd}, $((Rf)^o)^*=(f^o)^*$, so from Section \ref{old}, we have
\[
K(Rf,t;L^1_\lambda,L^\oo_\lambda\!\!\downarrow)=\int_0^t((Rf)^o)^*=\int_0^t(f^o)^*.
\]
This completes the proof.
\end{proof}

Calculation of the $K$-functional for $(\widetilde{L^1_\mu},L^\oo_\mu)$ relies on properties of the rearrangement. 
\begin{theorem}\label{Kt} If $0\le g\in \widetilde{L^1_\mu}+L^\oo_\mu$ and $t>0$, then
\[
K(QRg,t;\widetilde{L^1_\mu},L^\oo_\mu)\le K(Rg,t;\widetilde{L^1_\lambda},L^\oo_\lambda)\le K(g,t;\widetilde{L^1_\mu},L^\oo_\mu)=\int_0^t(\widetilde g)^*,
\]
with equality throughout when $g\in L^+(\mathcal A)$.
\end{theorem}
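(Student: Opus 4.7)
The plan is to establish the two admissible-contraction inequalities from Lemma~\ref{AM}, compute the right-most $K$-functional $K(g,t;\widetilde{L^1_\mu},L^\oo_\mu)$ directly, and then invoke $QRg=g$ for $g\in L^+(\mathcal A)$ to collapse the chain to equalities. By Lemma~\ref{AM}\ref{AMd}, $R$ is an admissible contraction from $(\widetilde{L^1_\mu},L^\oo_\mu)$ to $(\widetilde{L^1_\lambda},L^\oo_\lambda)$, and by Lemma~\ref{AM}\ref{AMc}, $Q$ is an admissible contraction from $(\widetilde{L^1_\lambda},L^\oo_\lambda)$ to $(\widetilde{L^1_\mu},L^\oo_\mu)$. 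Two applications of \eqref{T} immediately yield
\[
K(QRg,t;\widetilde{L^1_\mu},L^\oo_\mu)\le K(Rg,t;\widetilde{L^1_\lambda},L^\oo_\lambda)\le K(g,t;\widetilde{L^1_\mu},L^\oo_\mu).
\]

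For the lower bound of the equality, I would fix any decomposition $g=g_1+g_\oo$ with $g_1,g_\oo\ge 0$, permitted by \eqref{Kpos}. Then $g\le\widetilde{g_1}+\|g_\oo\|_{L^\oo_\mu}$ $\mu$-a.e., so the right-hand side is a core decreasing majorant of $g$ and therefore dominates $\widetilde g$. Setting $h_1=\min(\widetilde g,\widetilde{g_1})$ and $h_\oo=\widetilde g-h_1$ gives a nonnegative decomposition of $\widetilde g$ satisfying $\|h_1\|_{L^1_\mu}\le\|g_1\|_{\widetilde{L^1_\mu}}$ and $\|h_\oo\|_{L^\oo_\mu}\le\|g_\oo\|_{L^\oo_\mu}$. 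Taking the infimum and using the classical identity $K(\widetilde g,t;L^1_\mu,L^\oo_\mu)=\int_0^t(\widetilde g)^*$ from Section~\ref{oldCC} then gives $\int_0^t(\widetilde g)^*\le K(g,t;\widetilde{L^1_\mu},L^\oo_\mu)$.

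The main obstacle is the reverse inequality, since decompositions must be built out of $g$ itself rather than $\widetilde g$. My plan is to lift the classical truncation decomposition: for each $s\ge 0$, set $g_\oo=\min(g,s)$ and $g_1=(g-s)_+$, so that $g=g_1+g_\oo$ and $\|g_\oo\|_{L^\oo_\mu}\le s$. Since $g\le\widetilde g$ $\mu$-a.e.\ and the function $(\widetilde g-s)_+$ is core decreasing (being a nondecreasing Borel function of the core decreasing $\widetilde g$), we have $g_1\le(\widetilde g-s)_+$, whence $\widetilde{g_1}\le(\widetilde g-s)_+$ and $\|g_1\|_{\widetilde{L^1_\mu}}\le\int_U(\widetilde g-s)_+\,d\mu$. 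Therefore
\[
K(g,t;\widetilde{L^1_\mu},L^\oo_\mu)\le\int_U(\widetilde g-s)_+\,d\mu+ts\quad\text{for every }s\ge 0,
\]
and taking the infimum over $s\ge 0$ brings the right-hand side down to $\int_0^t(\widetilde g)^*$ by the standard truncation formula for the $(L^1_\mu,L^\oo_\mu)$ $K$-functional of $\widetilde g$.

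Finally, if $g\in L^+(\mathcal A)$, then Theorem~\ref{QR}\ref{QRb} gives $QRg=g$ $\mu$-a.e., so $K(QRg,t;\widetilde{L^1_\mu},L^\oo_\mu)=K(g,t;\widetilde{L^1_\mu},L^\oo_\mu)$, which, combined with the chain of inequalities already established, forces equality at every step.
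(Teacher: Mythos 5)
Your proposal is correct, and its skeleton matches the paper's: the two outer inequalities come from Lemma \ref{AM} and \eqref{T}, the case $g\in L^+(\mathcal A)$ collapses via $QRg=g$ from Theorem \ref{QR}, and the upper bound $K(g,t;\widetilde{L^1_\mu},L^\oo_\mu)\le\int_0^t(\widetilde g)^*$ is the same truncation argument (the paper simply fixes $s=(\widetilde g)^*(t)$ and evaluates $\int_U(\widetilde g-s)_+\,d\mu$ via \cite{BS}*{Exercise 2.1}, rather than taking an infimum over all $s$; it also notes the exact identity $\widetilde{(g-s)_+}=(\widetilde g-s)_+$, of which you only need the inequality you prove). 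Where you genuinely diverge is the lower bound. The paper estimates $\int_0^t(\widetilde{g_1}+\widetilde{g_\oo})^*$ directly using the rearrangement inequality for sums from \cite{BS}*{Proposition 2.1.7}, splitting at $(1-\varepsilon)s$ and $\varepsilon s$ and letting $\varepsilon\to0$. You instead convert the given decomposition $g=g_1+g_\oo$ into a decomposition $\widetilde g=h_1+h_\oo$ with $h_1=\min(\widetilde g,\widetilde{g_1})\in L^1_\mu$ and $h_\oo=(\widetilde g-\widetilde{g_1})_+\le\|g_\oo\|_{L^\oo_\mu}$, and then quote the classical formula $K(\widetilde g,t;L^1_\mu,L^\oo_\mu)=\int_0^t(\widetilde g)^*$. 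This is clean and avoids the dilation/limit argument entirely; its only cost is that it routes through the $(L^1_\mu,L^\oo_\mu)$ $K$-functional identity on a general $\sigma$-finite measure space, which the paper has already put on record in Section \ref{oldCC}, so nothing is lost. All the small verifications you leave implicit check out: $\widetilde{g_1}+\|g_\oo\|_{L^\oo_\mu}$ and $(\widetilde g-s)_+$ are core decreasing, and $h_\oo\le\|g_\oo\|_{L^\oo_\mu}$ follows from $\widetilde g\le\widetilde{g_1}+\|g_\oo\|_{L^\oo_\mu}$.
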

\begin{proof} Let $0\le g\in\widetilde{L^1_\mu}+L^\oo_\mu$. Using Lemma \ref{AM}, two applications of \eqref{T}, yield the first two inequalities above. They hold with equality if $g\in L^+(\mathcal A)$, because Theorem \ref{QR}\ref{QRb} applies and we have $QRg=g$. It remains to prove that $K(g,t;\widetilde{L^1_\mu},L^\oo_\mu)=\int_0^t(\widetilde g)^*$.

Suppose $0\le g_1\in\widetilde{L^1_\mu}$, $0\le g_\oo\in L^\oo_\mu$, and $g=g_1+g_\oo$. Sublinearity of the least core decreasing majorant, see Theorem \ref{props}, shows that $\widetilde g\le \widetilde{g_1}+\widetilde{g_\oo}$. By \cite{BS}*{Proposition 2.1.7}, for every $\varepsilon\in(0,1)$,
\[
\int_0^t(\widetilde g)^*(s)\,ds\le\int_0^t(\widetilde{g_1}+\widetilde{g_\oo})^*(s)
\,ds\le\int_0^t(\widetilde{g_1})^*((1-\varepsilon)s)\,ds+\int_0^t(\widetilde{g_\oo})^*(\varepsilon s)\,ds.
\]
Now
\[
\int_0^t(\widetilde{g_1})^*((1-\varepsilon)s)\,ds=\frac1{1-\varepsilon}\int_0^{(1-\varepsilon)t}(\widetilde{g_1})^*(s)\,ds\le\frac1{1-\varepsilon}\int_U\widetilde{g_1}\,d\mu=\frac1{1-\varepsilon}\|g_1\|_{\widetilde{L^1_\mu}}
\]
and
\[
\int_0^t(\widetilde{g_\oo})^*(\varepsilon s)\,ds\le t\|g_\oo\|_{L^\oo_\mu}.
\]
Letting $\varepsilon\to0$ we get
\[
\int_0^t(\widetilde g)^*(s)\,ds\le\|g_1\|_{\widetilde{L^1_\mu}}+t\|g_\oo\|_{L^\oo_\mu}
\]
and, taking the infimum over all such decompositions of $g$, we conclude that
\[
\int_0^t(\widetilde g)^*\le K(g,t;\widetilde{L^1_\mu},L^\oo_\mu).
\]

For the reverse inequality, fix $t>0$ and set $y=(\widetilde g)^*(t)$. Let $g_1=\max(0,g-y)$. Since $g\le\widetilde g\in L^\downarrow(\mathcal A)$,  $g_1\le\max(0,\widetilde g-y)\in L^\downarrow(\mathcal A)$. If $g_1\le h\in L^\downarrow(\mathcal A)$, then $g\le h+y\in L^\downarrow(\mathcal A)$ so $\widetilde g\le h+y$. This shows that $\widetilde {g_1}=\max(0,\widetilde g-y)$. Since $(\widetilde g)^*\ge y$ on $[0,t]$ and $(\widetilde g)^*\le y$ on $[t,\oo)$, we can apply \cite{BS}*{Exercise 2.1}, to get
\[
\|g_1\|_{\widetilde{L^1_\mu}}=\int_U\max(0,\widetilde g-y)\,d\mu
=\int_0^\oo\max(0,(\widetilde g)^*-y)=\int_0^t(\widetilde g)^*- y.
\] 

Evidently, $g-g_1=\min(g,y)\le y$ so $\|g-g_1\|_{L^\oo_\mu}\le y$ and we conclude that
\[
K(g,t;\widetilde{L^1_\mu},L^\oo_\mu)\le\int_0^t((\widetilde g)^*- y)+ty=\int_0^t(\widetilde g)^*.
\]

\end{proof}

The next two theorems show that both $(L^1_\mu,L^\oo_\mu\!\!\downarrow)$ and $(\widetilde{L^1_\mu},L^\oo_\mu)$ are exact Calder\'on couples.
\begin{theorem}\label{CClev} $(L^1_\mu,L^\oo_\mu\!\!\downarrow)$ is an exact Calder\'on couple.
\end{theorem}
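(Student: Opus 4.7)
The plan is to transfer the exact Calder\'on couple property of $(L^1_\lambda,L^\oo_\lambda\!\!\downarrow)$, recalled in Section \ref{old}, to $(L^1_\mu,L^\oo_\mu\!\!\downarrow)$ using the maps $R$ and $Q$, and then correct for the fact that $QR$ is not the identity.

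Given $f,g\in L^1_\mu+L^\oo_\mu\!\!\downarrow$ with $K(f,t;L^1_\mu,L^\oo_\mu\!\!\downarrow)\le K(g,t;L^1_\mu,L^\oo_\mu\!\!\downarrow)$ for all $t>0$, I would first reduce to $f,g\ge 0$. Pointwise multiplication by any function of modulus at most $1$ is an admissible contraction on the couple, since it contracts $\|\cdot\|_{L^1_\mu}$ and, by Theorem \ref{Loo}\ref{Loob}, also $\|\cdot\|_{L^\oo_\mu\!\downarrow}$; the multipliers $\sgn(f)$ and $\sgn(g)$ can therefore be absorbed into the final operator. Under this reduction, Theorem \ref{Ko} converts the hypothesis into $K(Rf,t;L^1_\lambda,L^\oo_\lambda\!\!\downarrow)\le K(Rg,t;L^1_\lambda,L^\oo_\lambda\!\!\downarrow)$, so the Calder\'on property on the half line delivers an admissible contraction $T$ on $(L^1_\lambda,L^\oo_\lambda\!\!\downarrow)$ with $T(Rg)=Rf$. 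By Lemma \ref{AM} the composite $W_0=QTR$ is an admissible contraction on $(L^1_\mu,L^\oo_\mu\!\!\downarrow)$, and $W_0g=QRf$.

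The main obstacle is that $QRf$ equals $f$ only when $f$ is $\sigma(\mathcal A)$-measurable, so I need one more admissible contraction $S$ on $(L^1_\mu,L^\oo_\mu\!\!\downarrow)$ with $S(QRf)=f$. The natural candidate is
\[
S\phi=\frac{f\cdot QR\phi}{QRf},
\]
extended by $0$ on $\{QRf=0\}$. The key identity is that, for every nonnegative $\sigma(\mathcal A)$-measurable $\eta$,
\[
\int_U\eta f\,d\mu=\int_U\eta\,QRf\,d\mu,
\]
which follows from \eqref{QRflip} on $\mathcal M$ together with the fact that $\mathcal M$ is a $\pi$-system generating $\sigma(\mathcal A)=\sigma(\mathcal M)$ by Theorem \ref{M}.

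Applying this identity with $\eta=\chi_M|QR\phi|/QRf$ yields $\int_M|S\phi|\,d\mu\le\int_M|QR\phi|\,d\mu$ for every $M\in\mathcal M$, which combined with the contraction bounds $\|QR\phi\|_{L^1_\mu}\le\|\phi\|_{L^1_\mu}$ and $\|QR\phi\|_{L^\oo_\mu\!\downarrow}\le\|\phi\|_{L^\oo_\mu\!\downarrow}$ from Lemma \ref{AM} shows that $S$ is an admissible contraction on the couple. Since $QR$ is idempotent, $S(QRf)=f\chi_{\{QRf>0\}}$, and applying the identity to $\eta=\chi_{\{QRf=0\}}$ gives $\int_{\{QRf=0\}}f\,d\mu=0$, forcing $f=0$ there $\mu$-a.e.; thus $S(QRf)=f$ $\mu$-a.e., and $W=S\circ W_0$ is the desired admissible contraction with $Wg=f$. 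The main technical difficulty will be establishing the identity rigorously (in particular, justifying the $\pi$-system argument for $\sigma$-finite measures and handling the $\{QRf=0\}$ set cleanly).
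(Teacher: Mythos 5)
Your proposal is correct and follows essentially the same route as the paper: reduce to nonnegative $f,g$ via $\sgn$ multipliers, transfer the $K$-functional hypothesis to $(L^1_\lambda,L^\oo_\lambda\!\!\downarrow)$ through Theorem \ref{Ko}, invoke the half-line Calder\'on result, and return via $Q$ composed with a multiplication-type correction. The only cosmetic difference is that the paper's correction operator $W\psi=fQ(w\psi)$ with $w=1/Rf$ divides on the half line before applying $Q$ (and verifies $WRf=f$ by comparing integrals over $\mathcal M$), whereas yours divides by $QRf$ on $U$ afterward; both hinge on the same identity $\int_M QRf\,d\mu=\int_M f\,d\mu$ from \eqref{QRflip}, and your $\pi$-system argument for extending it to all nonnegative $\sigma(\mathcal A)$-measurable $\eta$ is legitimate since $\mathcal M$ is a totally ordered (hence intersection-closed) generating family containing an increasing exhaustion of $U$ on which both measures are finite.
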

\begin{proof} Suppose $f,g\in L^1_\mu+L^\oo_\mu\!\!\downarrow$ satisfy $K(f,t;L^1_\mu,L^\oo_\mu\!\!\downarrow)\le K(g,t;L^1_\mu,L^\oo_\mu\!\!\downarrow)$ for all $t>0$. To show that $(L^1_\mu,L^\oo_\mu\!\!\downarrow)$ is an exact Calder\'on couple we need to find an admissible contraction from $(L^1_\mu,L^\oo_\mu\!\!\downarrow)$ to itself that sends $g$ to $f$. 

Write $f=(\sgn f)|f|$ and $|g|=(\sgn g)g$ and observe that the maps $h\mapsto (\sgn g)h$ and $h\mapsto (\sgn f)h$ are both admissible contractions from $(L^1_\mu,L^\oo_\mu\!\!\downarrow)$ to itself. The first takes $g\to|g|$ and the second takes $|f|\to f$. It remains to find an admissible contraction that sends $|g|$ to $|f|$. This reduces the problem to the case $f\ge0$ and $g\ge0$, which we assume in what follows.

We will exhibit three admissible contractions, mapping $g$ to $Rg$, then $Rg$ to $Rf$, and finally $Rf$ to $f$. By Lemma \ref{AM}, $R$ is an admissible contraction from  $(L^1_\mu,L^\oo_\mu\!\!\downarrow)$ to $(L^1_\lambda,L^\oo_\lambda\!\!\downarrow)$ so the step that maps $g$ to $Rg$ just uses $R$. 

By Theorem \ref{Ko},
\[
K(Rf,t;L^1_\lambda,L^\oo_\lambda\!\!\downarrow)\le K(Rg,t;L^1_\lambda,L^\oo_\lambda\!\!\downarrow).
\]
But $(L^1_\lambda,L^\oo_\lambda\!\!\downarrow)$ is a Calder\'on couple, see Section \ref{old}, so there is an admissible contraction $H$ from $(L^1_\lambda,L^\oo_\lambda\!\!\downarrow)$ to itself such that $HRg=Rf$, and the second step is complete.

By Proposition \ref{R}\ref{Rb}, $Rf\ge0$. For $x\ge0$, let $w(x)=1/Rf(x)$ when $Rf(x)\ne0$ and $w(x)=0$ when $Rf(x)=0$. Set $W\psi=fQ(w\psi)$ for all nonnegative $\psi\in L^1_\lambda+L^\oo_\lambda\!\!\downarrow$. Evidently, $W$ is additive and positive homogeneous. For each $M\in\mathcal M$,
\[
\int_MW\psi\,d\mu=\int_{[0,\mu(M)]}(Rf)w\psi\,d\lambda\le\int_{[0,\mu(M)]}\psi\,d\lambda.
\]
Since $\mathcal A$ is a full ordered core, the monotone convergence theorem shows 
\begin{equation}\label{W1}
\|W\psi\|_{L^1_\mu}\le\|\psi\|_{L^1_\lambda}.
\end{equation}
By Section \ref{old}, Theorem \ref{Loo}\ref{Loob}, and by Lemma \ref{lambda}, which shows $\lambda([0,\mu(M)])=\mu(M)$, we also have  
\begin{equation}\label{Woo}
\|W\psi\|_{L^\oo_\mu\!\downarrow}\le\|\psi\|_{L^\oo_\lambda\!\downarrow}.
\end{equation}
A standard argument shows that $W$ extends to be a linear operator from $L^1_\lambda+L^\oo_\lambda\!\!\downarrow$ to $L^1_\mu+L^\oo_\mu\!\!\downarrow$, and the norm inequalities \eqref{W1} and \eqref{Woo} remain valid. The extended $W$ is an admissible contraction from $(L^1_\lambda,L^\oo_\lambda\!\!\downarrow)$ to $(L^1_\mu,L^\oo_\mu\!\!\downarrow)$.  To see that $WRf=f$ we observe that $wRf\le1$ so $WRf=fQ(wRf)\le f$ by Proposition \ref{Q}\ref{Qh}. Also for each $M\in\mathcal M$,
\[
\int_MWRf\,d\mu=\int_{[0,\mu(M)]}(Rf)wRf\,d\lambda=\int_{[0,\mu(M)]}Rf\,d\lambda=\int_Mf\,d\mu.
\]
Since $\mathcal A$ is a full ordered core, it follows that $WRf=f$, $\mu$-a.e. Thus, the third step uses the extended map $W$.

Composing these maps, we see that $WHR$ is an admissible contraction from $(L^1_\mu,L^\oo_\mu\!\!\downarrow)$ to itself such that $WHRg=f$.
\end{proof}

\begin{theorem}\label{CClcdm} $(\widetilde{L^1_\mu},L^\oo_\mu)$ is an exact Calder\'on couple.
\end{theorem}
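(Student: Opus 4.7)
The plan is to parallel the three-step proof of Theorem \ref{CClev}: sign reduction, transfer through the half-line couple $(\widetilde{L^1_\lambda},L^\oo_\lambda)$ (exact Calder\'on by Section \ref{old}), and return to $U$ via an exit map. First, the sign multipliers $h\mapsto(\sgn g)h$ and $h\mapsto(\sgn f)h$ are admissible contractions on $(\widetilde{L^1_\mu},L^\oo_\mu)$, since $|(\sgn g)h|\le|h|$ forces $\widetilde{(\sgn g)h}\le\widetilde h$ and contracts both component norms. After this reduction to $f,g\ge 0$, Theorem \ref{Kt} converts the hypothesis into $\int_0^t(\widetilde f)^*\le\int_0^t(\widetilde g)^*$ for all $t>0$.

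Because $\widetilde f,\widetilde g\in L^\downarrow(\mathcal A)\subseteq L^+(\mathcal A)$, Theorem \ref{Kt} applies with equality throughout to them, giving $K(R\widetilde f,t;\widetilde{L^1_\lambda},L^\oo_\lambda)=\int_0^t(\widetilde f)^*$ and the analogous identity for $\widetilde g$. Thus $K(R\widetilde f,\cdot)\le K(R\widetilde g,\cdot)$ on the half-line couple, and the Calder\'on property from Section \ref{old} supplies an admissible contraction $H$ on $(\widetilde{L^1_\lambda},L^\oo_\lambda)$ with $HR\widetilde g=R\widetilde f$. For the exit step, define $W\psi=\beta Q\psi$ with $\beta=f/\widetilde f$ on $\{\widetilde f>0\}$ and $0$ elsewhere, so that $0\le\beta\le 1$ and $\beta\widetilde f=f$; then $\|W\psi\|_{L^\oo_\mu}\le\|Q\psi\|_{L^\oo_\mu}\le\|\psi\|_{L^\oo_\lambda}$ by Proposition \ref{Q}\ref{Qh}, and $\|W\psi\|_{\widetilde{L^1_\mu}}\le\|\widetilde{Q\psi}\|_{L^1_\mu}\le\|\psi\|_{\widetilde{L^1_\lambda}}$ by Lemma \ref{AM}\ref{AMc}; moreover $WR\widetilde f=\beta QR\widetilde f=\beta\widetilde f=f$ by Theorem \ref{QR}\ref{QRb}.

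The technical crux is constructing an entry map $V:(\widetilde{L^1_\mu},L^\oo_\mu)\to(\widetilde{L^1_\lambda},L^\oo_\lambda)$ with $Vg=R\widetilde g$, so that $WHV$ sends $g\mapsto R\widetilde g\mapsto R\widetilde f\mapsto f$. The naive $V=R$ fails because $Rg$ is strictly dominated by $R\widetilde g$ in general; on $U=\{a,b\}$ with $\mathcal A=\{\emptyset,U\}$ and $g=\chi_{\{a\}}$, for instance, $Rg=\frac{1}{2}\chi_{\{2\}}$ while $R\widetilde g=\chi_{\{2\}}$. My plan is to build $V$ in the spirit of $W$, taking $Vh=R(\alpha h)$ for a bounded $\sigma(\mathcal A)$-measurable $\alpha$ with $\alpha g=\widetilde g$ on $\{g>0\}$, extended across $\{g=0\}$ so that admissibility is preserved; where the support of $\widetilde g$ strictly exceeds that of $g$, the multiplicative construction must be augmented by a rank-one correction involving a functional $\ell\in L^1_\mu\cap L^\oo_\mu\!\!\downarrow$ satisfying $\ell(g)=1$, whose existence and appropriate norm bounds follow from the associate-space identities in Theorem \ref{props} (notably $(L^\oo_\mu\!\!\downarrow)'=\widetilde{L^1_\mu}$). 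Once $V$ is in place, $S=WHV$ is the desired admissible contraction on $(\widetilde{L^1_\mu},L^\oo_\mu)$ with $Sg=f$, completing the proof.
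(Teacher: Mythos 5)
Your architecture matches the paper's: reduce to $f,g\ge0$, use Theorem \ref{Kt} to translate the $K$-functional hypothesis into $\int_0^t(\widetilde f)^*\le\int_0^t(\widetilde g)^*$, invoke the exact Calder\'on property of $(\widetilde{L^1_\lambda},L^\oo_\lambda)$ from Section \ref{old} to get $H$ with $HR\widetilde g=R\widetilde f$, and exit by multiplying by $f/\widetilde f$ (your $W$ is the paper's $W_2\circ Q$, and its norm estimates are correct). The problem is exactly where you locate it: the entry map taking $g$ to $\widetilde g$ (equivalently to $R\widetilde g$). Your plan does not work as stated. A bounded $\sigma(\mathcal A)$-measurable $\alpha$ with $\alpha g=\widetilde g$ on $\{g>0\}$ need not exist: the ratio $\widetilde g/g$ is typically unbounded \emph{on the support of} $g$, not merely undefined off it (take $g$ on $[0,1]$ with Lebesgue measure and the standard core, oscillating between $1$ and values arbitrarily close to $0$, so that $\widetilde g=\chi_{[0,1]}$). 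So the difficulty is not confined to the set where $\widetilde g>0=g$, and a rank-one correction supported there cannot repair it. Moreover, even on that set the proposed correction $h\mapsto\ell(h)\cdot(\text{something})$ is only sketched; you would need to verify that the sum of the multiplicative piece and the correction is simultaneously contractive on $\widetilde{L^1_\mu}$ and on $L^\oo_\mu$, and the bound you would need on $\ell$, namely $|\ell(h)|\le\min\bigl(c_1\|h\|_{L^\oo_\mu},\,c_2\|h\|_{\widetilde{L^1_\mu}}\bigr)$ with $\ell(g)=1$, is a minimum of two norms and hence not sublinear, so the Hahn--Banach theorem does not produce such an $\ell$ directly.

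The paper's resolution (following \cite{MS2}*{Theorem 2.3}) abandons multipliers entirely for this step and uses the Hahn--Banach--Kantorovich theorem with the \emph{vector-valued} sublinear majorant $h\mapsto\widetilde h$ (sublinearity is Theorem \ref{props}). Define $W_1$ on the one-dimensional subspace $\mathbb Rg$ by $W_1(\alpha g)=\alpha\widetilde g$; then $W_1(\alpha g)\le\widetilde{\alpha g}$ there, so $W_1$ extends to a linear map on $\widetilde{L^1_\mu}+L^\oo_\mu$ with $W_1h\le\widetilde h$ everywhere. Evaluating at $-h$ gives $|W_1h|\le\widetilde h$, whence $\|W_1h\|_{L^\oo_\mu}\le\|h\|_{L^\oo_\mu}$ and $\|W_1h\|_{\widetilde{L^1_\mu}}\le\|h\|_{\widetilde{L^1_\mu}}$, so $W_1$ is an admissible contraction with $W_1g=\widetilde g$; the full map is then $W_2QHRW_1$. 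If you replace your entry-map sketch with this Hahn--Banach--Kantorovich argument, the rest of your proof goes through.
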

\begin{proof} Suppose $f,g\in L^1_\mu+L^\oo_\mu\!\!\downarrow$ satisfy $K(f,t;\widetilde{L^1_\mu},L^\oo_\mu)\le K(g,t;\widetilde{L^1_\mu},L^\oo_\mu)$ for all $t>0$. To show that $(\widetilde{L^1_\mu},L^\oo_\mu)$ is an exact Calder\'on couple we need to find an admissible contraction from $(\widetilde{L^1_\mu},L^\oo_\mu)$ to itself that sends $g$ to $f$. 

As in the proof of Theorem \ref{CClev}, we may assume that $f$ and $g$ are nonnegative.  

Since $\widetilde g\in L^+(\mathcal A)$, $\widetilde{\widetilde f\phantom{.}}\!=\widetilde f$, and $\widetilde{\widetilde g}=\widetilde g$, Theorem \ref{Kt} shows that
\[
K(R\widetilde f,t;\widetilde{L^1_\lambda},L^\oo_\lambda)=\int_0^t(\widetilde f)^*\le\int_0^t(\widetilde g)^*=K(R\widetilde g,t;\widetilde{L^1_\lambda},L^\oo_\lambda).
\]
From Section \ref{old}, $(\widetilde{L^1_\lambda},L^\oo_\lambda)$ is a Calder\'on couple so there exists an admissible contraction $H$ from $(\widetilde{L^1_\lambda},L^\oo_\lambda)$ to itself such that $HR\widetilde g=R\widetilde f$. Lemma \ref{AM} implies that $QHR$ is an admissible contraction from $(\widetilde{L^1_\mu},L^\oo_\mu)$ to itself and, since $\widetilde f\in L^\downarrow(\mathcal A)$, Theorem \ref{QR}\ref{QRb} shows that $QHR\widetilde g=QR\widetilde f=\widetilde f$. 

It remains to find admissible contractions $W_1$ and $W_2$, from $(\widetilde{L^1_\mu},L^\oo_\mu)$ to itself, such that $W_2\widetilde f=f$ and $W_1g=\widetilde g$, since then $W_2QHRW_1$ will be the desired map.

Let $\theta(s)=f(s)/\widetilde f(s)$ when $\widetilde f(s)\ne0$ and $\theta(s)=0$ otherwise. Then let $W_2h=\theta h$ and note that $W_2\widetilde f=f$. Since $|\theta|\le  1$, $W_2$ is an admissible contraction from $(\widetilde{L^1_\mu},L^\oo_\mu)$ to itself. 

The map $W_1$ is constructed in the same way as the map $W_1$ in the proof of \cite{MS2}*{Theorem 2.3}. The argument is short so we repeat it here. On the one-dimensional subspace $\mathbb Rg$ of $\widetilde{L^1_\mu}+L^\oo_\mu$ the map $W_1(\alpha g)=\alpha\widetilde g$ is trivially linear and satisfies $W_1h\le\widetilde h$ for $h\in\mathbb Rg$. Theorem \ref{props} shows that $h\mapsto\widetilde h$ is sublinear so we may apply the Hahn-Banach-Kantorovich theorem, \cite{AB}*{Theorem 1.25}, to extend $W_1$ to be linear on $\widetilde{L^1_\mu}+L^\oo_\mu$ so that $W_1h\le\widetilde h$ remains valid. At $-h$ it gives $-W_1h\le\widetilde{-h}=\widetilde h$ so $|W_1h|\le\widetilde h$. Thus $\|W_1h\|_{L^\oo_\mu}\le \|\widetilde h\|_{L^\oo_\mu}=\|h\|_{L^\oo_\mu}$ and $\|W_1h\|_{\widetilde{L^1_\mu}}\le\|\widetilde h\|_{\widetilde{L^1_\mu}}=\|h\|_{\widetilde{L^1_\mu}}$, so $W_1$ is an admissible contraction from $(\widetilde{L^1_\mu},L^\oo_\mu)$ to itself that maps $g$ to $\widetilde g$. This completes the proof.
\end{proof}

To get the best result from the fact that $(L^1_\mu,L^\oo_\mu\!\!\downarrow)$ is a Calder\'on couple, we need to find the divisibility constant for the couple. 
\begin{lemma} The divisibility constant of $(L^1_\mu,L^\oo_\mu\!\!\downarrow)$ is $1$.
\end{lemma}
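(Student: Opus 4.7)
The plan is to transfer the divisibility problem to the half-line couple $(L^1_\lambda, L^\oo_\lambda\!\!\downarrow)$, exploit the divisibility constant $1$ recorded in Section \ref{old}, and push the resulting decomposition back to $U$ using the admissible contraction $W$ already built in the proof of Theorem \ref{CClev}.

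Fix $f \in L^1_\mu + L^\oo_\mu\!\!\downarrow$ with $K(f,t;L^1_\mu,L^\oo_\mu\!\!\downarrow) \le \sum_{j=1}^{\oo}\omega_j(t)$ for every $t \ge 0$. By \eqref{Kpos} the $K$-functional depends only on $|f|$, and since $|(\sgn f)\cdot h| \le |h|$, Theorem \ref{Loo}\ref{Loob} shows that multiplication by $\sgn f$ is an admissible contraction on $(L^1_\mu, L^\oo_\mu\!\!\downarrow)$. It therefore suffices to decompose $|f|$ and multiply through by $\sgn f$ at the end, so we may assume $f \ge 0$.

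Theorem \ref{Ko} then yields
\[
K(Rf, t; L^1_\lambda, L^\oo_\lambda\!\!\downarrow) = K(f, t; L^1_\mu, L^\oo_\mu\!\!\downarrow) \le \sum_{j=1}^{\oo}\omega_j(t),
\]
and since $(L^1_\lambda, L^\oo_\lambda\!\!\downarrow)$ has divisibility constant $1$ by Section \ref{old}, there exist $\varphi_j \in L^1_\lambda + L^\oo_\lambda\!\!\downarrow$ with $K(\varphi_j, t; L^1_\lambda, L^\oo_\lambda\!\!\downarrow) \le \omega_j(t)$ and $\sum_{j=1}^{\oo}\varphi_j = Rf$ in $L^1_\lambda + L^\oo_\lambda\!\!\downarrow$. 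Let $W$ be the admissible contraction from $(L^1_\lambda, L^\oo_\lambda\!\!\downarrow)$ to $(L^1_\mu, L^\oo_\mu\!\!\downarrow)$ constructed for this $f$ in the proof of Theorem \ref{CClev}; recall that $WRf = f$ $\mu$-a.e. Setting $f_j = W\varphi_j$, the boundedness of $W$ on $L^1_\lambda + L^\oo_\lambda\!\!\downarrow$ gives $\sum_{j=1}^{\oo} f_j = W(Rf) = f$ in $L^1_\mu + L^\oo_\mu\!\!\downarrow$, while \eqref{T} yields $K(f_j, t; L^1_\mu, L^\oo_\mu\!\!\downarrow) \le K(\varphi_j, t; L^1_\lambda, L^\oo_\lambda\!\!\downarrow) \le \omega_j(t)$, as required.

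The only genuine obstacle is the discrepancy between $f$ and $QRf$: a direct application of $Q$ to the half-line decomposition would produce a series summing to $QRf$ rather than to $f$. Substituting $W$ for $Q$ circumvents this, since $W$ was precisely engineered in the proof of Theorem \ref{CClev} so that $WRf = f$; no new construction is needed.
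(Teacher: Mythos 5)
Your argument is correct, and it shares the paper's overall transfer strategy --- push $f$ to the half line with $R$, invoke the divisibility constant $1$ for $(L^1_\lambda,L^\oo_\lambda\!\!\downarrow)$ from Section \ref{old}, and bring the pieces back to $U$ --- but the return trip is handled by a genuinely different mechanism. The paper defines the pieces explicitly as $f_j=(fQ\varphi_j)/QRf$; this forces it first to arrange $\varphi_j\ge0$ (replacing them by suitable minima with $Rf$), then to verify $\sum_j f_j=f$ pointwise, and then to establish $Rf_j\le RQ\varphi_j$ via the multiplicative identity of Proposition \ref{R}\ref{Rj} together with a cancellation of the factor $Rf$, before Theorem \ref{Ko} converts the half-line estimate into $K(f_j,t;L^1_\mu,L^\oo_\mu\!\!\downarrow)\le\omega_j(t)$. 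You instead reuse the operator $W\psi=fQ(w\psi)$ already shown in the proof of Theorem \ref{CClev} to be an admissible contraction from $(L^1_\lambda,L^\oo_\lambda\!\!\downarrow)$ to $(L^1_\mu,L^\oo_\mu\!\!\downarrow)$ satisfying $WRf=f$; then \eqref{T} gives the required $K$-functional bound for $f_j=W\varphi_j$ in one line, and norm convergence of $\sum_j\varphi_j$ to $Rf$ transfers to norm convergence of $\sum_j f_j$ to $f$ because an admissible contraction is in particular a contraction between the sum spaces (as $\|\cdot\|_{X_0+X_1}=K(\cdot,1;X_0,X_1)$). This buys a noticeably shorter proof with no nonnegativity reduction on the $\varphi_j$ and no multiplicativity or cancellation computation; the only cost is the explicit dependence on the construction inside the proof of Theorem \ref{CClev}, which is legitimate since that construction requires only $0\le f\in L^1_\mu+L^\oo_\mu\!\!\downarrow$. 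Your $f_j$ need not coincide with the paper's, but both satisfy the definition of divisibility, and your reduction to $f\ge0$ via multiplication by $\sgn f$ matches the paper's closing remark.
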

\begin{proof}  Suppose $0\le f\in L^1_\mu+L^\oo_\mu\!\!\downarrow$ and $\omega_j$ are nonnegative, concave functions on $(0,\oo)$ that satisfy $\sum_{j=1}^\oo\omega_j(1)<\oo$ and $K(f,t;L^1_\mu,L^\oo_\mu\!\!\downarrow)\le\sum_{j=1}^\oo\omega_j(t)$, for all $t>0$. By Proposition \ref{R}\ref{Rb},  Lemma \ref{AM}, and Theorem \ref{Ko} we have $0\le Rf\in L^1_\lambda+L^\oo_\lambda\!\!\downarrow$ and
\[
K(Rf,t;L^1_\lambda,L^\oo_\lambda\!\!\downarrow)=K(f,t;L^1_\mu,L^\oo_\mu\!\!\downarrow)\le\sum_{j=1}^\oo\omega_j(t).
\]
By \cite{MS1}*{Corollaries 3.9 and 4.7}, $(L^1_\lambda,L^\oo_\lambda\!\!\downarrow)$ has divisibility constant $1$. So, there exist functions $\varphi_j\in L^1_\lambda+L^\oo_\lambda\!\!\downarrow$ such that $K(\varphi_j,t;L^1_\lambda,L^\oo_\lambda\!\!\downarrow)\le\omega_j(t)$, for all $j$ and $t$, and $\sum_{j=1}^\oo\varphi_j$ converges to $Rf$ in $L^1_\lambda+L^\oo_\lambda\!\!\downarrow$.
Because $Rf$ is nonnegative, we  may assume that $\varphi_j\ge0$ for all $j$, since otherwise we can replace them with $\psi_j$, defined by $\psi_1=\min(|\varphi_1|,Rf)$ and $\psi_{n+1}=\min(|\varphi_{n+1}|,Rf-(\psi_1+\dots+\psi_n))$ for $n=1,2,\dots$.
By \eqref{QRflip} and \eqref{overM},  
\[
\int_MQRf\,d\mu=\int_Mf\,d\mu<\oo
\]
for all $M\in\mathcal M$. Since $QRf$ is $\sigma(\mathcal A)$-measurable, $\{u\in U:QRf(u)=0\}$ is $\sigma(\mathcal A)$-measurable and hence 
\[
0=\int_{\{u\in U:QRf(u)=0\}}QRf\,d\mu=\int_{\{u\in U:QRf(u)=0\}}f\,d\mu,
\]
whence $\{u\in U:QRf(u)=0\}$ is $\mu$-almost contained in $\{u\in U: f(u)= 0\}$. 

Set $f_j=(fQ\varphi_j)/QRf$ when $QRf$ is nonzero and $f_j=0$ otherwise. Then 
\begin{equation}\label{ptw}
\sum_{j=1}^\oo f_j=(fQRf)/QRf=f
\end{equation}
$\mu$-a.e. In particular, $0\le f_j\le f$ $\mu$-a.e for all $j$. Now fix $j$. 
The definition of $f_j$ implies that $fQ\varphi_j=(QRf)f_j$ $\mu$-a.e. and we get $QRf$, $Q\varphi_j\in L^+(\mathcal A)$ from Proposition \ref{Q}\ref{Qb}, so we may use Proposition \ref{R}\ref{Rj} and Theorem \ref{QR}\ref{QRa} to get 
\[
(Rf)RQ\varphi_j=R(fQ\varphi_j)=R((QRf)f_j)=(RQRf)Rf_j=(Rf)Rf_j.
\]
Since $f-f_j\ge0$, $Rf-Rf_j\ge0$ so $Rf_j=0$ whenever $Rf=0$. Therefore, we may cancel $Rf$ above to get $RQ\varphi_j\ge Rf_j$. This inequality, Lemma \ref{AM} and \eqref{T}, give
\[
K(Rf_j,t;L^1_\lambda,L^\oo_\lambda\!\!\downarrow)
\le K(RQ\varphi_j,t;L^1_\lambda,L^\oo_\lambda\!\!\downarrow)\le K(\varphi_j,t;L^1_\lambda,L^\oo_\lambda\!\!\downarrow)\le\omega_j(t).
\]
By Theorem \ref{Ko}, this gives $K(f_j,t;L^1_\mu,L^\oo_\mu\!\!\downarrow)\le\omega_j(t)$.
This estimate, along with \eqref{ptw} and the completeness of  $L^1_\mu+L^\oo_\mu\!\!\downarrow$, shows that for each $n$,
\[
\Big\|f-\sum_{j=1}^{n-1} f_j\Big\|_{L^1_\mu+L^\oo_\mu\!\downarrow}
=\Big\|\sum_{j=n}^\oo f_j\Big\|_{L^1_\mu+L^\oo_\mu\!\downarrow}
\le\sum_{j=n}^\oo \|f_j\|_{L^1_\mu+L^\oo_\mu\!\downarrow}
\le\sum_{j=n}^\oo\omega_j(1),
\]
where we have used the identity 
$\|f_j\|_{L^1_\mu+L^\oo_\mu\!\downarrow}=K(f_j,1;L^1_\mu,L^\oo_\mu\!\!\downarrow)$.  The right-hand side above is the tail of a convergent series, so it goes to zero as $n\to\oo$. We conclude that  $\sum_{j=1}^\oo f_j$ converges to $f$ in $L^1_\mu+L^\oo_\mu\!\!\downarrow$. 

To drop the nonnegativity assumption on $f$, construct the $f_j$ for $|f|$. It is easy to verify that the functions $\sgn(f)f_j$ give the required decomposition of $f$.
\end{proof}

We also need the divisibility constant for $(\widetilde{L^1_\mu},L^\oo_\mu)$. In \cite{MS2}*{Lemma 3.2 and Theorem 4.3} this is established for the usual order on the half line with Lebesgue measure and the tools are provided to extend it to $\sigma$-finite measures. We follow the method here.
\begin{lemma} The divisibility constant of $(\widetilde{L^1_\mu},L^\oo_\mu)$ is $1$.
\end{lemma}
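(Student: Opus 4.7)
The plan mirrors the preceding lemma: transfer the divisibility problem to the half-line via $R$, invoke divisibility constant one for $(\widetilde{L^1_\lambda},L^\oo_\lambda)$ from Section~\ref{old}, and pull the decomposition back through $Q$ with a suitable multiplicative correction. Suppose $g\in\widetilde{L^1_\mu}+L^\oo_\mu$ and nonnegative concave $\omega_j$ on $[0,\oo)$ satisfy $\sum_{j=1}^\oo\omega_j(1)<\oo$ and $K(g,t;\widetilde{L^1_\mu},L^\oo_\mu)\le\sum_{j=1}^\oo\omega_j(t)$. As in the previous lemma, sign-flipping (multiplication by $\sgn g$ is an admissible contraction on $(\widetilde{L^1_\mu},L^\oo_\mu)$ because $|\eta h|\le|h|$ forces $\widetilde{\eta h}\le\widetilde h$ whenever $|\eta|\le 1$) reduces to the case $g\ge 0$. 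Since $\widetilde g\in L^\downarrow(\mathcal A)$, Theorem~\ref{QR}\ref{QRe} gives $R\widetilde g\in L^\downarrow(\mathcal B)$ with $(R\widetilde g)^*=(\widetilde g)^*$, so Theorem~\ref{Kt} together with the hypothesis yields
\[
K(R\widetilde g,t;\widetilde{L^1_\lambda},L^\oo_\lambda)=\int_0^t(\widetilde g)^*=K(g,t;\widetilde{L^1_\mu},L^\oo_\mu)\le\sum_{j=1}^\oo\omega_j(t).
\]
Divisibility in the half-line couple then produces $\psi_j\in\widetilde{L^1_\lambda}+L^\oo_\lambda$ with $K(\psi_j,t;\widetilde{L^1_\lambda},L^\oo_\lambda)\le\omega_j(t)$ and $\sum_j\psi_j=R\widetilde g$ in the sum space; the nonnegativization trick used in the preceding lemma lets me assume $\psi_j\ge 0$ with $\sum_{j=1}^n\psi_j\uparrow R\widetilde g$ pointwise $\lambda$-a.e.

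Define
\[
g_j=\frac{g\,Q\psi_j}{\widetilde g},\qquad\text{with the convention }0/0=0.
\]
By Proposition~\ref{Q}\ref{Qc} and Theorem~\ref{QR}\ref{QRb}, $\sum_j Q\psi_j=QR\widetilde g=\widetilde g$ $\mu$-a.e., whence $\sum_j g_j=g$ $\mu$-a.e. The key estimate uses $0\le g\le\widetilde g$ to get $0\le g_j\le Q\psi_j\le Q\widetilde{\psi_j}$; since $\widetilde{\psi_j}\in L^\downarrow(\mathcal B)$, Theorem~\ref{QR}\ref{QRd} shows that $Q\widetilde{\psi_j}\in L^\downarrow(\mathcal A)$ is a core decreasing majorant of $g_j$, so $\widetilde{g_j}\le Q\widetilde{\psi_j}$. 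Therefore $(\widetilde{g_j})^*\le(Q\widetilde{\psi_j})^*=(\widetilde{\psi_j})^*$, again by Theorem~\ref{QR}\ref{QRd}, and Theorem~\ref{Kt} yields
\[
K(g_j,t;\widetilde{L^1_\mu},L^\oo_\mu)=\int_0^t(\widetilde{g_j})^*\le\int_0^t(\widetilde{\psi_j})^*=K(\psi_j,t;\widetilde{L^1_\lambda},L^\oo_\lambda)\le\omega_j(t).
\]
In particular $\|g_j\|_{\widetilde{L^1_\mu}+L^\oo_\mu}=K(g_j,1;\widetilde{L^1_\mu},L^\oo_\mu)\le\omega_j(1)$, so $\sum_j g_j$ converges absolutely in $\widetilde{L^1_\mu}+L^\oo_\mu$, necessarily to its pointwise limit $g$.

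The hard part is identifying the right formula for $g_j$: the multiplicative correction $g/\widetilde g$ is essential so that the decomposition sums to $g$ rather than to $\widetilde g$, and the lucky fact that makes the $K$-functional estimate close is that this correction still preserves the domination $g_j\le Q\widetilde{\psi_j}$, allowing $\widetilde{g_j}$ to inherit the smallness of $\widetilde{\psi_j}$. A secondary technical point is arranging $\sum_j\psi_j=R\widetilde g$ pointwise $\lambda$-a.e.\ with $\psi_j\ge0$; this is handled exactly as in the preceding lemma, using that monotone, norm-convergent sequences in a Banach function space converge pointwise a.e.\ to the same limit.
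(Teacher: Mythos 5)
Your argument is correct in outline and in nearly all details, but it diverges from the paper at precisely the point the paper treats most carefully, and that divergence deserves a warning. You invoke, as a black box, the assertion that $(\widetilde{L^1_\lambda},L^\oo_\lambda)$ has divisibility constant $1$ for the tailored measure $\lambda$ of Section~\ref{tailor}; granting that, the rest checks out: the reduction to $g\ge0$, the nonnegativization of the $\psi_j$ with pointwise convergence of the partial sums to $R\widetilde g$, the identity $\sum_jQ\psi_j=QR\widetilde g=\widetilde g$ via Proposition~\ref{Q}\ref{Qc} and Theorem~\ref{QR}\ref{QRb}, the chain $\widetilde{g_j}\le Q\widetilde{\psi_j}$ and $(\widetilde{g_j})^*\le(\widetilde{\psi_j})^*$ via Theorem~\ref{QR}\ref{QRd}, and the summability argument are all sound, and your formula $g_j=(g/\widetilde g)Q\psi_j$ is essentially the paper's. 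The paper, however, deliberately does \emph{not} cite divisibility for $(\widetilde{L^1_\lambda},L^\oo_\lambda)$: its proof opens by observing that \cite{MS2} establishes divisibility only for Lebesgue measure on the half line, so it transfers the problem to the couple $(L^1,L^\oo)$ with Lebesgue measure by composing with the function $b$ of Section~\ref{tailor} (using the identity $\varphi^*=(\varphi\circ b)^\#$ for nonincreasing $\varphi$), obtains \emph{nonincreasing} pieces $\kappa_j$ by appealing to \cite{BS86}, verifies $\kappa_j=\kappa_j\circ b$, and sets $g_j=(g/\widetilde g)Q\kappa_j$ --- so the pieces are already core decreasing and no least decreasing majorant of the pieces is required. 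The caution for you is that the divisibility of $(\widetilde{L^1_\lambda},L^\oo_\lambda)$ for a general (non-Lebesgue) Borel measure $\lambda$ is exactly the instance $U=[0,\oo)$, $\mathcal A=\mathcal B$ of the lemma you are proving; although Section~\ref{old} lists it among the known results, the paper's own proof signals that the citable statement covers only Lebesgue measure, so leaning on it directly risks circularity. Either supply the reduction to Lebesgue measure via $b$ and \cite{BS86} as the paper does, or verify that the cited results of \cite{MS2} genuinely cover arbitrary $\sigma$-finite Borel measures; with that done, your route and the paper's coincide in all essentials.
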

\begin{proof} In this paper so far, a Borel function $\varphi$ on $[0,\oo)$ is viewed as $\lambda$-measurable and $\varphi^*$ denotes the rearrangement with respect to $\lambda$. In this proof we will also need to view such functions as Lebesgue measurable. To avoid confusion, we use $\varphi^\#$ to denote the rearrangement of $\varphi$ with respect to Lebesgue measure on $[0,\oo)$.

Recall the functions $a(x)$ and $b(x)$ introduced in Section \ref{tailor}.

If $x_1\ge\dots\ge x_K\ge x_{K+1}=0$ and $\alpha_1,\dots,\alpha_K$ are positive real numbers, then, for each $j$, $\varphi=\sum_{k=1}^K\alpha_k\chi_{[0,x_k]}$ takes the value $\sum_{k=1}^j\alpha_k$ on $(x_{j+1},x_j]$, a set of $\lambda$-measure $a(x_j)-a(x_j+1)$ according to Lemma \ref{lambda}. In the proof of that lemma, we find that $b(z)\le x$ if and only if $z\le a(x)$. Therefore, $\varphi\circ b$ takes that same value on $(a(x_{j+1}),a(x_j)]$, a set of Lebesgue measure $a(x_j)-a(x_j+1)$. These are the only nonzero values the two functions take, so $\varphi^*=(\varphi\circ b)^\#$. Taking limits of increasing sequences of such functions, and applying \cite{BS}*{Proposition 2.1.7}, we see that this formula remains valid when $\varphi$ is any nonnegative, nonincreasing function on $[0,\oo)$.

Suppose $0\le g\in \widetilde{L^1_\mu}+L^\oo_\mu$ and $\omega_j$ are nonnegative, concave functions on $[0,\oo)$ such that $\sum_{j=1}^\oo\omega_j(1)<\oo$ and $K(g,t;\widetilde{L^1_\mu},L^\oo_\mu)\le\sum_{j=1}^\oo\omega_j(t)$, for all $t$. Our object is to find $g_j\in \widetilde{L^1_\mu}+L^\oo_\mu$ such that $K(g_j,t;\widetilde{L^1_\mu},L^\oo_\mu))\le\omega_j(t)$, for all $j$ and $t$, and $\sum_{j=1}^\oo g_j$ converges to $g$ in $\widetilde{L^1_\mu}+L^\oo_\mu$.

Since $\widetilde g\in L^\downarrow(\mathcal A)$, $R\widetilde g\in L^\downarrow(\mathcal B)$. 
Since $b$ is nondecreasing, $(R\widetilde g)\circ b$ is nonnegative and nonincreasing. For each $t\ge0$, we apply the above rearrangement formula, Theorem \ref{QR}\ref{QRe} and Theorem \ref{Kt} to get
\[
\int_0^t((R\widetilde g)\circ b)^\#=\int_0^t(R\widetilde g)^*=\int_0^t(\widetilde g)^*=K(g,t;\widetilde{L^1_\mu},L^\oo_\mu)\le\sum_{j=1}^\oo\omega_j(t).
\]
Since the couple $(L^1,L^\oo)$ has divisibility constant $1$, there exist nonnegative Lebesgue measurable functions $\kappa_j\in L^1+L^\oo$ such that $\int_0^t\kappa_j^\#\le\omega_j(t)$, for all $j$ and $t$, and $\sum_{j=1}^\oo\kappa_j$ converges to $(R\widetilde g)\circ b$ in $L^1+L^\oo$. Using \cite{BS86}*{Theorem 5b} in the proof of divisibility shows that we may take $\kappa_j$ to be nonincreasing for each $j$.

By Lemma \ref{lambda}, $x\le b(x)$  and $b(b(x))=b(x)$ for all $x\ge0$. Thus
\[
0\le\sum_{j=1}^\oo \kappa_j(x)-\kappa_j(b(x))=(R\widetilde g)(b(x))-(R\widetilde g)(b(b(x)))=0
\]
and so $\kappa_j=\kappa_j\circ b$ for all $j$.

Set $g_j=(g/\widetilde g)Q\kappa_j$. Since $\kappa_j\in\ L^\downarrow(\mathcal B)$, Theorem \ref{QR}\ref{QRd} shows that $Q\kappa_j\in L^\downarrow(\mathcal A)$ and $(Q\kappa_j)^*=\kappa_j^*$. But $g\le\widetilde g$ so $g_j\le Q\kappa_j$ and we get $\widetilde{g_j}\le Q\kappa_j$ and $\big(\widetilde{g_j}\big)^*\le \kappa_j^*$. Thus, for all $j$ and $t$, Theorem \ref{Kt} shows
\[
K(g_j,t;\widetilde{L^1_\mu},L^\oo_\mu)=\int_0^t(\widetilde{g_j})^*\le\int_0^t\kappa_j^*=\int_0^t(\kappa_j\circ b)^*
=\int_0^t\kappa_j^\#\le\omega_j(t).
\]

By Lemma \ref{lambda}, $x=b(x)$ $\lambda$-a.e so  $(R\widetilde g)\circ b=R\widetilde g$ $\lambda$-a.e, Theorem \ref{QR}\ref{QRb} gives $Q((R\widetilde g)\circ b)=QR\widetilde g=\widetilde g$ $\mu$-a.e. Thus, by the Monotone Convergence Theorem,
\[
\sum_{j=1}^\oo g_j=(g/\widetilde g)Q\Big(\sum_{j=1}^\oo\kappa_j\Big)
=(g/\widetilde g)Q((R\widetilde g)\circ b)=g
\]
$\mu$-a.e. This, and the completeness of  $\widetilde{L^1_\mu}+L^\oo_\mu$, shows that for each $n$,
\[
\Big\|g-\sum_{j=1}^{n-1} g_j\Big\|_{\widetilde{L^1_\mu}+L^\oo_\mu}
=\Big\|\sum_{j=n}^\oo g_j\Big\|_{\widetilde{L^1_\mu}+L^\oo_\mu}
\le\sum_{j=n}^\oo \|g_j\|_{\widetilde{L^1_\mu}+L^\oo_\mu}
\le\sum_{j=n}^\oo\omega_j(1),
\]
where we have used the identity
$\|g_j\|_{\widetilde{L^1_\mu}+L^\oo_\mu}=K(g_j,1;\widetilde{L^1_\mu},L^\oo_\mu)$. The right-hand side above is the tail of a convergent series so it goes to zero as $n\to\oo$. We conclude that  $\sum_{j=1}^\oo g_j$ converges to $g$ in $\widetilde{L^1_\mu}+L^\oo_\mu$. 

To drop the nonnegativity assumption on $g$, construct the $g_j$ for $|g|$. It is easy to verify that the functions $\sgn(g)g_j$ give the required decomposition of $g$.
\end{proof}

We end by summarizing the interpolation properties of the couples $(L^1_\mu,L^\oo_\mu\!\!\downarrow)$ and $(\widetilde{L^1_\mu},L^\oo_\mu)$ as they relate to the universally rearrangement invariant spaces. See Section \ref{oldCC} for terminology.

\begin{theorem}\label{summ} Let $X$, $Y$, and $Z$ be Banach function spaces of $\mu$-measurable functions. Then
\begin{enumerate}[label=(\alph*)]
\item\label{summa} $X\in\Int_1(L^1_\mu, L^\oo_\mu)$ if and only if $X$ is u.r.i.;
\item\label{summb}  $Y\in \Int_1(L^1_\mu,L^\oo_\mu\!\!\downarrow)$ if and only if $Y=X^o$, with identical norms, for some u.r.i. space $X$;
\item\label{summc}  $Z\in \Int_1(\widetilde{L^1_\mu},L^\oo_\mu)$ if and only if $Z=\widetilde X$, with identical norms, for some u.r.i. space $X$.
\item\label{summd} If $X$ and $X'$ contain all characteristic functions of sets of finite measure, then $(X\!\!\downarrow)'=\widetilde{X'}$ with identical norms, $X\!\!\downarrow=(\widetilde{X'})'$ with identical norms, $\widetilde X\subseteq X\subseteq X''\subseteq X\!\!\downarrow$ and $(X'')^o\subseteq X\!\!\downarrow$;
\item\label{summe} If $X$ is u.r.i., then $X\!\!\downarrow=(X'')^o$ with identical norms, and $X\subseteq X^o$;
\item\label{summf} If $X$ is u.r.i.\ and has the Fatou property, then $X\!\!\downarrow=X^o$ with identical norms.
\end{enumerate}
\end{theorem}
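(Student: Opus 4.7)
The plan is to assemble the six statements from machinery already in place. Part (a) is Calder\'on's theorem, recorded in Section \ref{oldCC}. The six inclusions in (d) are mostly immediate: the two duality identities $(X\!\!\downarrow)'=\widetilde{X'}$ and $X\!\!\downarrow=(\widetilde{X'})'$ are in Theorem \ref{props}; $X\subseteq X''$ holds for every saturated Banach function space; $\widetilde X\subseteq X$ follows from $|g|\le\widetilde g$ $\mu$-a.e.\ and the lattice property; and $X''\subseteq X\!\!\downarrow$ and $(X'')^o\subseteq X\!\!\downarrow$ are exactly the two norm estimates $\|f\|_{X\!\downarrow}\le\|f\|_{X''}$ and $\|f\|_{X\!\downarrow}\le\|f^o\|_{X''}$ recorded in Theorem \ref{props}.

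For parts (b) and (c), the plan is to invoke the fact that $(L^1_\mu,L^\oo_\mu\!\!\downarrow)$ and $(\widetilde{L^1_\mu},L^\oo_\mu)$ are exact Calder\'on couples with divisibility constant $1$ (Theorems \ref{CClev} and \ref{CClcdm} together with the two preceding divisibility lemmas), so by the structural result of \cite{BK} quoted in Section \ref{oldCC}, every $Y\in\Int_1(L^1_\mu,L^\oo_\mu\!\!\downarrow)$ has the form $(L^1_\mu,L^\oo_\mu\!\!\downarrow)_\Phi$, with identical norms, for some parameter $\Phi$ of the $K$-method. Setting $X=(L^1_\mu,L^\oo_\mu)_\Phi$, which is u.r.i.\ by (a), and applying Theorem \ref{Ko} yields
\[
\|f\|_Y=\big\|K(f,\cdot;L^1_\mu,L^\oo_\mu\!\!\downarrow)\big\|_\Phi
=\bigg\|\int_0^{(\cdot)}(f^o)^*\bigg\|_\Phi
=\big\|K(f^o,\cdot;L^1_\mu,L^\oo_\mu)\big\|_\Phi=\|f^o\|_X,
\]
giving $Y=X^o$ with identical norms; reversing the argument shows that $X^o\in\Int_1(L^1_\mu,L^\oo_\mu\!\!\downarrow)$ for every u.r.i.\ $X$. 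Part (c) is strictly analogous, with Theorem \ref{Kt} replacing Theorem \ref{Ko}, producing the identity $\|g\|_Z=\|\widetilde g\|_X$ and hence $Z=\widetilde X$ with identical norms.

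For (e), the equality $X\!\!\downarrow=(X'')^o$ is Theorem \ref{downlev}. To get $X\subseteq X^o$, the plan is to exhibit, for each nonnegative $f$, an admissible contraction of $(L^1_\mu,L^\oo_\mu)$ that sends $f$ to $f^o$. Lemma \ref{level} builds $f^o=Q((Rf)^o)$, and the identity $(Rf)^o=J_{Rf}(Rf)$ from \cite{Stm}*{Proposition 1.5} (where $J_{Rf}$ is a conditional-averaging operator, hence an admissible contraction on $(L^1_\lambda,L^\oo_\lambda)$) combines with the mapping properties of $Q$ and $R$ on the unmarked couples (Propositions \ref{R}\ref{Rg}\ref{Rh} and \ref{Q}\ref{Qg}\ref{Qh}) to show that $QJ_{Rf}R$ is an admissible contraction on $(L^1_\mu,L^\oo_\mu)$. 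Since a u.r.i.\ $X$ is exact interpolation for that couple, we get $\|f^o\|_X=\|QJ_{Rf}Rf\|_X\le\|f\|_X$, so $f\in X^o$. Part (f) is then immediate: the Fatou property forces $X''=X$ with identical norms, so (e) collapses to $X\!\!\downarrow=X^o$.

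The main obstacle to monitor is the precise norm bookkeeping in (b) and (c): the displayed chain of equalities must preserve \emph{identical} norms throughout, which is exactly what the divisibility-constant-$1$ refinement of the $K$-method representation guarantees; without it one would only obtain equivalence. A secondary subtlety, in (e), is that the averaging operator $J_{Rf}$ depends on $f$, but this is harmless since for each fixed $f$ one only needs the existence of \emph{some} admissible contraction sending $f$ to $f^o$.
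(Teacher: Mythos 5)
Your proposal matches the paper's proof in almost every respect: (a) via Calder\'on's theorem; (d) from Theorem \ref{props} together with the standard facts $X\subseteq X''$ and $|g|\le\widetilde g$; (b) and (c) from the exact-Calder\'on-couple and divisibility-constant-$1$ results plus the Brudny\u{\i}--Krugljak representation and the $K$-functional identities of Theorems \ref{Ko} and \ref{Kt} (the paper packages this as a correspondence between triples $(X,X^o,\widetilde X)$ and parameters $\Phi$, but the norm computation is exactly the chain you display, and your "reversal" for the converse is what the paper's triple bookkeeping accomplishes); and (f) from $X''=X$. The one place you genuinely diverge is the inclusion $X\subseteq X^o$ in (e).

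There your plan is to send $f$ to $f^o$ by the explicit admissible contraction $QJ_{Rf}R$ on $(L^1_\mu,L^\oo_\mu)$. This is sound when $f$ is nonnegative, bounded and supported on a core set, which is the only setting in which the paper establishes the representation $(Rf)^o=J_{Rf}(Rf)$ and the admissibility of $J_{Rf}$ (in the proof of Theorem \ref{downlev}, quoting \cite{Stm}*{Proposition 1.5}). For a general $f\in X$ you would need either to check that the level-interval representation and the formal self-adjointness of $J_{Rf}$ survive when a level interval is unbounded (where the average $\lambda(I)^{-1}\int_I$ degenerates), or to pass to the limit from truncations $f_n\uparrow|f|$ --- and the latter is blocked because (e) does not assume $X$ has the Fatou property, so $\sup_n\|f_n^o\|_X\le\|f\|_X$ does not by itself give $f^o\in X$. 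The paper sidesteps all of this: from $\|h\|_{L^\oo_\mu\!\downarrow}\le\|h\|_{L^\oo_\mu}$ one gets $K(f,t;L^1_\mu,L^\oo_\mu\!\!\downarrow)\le K(f,t;L^1_\mu,L^\oo_\mu)$, which by Theorem \ref{Ko} reads $\int_0^t(f^o)^*\le\int_0^tf^*$, and the defining property of a u.r.i.\ space then yields $f^o\in X$ with $\|f^o\|_X\le\|f\|_X$ directly. I recommend replacing your contraction argument with this two-line $K$-functional comparison (or, if you insist on a contraction, producing it abstractly from the majorization via Calder\'on's theorem --- though at that point the u.r.i.\ definition already finishes the job).
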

\begin{proof} The following diagram may help to clarify the relationships described above. The proof continues below.

\begin{tikzcd}[row sep=small]
&&&&&
\\
&&&&&\!\!L^\oo_\mu\!\!\downarrow\\
&&&&X^o\arrow[dddd,dashed,no head]&\\
&&&&&\\
&&&\hbox to 0pt{\hss$\Int_1(L^1_\mu,L^\oo_\mu\!\!\downarrow)\qquad$\hss}&&\\
&&\ X\!\!\downarrow\arrow[d,dashed,no head]&&&\\
&L^1_\mu\!\!\arrow[uuuuurrrr,bend left=19,no head]\arrow[uuuuurrrr,bend right=10,no head]\arrow[rrrr,bend left=16,no head]\arrow[rrrr,bend right=16,no head]
&X\lower2.2ex\hbox to 0 pt{\hss Fatou\hss}\arrow[dddd,dashed,no head]
& \raise3ex\hbox to 0pt{\hss u.r.i. spaces\hss}\hbox to 0 pt{\hss$\Int_1(L^1_\mu,L^\oo_\mu)$\hss}
&X\arrow[d,dashed,no head]
&\!\!L^\oo_\mu\\
&&&&\widetilde X\arrow[ddddd,dashed,no head]&\\
&&&\qquad\Int_1(\widetilde{L^1_\mu},L^\oo_\mu)
&&\\&&&&\\
&&\ \quad X'\!\!\downarrow'\arrow[dd,dashed,no head]&&&\\
&\widetilde{L^1_\mu}\!\!\arrow[uuuuurrrr,bend left=10,no head]\arrow[uuuuurrrr,bend right=19,no head]&&&&\\
&&\Phi&\hbox to 0pt{\hss parameters of the $K$-method\ \ \ \hss}&\Phi&
\end{tikzcd}
\vskip 3ex
Consider the collection of all triples 
\[
\big((L^1_\mu,L^\oo_\mu)_\Phi, (L^1_\mu,L^\oo_\mu\!\!\downarrow)_\Phi,(\widetilde{L^1_\mu},L^\oo_\mu)_\Phi\big) 
\]
as $\Phi$ runs through all parameters of the $K$-method. The first entry of each triple is in $\Int_1(L^1_\mu, L^\oo_\mu)$, the second entry is in $\Int_1(L^1_\mu,L^\oo_\mu\!\!\downarrow)$ and the third entry is in $\Int_1(\widetilde{L^1_\mu},L^\oo_\mu)$. Part \ref{summa}, above, was proved in \cite{Ca}. It shows that the collection of first entries in these triples is exactly the set of u.r.i. spaces over $U$. Theorems \ref{Ko} and \ref{Kt} show that
\[
K(f,t;L^1_\mu,L^\oo_\mu\!\!\downarrow)=K(f^o,t;L^1_\mu,L^\oo_\mu)\quad\text{and}\quad
K(g,t;\widetilde{L^1_\mu},L^\oo_\mu)=K(\widetilde g,t;L^1_\mu,L^\oo_\mu).
\]
Therefore, every triple is of the form $(X,X^o,\widetilde X)$ for some u.r.i. space $X$. Conversely, every $(X,X^o,\widetilde X)$, where $X$ is u.r.i., is in the collection. Since both $(L^1_\mu,L^\oo_\mu\!\!\downarrow)$ and $(\widetilde{L^1_\mu},L^\oo_\mu)$ are exact Calder\'on couples with divisibility constant $1$, \cite{BK}*{Theorems 3.3.1 and 4.4.5 and Remark 4.4.4} show that every $Y\in \Int_1(L^1_\mu,L^\oo_\mu\!\!\downarrow)$ is the second entry in some triple, and every  $Z\in \Int_1(\widetilde{L^1_\mu},L^\oo_\mu)$ is the third entry in some triple. This proves \ref{summb} and \ref{summc}.

The results of \ref{summd} follow from Theorem \ref{props} and two of the remaining three statements appear in Theorem \ref{downlev}. We show that if $X$ is u.r.i., then $X\subseteq X^o$: Since $\|f\|_{L^\oo_\mu\!\downarrow}\le\|f\|_{L^\oo_\mu}$ for all $f$, $K(f,t;L^1_\mu,L^\oo_\mu\!\!\downarrow)\le K(f,t;L^1_\mu,L^\oo_\mu)$ for all $f$ and $t$. That is, $\int_0^t (f^o)^*\le\int_0^t f^*$ for all $f$ and $t$. Since $X$ is u.r.i.\ we get $\|f\|_{X^o}\le\|f\|_X$ for all $f$. This shows $X\subseteq X^o$.
\end{proof}

\begin{bibdiv}
\begin{biblist}
\bib{AB}{book}{
   author={Aliprantis, Charalambos D.},
   author={Burkinshaw, Owen},
   title={Positive operators},
   note={Reprint of the 1985 original},
   publisher={Springer, Dordrecht},
   date={2006},
   pages={xx+376},
   isbn={978-1-4020-5007-7},
   isbn={1-4020-5007-0},
%   review={\MR{2262133}},
   doi={10.1007/978-1-4020-5008-4},
}
\bib{BS}{book}{
   author={Bennett, C.},
   author={Sharpley, R.},
   title={Interpolation of operators},
   series={Pure and Applied Mathematics},
   volume={129},
   publisher={Academic Press, Inc., Boston, MA},
   date={1988},
   pages={xiv+469},
   isbn={0-12-088730-4},
%   review={\MR{928802}},
}
\bib{BS86}{article}{
   author={Bennett, Colin},
   author={Sharpley, Robert},
   title={$K$-divisibility and a theorem of Lorentz and Shimogaki},
   journal={Proc. Amer. Math. Soc.},
   volume={96},
   date={1986},
   number={4},
   pages={585--592},
   issn={0002-9939},
%   review={\MR{0826485}},
   doi={10.2307/2046308},
}
\bib{BK}{book}{
   author={Brudny\u{\i}, Yu. A.},
   author={Krugljak, N. Ya.},
   title={Interpolation functors and interpolation spaces. Vol. I},
   series={North-Holland Mathematical Library},
   volume={47},
   note={Translated from the Russian by Natalie Wadhwa;
   With a preface by Jaak Peetre},
   publisher={North-Holland Publishing Co., Amsterdam},
   date={1991},
   pages={xvi+718},
   isbn={0-444-88001-1},
%   review={\MR{1107298}},
}
\bib{Ca}{article}{
   author={Calder\'{o}n, A.-P.},
   title={Spaces between $L\sp{1}$ and $L\sp{\infty }$ and the theorem of
   Marcinkiewicz},
   journal={Studia Math.},
   volume={26},
   date={1966},
   pages={273--299},
   issn={0039-3223},
%   review={\MR{0203444}},
   doi={10.4064/sm-26-3-301-304},
}
\bib{FLM}{article}{
   author={Foralewski, Pawe\l },
   author={Le\'{s}nik, Karol},
   author={Maligranda, Lech},
   title={Some remarks on the level functions and their applications},
   journal={Comment. Math.},
   volume={56},
   date={2016},
   number={1},
   pages={55--86},
   issn={2080-1211},
%   review={\MR{3563842}},
   doi={10.14708/cm.v56i1.1142},
}
\bib{Ha}{book}{
   author={Halmos, Paul R.},
   title={Measure Theory},
   publisher={D. Van Nostrand Co., Inc., New York, N. Y.},
   date={1950},
   pages={xi+304},
%   review={\MR{0033869}},
}
\bib{H}{article}{
   author={Halperin, Israel},
   title={Function spaces},
   journal={Canad. J. Math.},
   volume={5},
   date={1953},
   pages={273--288},
   issn={0008-414X},
%   review={\MR{56195}},
   doi={10.4153/cjm-1953-031-3},
}
\bib{HKM}{article}{
   author={Hudzik, H.},
   author={Kami\'{n}ska, A.},
   author={Masty\l o, M.},
   title={On the dual of Orlicz-Lorentz space},
   journal={Proc. Amer. Math. Soc.},
   volume={130},
   date={2002},
   number={6},
   pages={1645--1654},
   issn={0002-9939},
%   review={\MR{1887011}},
   doi={10.1090/S0002-9939-02-05997-X},
}
\bib{KR}{article}{
   author={Kami\'{n}ska, Anna},
   author={Raynaud, Yves},
   title={Abstract Lorentz spaces and K\"{o}the duality},
   journal={Indag. Math. (N.S.)},
   volume={30},
   date={2019},
   number={4},
   pages={553--595},
   issn={0019-3577},
%   review={\MR{3961790}},
   doi={10.1016/j.indag.2019.02.002},
}
\bib{Le}{article}{
   author={Le\'{s}nik, Karol},
   title={Monotone substochastic operators and a new Calder\'{o}n couple},
   journal={Studia Math.},
   volume={227},
   date={2015},
   number={1},
   pages={21--39},
   issn={0039-3223},
%   review={\MR{3359955}},
   doi={10.4064/sm227-1-2},
}
\bib{LM}{article}{
   author={Le\'{s}nik, Karol},
   author={Maligranda, Lech},
   title={Interpolation of abstract Ces\`aro, Copson and Tandori spaces},
   journal={Indag. Math. (N.S.)},
   volume={27},
   date={2016},
   number={3},
   pages={764--785},
   issn={0019-3577},
%   review={\MR{3505993}},
   doi={10.1016/j.indag.2016.01.009},
}
\bib{L}{book}{
   author={Lorentz, G. G.},
   title={Bernstein polynomials},
   edition={2},
   publisher={Chelsea Publishing Co., New York},
   date={1986},
   pages={x+134},
   isbn={0-8284-0323-6},
%   review={\MR{864976}},
}
\bib{MS1}{article}{
   author={Masty\l o, Mieczys\l aw},
   author={Sinnamon, Gord},
   title={A Calder\'{o}n couple of down spaces},
   journal={J. Funct. Anal.},
   volume={240},
   date={2006},
   number={1},
   pages={192--225},
   issn={0022-1236},
%   review={\MR{2259895}},
   doi={10.1016/j.jfa.2006.05.007},
}
\bib{MS2}{article}{
   author={Masty\l o, Mieczys\l aw},
   author={Sinnamon, Gord},
   title={Calder\'{o}n-Mityagin couples of Banach spaces related to decreasing
   functions},
   journal={J. Funct. Anal.},
   volume={272},
   date={2017},
   number={11},
   pages={4460--4482},
   issn={0022-1236},
%   review={\MR{3630631}},
   doi={10.1016/j.jfa.2017.01.016},
}
\bib{RS}{article}{
   author={Rastegari, Javad},
   author={Sinnamon, Gord},
   title={Fourier series in weighted Lorentz spaces},
   journal={J. Fourier Anal. Appl.},
   volume={22},
   date={2016},
   number={5},
   pages={1192--1223},
   issn={1069-5869},
%   review={\MR{3547718}},
   doi={10.1007/s00041-015-9455-5},
}
\bib{RF}{book}{
   author={Royden, H. L.},
   author={Fitzpatrick, P. M.},
   title={Real Analysis, Fourth Edition},
   publisher={Pearson Education, Boston, MA },
   date={2010},
   pages={xii+505},
   isbn={0-13-143747-X},
%   review=n/a,
}
\bib{Sil}{article}{
   author={Sinnamon, Gord},
   title={Interpolation of spaces defined by the level function},
   conference={
      title={Harmonic analysis},
      address={Sendai},
      date={1990},
   },
   book={
      series={ICM-90 Satell. Conf. Proc.},
      publisher={Springer, Tokyo},
   },
   isbn={4-431-70084-6},
   date={1991},
   pages={190--193},
   review={\MR{1261440}},
}
\bib{Slr}{article}{
   author={Sinnamon, Gord},
   title={The level function in rearrangement invariant spaces},
   journal={Publ. Mat.},
   volume={45},
   date={2001},
   number={1},
   pages={175--198},
   issn={0214-1493},
 %  review={\MR{1829583}},
 %  doi={10.5565/PUBLMAT\{_}45101\{_}08},
}
\bib{Stm}{article}{
   author={Sinnamon, Gord},
   title={Transferring monotonicity in weighted norm inequalities},
   journal={Collect. Math.},
   volume={54},
   date={2003},
   number={2},
   pages={181--216},
   issn={0010-0757},
%   review={\MR{1995140}},
}
\bib{Sft}{article}{
   author={Sinnamon, Gord},
   title={The Fourier transform in weighted Lorentz spaces},
   journal={Publ. Mat.},
   volume={47},
   date={2003},
   number={1},
   pages={3--29},
   issn={0214-1493},
%   review={\MR{1970892}},
%   doi={10.5565/PUBLMAT\{_}47103\{_}01},
}
\bib{Snl}{article}{
   author={Sinnamon, Gord},
   title={Fourier inequalities and a new Lorentz space},
   conference={
      title={Banach and function spaces II},
   },
   book={
      publisher={Yokohama Publ., Yokohama},
   },
   isbn={978-4-946552-29-8},
   date={2008},
   pages={145--155},
%   review={\MR{2428598}},
}
\bib{Snf}{article}{
   author={Sinnamon, Gord},
   title={Hardy inequalities in normal form},
   journal={Trans. Amer. Math. Soc.},
   volume={375},
   date={2022},
   number={2},
   pages={961--995},
   issn={0002-9947},
%  review={\MR{4369241}},
   doi={10.1090/tran/8552},
}
\bib{Sp}{article}{
   author={Sparr, Annika},
   title={On the conjugate space of the Lorentz space $L(\phi,q)$},
   conference={
      title={Interpolation theory and applications},
   },
   book={
      series={Contemp. Math.},
      volume={445},
      publisher={Amer. Math. Soc., Providence, RI},
   },
   isbn={978-0-8218-4207-2},
   date={2007},
   pages={313--336},
%   review={\MR{2381903}},
   doi={10.1090/conm/445/08610},
}
\bib{Z}{book}{
   author={Zaanen, Adriaan Cornelis},
   title={Integration},
   note={Completely revised edition of An introduction to the theory of
   integration},
   publisher={North-Holland Publishing Co., Amsterdam; Interscience
   Publishers John Wiley \& Sons, Inc., New York},
   date={1967},
   pages={xiii+604},
%   review={\MR{0222234}},
}
\end{biblist}
\end{bibdiv}
\end{document}